\newtheorem{Theorem}{Theorem}
\newtheorem{Lemma}[Theorem]{Lemma}%
\newtheorem{Definition}[Theorem]{Definition}%
\newcommand{\be}{\begin{equation}}
	\newcommand{\ee}{\end{equation}}
\newcommand{\ba}{\begin{array}}
	\newcommand{\ea}{\end{array}}
\newcommand{\bea}{\begin{eqnarray}}
	\newcommand{\eea}{\end{eqnarray}}
\def\dfrac{\displaystyle\frac}
\def\dmin{\displaystyle\min}
\def\dfrac{\displaystyle\frac}
\def\fa{\forall}
\def\a{\alpha}
\def\d{\delta}
\def\D{\Delta}
\def\e{\epsilon}
\def\g{\gamma}
\def\l{\lambda}
\def\Om{\Omega}
\def\na{\nabla}
\def\m{\mathcal}
\def\st{{\rm s.t.}}
\begin{document}
	
	\title[Globally convergent SQP with least constraint violation for NSDP]{A globally convergent SQP-type method with least constraint violation for nonlinear semidefinite programming}
	
	
	\author*[1]{\fnm{Wenhao} \sur{Fu}}\email{wenhfu@usts.edu.cn}
	
	\author[2]{\fnm{Zhongwen} \sur{Chen}}\email{zwchen@suda.edu.cn}
	
	\affil*[1]{\orgdiv{School of Mathematical Sciences}, \orgname{Suzhou University of Science and Technology}, \orgaddress{\city{Suzhou}, \postcode{215009}, \state{Jiangsu}, \country{China}}}
	
	\affil[2]{\orgdiv{School of Mathematical Sciences}, \orgname{Soochow University}, \orgaddress{\city{Suzhou}, \postcode{215009}, \state{Jiangsu}, \country{China}}}
	
	
	\abstract{We present a globally convergent SQP-type method with the least constraint violation for nonlinear semidefinite programming. The proposed algorithm employs a two-phase strategy coupled with a line search technique. In the first phase, a subproblem based on a local model of infeasibility is formulated to determine a corrective step. In the second phase, a search direction that moves toward optimality is computed by minimizing a local model of the objective function. Importantly, regardless of the feasibility of the original problem, the iterative sequence generated by our proposed method converges to a Fritz-John point of a transformed problem, wherein the constraint violation is minimized. Numerical experiments have been conducted on various complex scenarios to demonstrate the effectiveness of our approach.}
	
	\keywords{Nonlinear semidefinite programming, SQP-type method, Least constraint violation, Global convergence, Two-phase strategy}
	
	
	\pacs[MSC Classification]{90C22, 90C30}
	
	\maketitle
	
	\section{Introduction}
	\label{sect1}
	In this paper, we address the nonlinear semidefinite programming (NSDP) problem with equality constraints formulated as follows:
	\be
	\label{eq1.1}
	\ba{cl}
	\dmin_{x\in\m{R}^n} & f(x)\\
	{\rm s.t.}&h(x)=0,\\
	&G(x)\preceq0,
	\ea
	\ee
	where, $f:\m{R}^n\to\m{R}$, $h:\m{R}^n\to\m{R}^l$ and $G:\m{R}^n\to\m{S}^m$ are twice continuously differentiable functions, $\m{S}^m$ represents the space of $m\times m$ real symmetric matrices. The constraint $G(x)\preceq0$ indicates that $G(x)$ is a negative semidefinite matrix. We use $\m{S}_{--}^m$ and $\m{S}_-^m$ to denote the sets of negative definite and negative semidefinite symmetric matrices of dimension $m\times m$, respectively. Therefore, $G(x)\prec0$ $(G(x)\preceq0)$ is equal to $G(x)\in\m{S}_{--}^m$ $(G(x)\in\m{S}_-^m)$. The set $\m{S}_{++}^m$ and $\m{S}_+^m$ are defined similarly.
	
	In the past two decades, extensive research has been conducted in the field of NSDP, resulting in significant theoretical advancements (\cite{Forsgren00,Shapiro97,Sun06,Zhang20}) as well as practical algorithm developments (\cite{Mosheyev00,SunJ06,Wu13,Tseng02,Yamashita12,Yamashita20,Correa04,Gomez10,Zhao20}). NSDP has found wide applications in control, finance, eigenvalue problems, structural optimization, and other areas (\cite{Wolkowicz12,Apkarian03,Apkarian04,Kocvara04,Leibfritz06,Mostafa05,Tuan00,Weldeyesus15}).
	
	Many sequential quadratic programming (SQP) algorithms have been proposed to solve NSDP problems by extending the classical SQP-type method to $\m{S}_-^m$ cone. These globally convergent SQP-type algorithms typically find a Karush-Kuhn-Tucker (KKT) point of \eqref{eq1.1} or a feasible point where a certain constraint qualification fails. Various methods have been promoted in this regard, including the works of \cite{Gomez10,Chen15,Zhao16}.
	
	In the early 2000s, there was a growing interest in infeasibility detection for nonlinear programming, as infeasible problems often arise in practice. These problems can be caused by parameter mismatches or infeasible subproblems, such as in branch-and-bound methods. While existing SQP-type methods perform well in solving NSDP problems, it is worth noting that optimization models that simulate practical applications are only sometimes feasible. Therefore, when encountering an infeasible problem, it becomes necessary to identify the infeasible constraints while minimizing the objective function. It is important to study NSDP algorithms in more depth, exploring the possibility of taking a step forward instead of stopping at a stationary point with some measure of constraint violation. Recent work by Dai and Zhang (\cite{Dai22}) has proposed results for finding minimizers of the objective function with the least constraint violation. However, to the best of authors' knowledge, there are only a few publications available that discuss the infeasibility detection for NSDP and the relationship between KKT points and infeasible stationary points.
	
	Most studies on SQP-type methods for NSDP have primarily focused on finding KKT points. If there is no feasible point for \eqref{eq1.1}, some of the presented algorithms may terminate at an infeasible stationary point with some measure of constraint violation. Examples of such studies include \cite{Zhao16,Li19,Zhao20,Yamakawa22}. Existing literature on termination criteria for infeasible stationary points can be divided into two main categories: one describes the infeasibility of constraints, and the other deals with stabilization property for some measure of constraint violation. However, a key limitation of previous research is the need to address the relationship between the infeasible stationary point and the objective function. Therefore, several interesting and relevant problems remain to be addressed, including: 1) establishing a relationship between the infeasible stationary point and the objective function; 2) proposing a method that moves further instead of stopping at an infeasible stationary point; and 3) designing a globally convergent algorithm that converges to a Fritz-John (FJ) point instead of a feasible point when Robinson's constraint qualification fails, without relying on information from the objective function. This work aims to address these issues by seeking an algorithm with global convergence and finding points that minimize the objective function with the least constraint violation.
	
	In this paper, we present a novel SQP-type method to solving problem \eqref{eq1.1}. Our method addresses the limitations of existing techniques and offers several advantages. Specifically, it incorporates two sequences of subproblems to generate iterative solutions, ensuring global convergence to certain Fritz-John points without relying on any constraint qualifications.
	
	Key features of our proposed method include:
	\begin{itemize}
		\item Solvability of subproblems: Our approach guarantees that the subproblems are always solvable, eliminating the need for a restoration phase.
		
		\item Relationship between FJ points and infeasible stationary points: We establish a clear relationship between them, shedding light on their interplay in the optimization process.
		
		\item Improved termination criterion: We enhance the termination criterion for identifying infeasible stationary points, further refining the convergence properties of the method.
	\end{itemize}
	Overall, our technique can be viewed as a variant of SQP-type methods, offering distinct advantages over existing approaches. These contributions serve to advance the field of SQP and provide a valuable tool for solving the problem at hand.
	
	In the case where \eqref{eq1.1} is infeasible, we present our findings on the relationship between the infeasible stationary point and a related shifted problem, denoted as \eqref{eq1.2}. Analogous to the approach undertaken in a prior study by Dai and Zhang (\cite{Dai21}), the shifted problem presented in this study is formulated as follows:
	\be
	\label{eq1.2}
	\ba{ll}
	\dmin_{x\in\m{R}^n} & f(x)\\
	\st & h(x)=r-s,\\
	& G(x)\preceq tI_m,
	\ea
	\ee
	where $(r,s,t)\in\m{R}^l\times\m{R}^l\times\m{R}$ represents the least shift that locally solves a problem with respect to an infeasible stationary point $x$. More precisely, the following optimization problem, denoted as \eqref{eq2.2}, finds this least shift:
	\be
	\label{eq2.2}
	\ba{cl}
	\dmin_{x,r,s,t} & e_l^T(r+s)+t\\
	\st & h(x)=r-s, \\
	& G(x)\preceq tI_m, \\
	& r\ge0, \quad s\ge0, \quad t\ge0,
	\ea
	\ee
	where $e_l=(1,1,\cdots,1)$ represents a vector in $\m{R}^l$. The least shift characterizes a form of constraint violation and is similar to the definition given in \cite{Zhao20}. Consequently, we refer to problem \eqref{eq1.2} as an NSDP problem with the least constraint violation.
	
	Our proposed algorithm generates an infinite sequence of iterations $\{x_k\}$, with the accumulation point $x^*$ representing an FJ point of problem \eqref{eq1.2}. Unlike other SQP-type methods, the accumulation point $(r^*,s^*,t^*)$ of another sequence of iterations $\{(r_k,s_k,t_k)\}$ satisfies one of the following conditions:
	\begin{itemize}
		\item $(r^*,s^*,t^*)=0$, in which case $x^*$ corresponds to an FJ point of \eqref{eq1.1}.
		\item $(r^*,s^*,t^*)\ne0$, indicating that $x^*$ represents an infeasible stationary point of \eqref{eq1.1}.
	\end{itemize}
	
	The remaining part of this paper is structured as follows. In Section \ref{sect2}, preliminaries for this article are presented. The detailed algorithm is outlined in Section \ref{sect3}. Following this, Section \ref{sect4} undertakes a rigorous analysis of the algorithm?s well-definedness. Section \ref{sect5} discusses global convergence. Section \ref{sect6} empirically demonstrates the effectiveness of the algorithm via numerical experiments. The conclusion is reported in Section \ref{sect7}.
	
	\section{Preliminaries}
	\label{sect2}
	
	Throughout this article, we use simplified notations for the function values, such as $f_k:=f(x_k)$, $h_k:=h(x_k)$, $G_k:=G(x_k)$, $g_k:=g(x_k)$, where $g(x)=\na f(x)$ represents the gradient of $f(x)$. We define $Dh(x)$ as the $l\times n$ Jacobian matrix of $h(x)$, i.e., $Dh(x)^T=\na h(x)=(\na h_1(x),\cdots,\na h_l(x))$, where the superscript $ ^T$ stands for the transpose of a vector or a matrix. Given $x$, a linear operator $DG(x)$ is defined as
	\[
	DG(x)d:=\sum_{i=1}^n\dfrac{\partial G(x)}{\partial x_i}d_i, \quad \fa d\in\m{R}^n,
	\]
	where $x_i$ refers to the $i$th element of the vector $x$. The adjoint operator of $DG(x)$ is defined as:
	\[
	DG(x)^*Y:=\left(\left\langle\dfrac{\partial G(x)}{\partial x_1},Y\right\rangle,\cdots,\left\langle\dfrac{\partial G(x)}{\partial x_n},Y\right\rangle\right)^T, \quad \fa Y\in\m{S}^m,
	\]
	where $\langle A,B\rangle:=\text{tr}(A^TB)$ denotes the inner product of $A, B\in\m{S}^m$, and $\text{tr}(\cdot)$ represents the trace of a square matrix. We define $[a]_+:=\max\{0,a\}$ and $[a]_-:=\min\{0,a\}$. The Euclidean norm is denoted as $\|\cdot\|$ . More exactly, we use $\|\cdot\|_1$, $\|\cdot\|_2$ and $\|\cdot\|_\infty$ to represent the $\ell_1$-norm, $\ell_2$-norm and $\ell_\infty$-norm, respectively. For a given matrix $A\in\m{S}^m$, $\|\cdot\|_F$ refers to the Frobenius norm defined as $\|A\|_F:=\sqrt{\langle A,A\rangle}$. Let $\l_i(A)$ denote the $i$th eigenvalue in nonincreasing order. $\Pi_{\m{S}_-^m}(A)$ represents the orthogonal projection of $A$ onto $\m{S}_-^m$, which is defined as follows:
	\[
	\Pi_{\m{S}_-^m}(A)=P\text{diag}([\l_1(A)]_-,\cdots,[\l_m(A)]_-)P^T,
	\]
	where $P$ is the orthogonal matrix in the orthogonal decomposition
	\[
	A=P\text{diag}(\l_1(A),\cdots,\l_m(A))P^T.
	\]
	For vectors $a$ and $b$, their component-wise product is denoted as $a\circ b$, which yields a vector with entries $(a\circ b)_i=a_ib_i$.
	
	To derive the first-order optimality conditions for problem \eqref{eq1.1}, we introduce the Fritz-John (FJ) function $F:\m{R}^n\times\m{R}\times\m{R}^l\times\m{S}^m\to\m{R}$ for \eqref{eq1.1}, defined as
	\[
	F(x,\rho,\mu,Y):=\rho f(x)+\mu^Th(x)+\langle Y,G(x)\rangle,
	\]
	where $\rho\in\m{R}_+$ represents an objective multiplier, and $(\mu,Y)\in\m{R}^l\times\m{S}^m$ are multipliers associated with equality constraints and semidefinite constraints, respectively. The first-order optimality point of \eqref{eq1.2} satisfies the following conditions:
	\[
	\ba{c}
	\rho g(x)+Dh(x)^T\mu+DG(x)^*Y=0,\\
	h(x)=r-s,\\
	G(x)\preceq tI_m,\\
	\langle Y,G(x)-tI_m\rangle=0, \quad Y\succeq0.
	\ea
	\]
	Specifically, when $(r,s,t)=0$, the point $x$ also stands as a first-order optimality point of \eqref{eq1.1}.
	
	We define the measure of constraint violation $v(x)$ as
	\[
	v(x):=\|h(x)\|_1+[\l_1(G(x))]_+
	\]
	and we also denote $v_k:=v(x_k)$ for convenient. The infeasible stationary point, often mentioned, can be considered as a solution to the following least constraint violation problem
	\be
	\label{eq2.1}
	\ba{cl}
	\dmin_{x\in\m{R}^n} & v(x).
	\ea
	\ee
	It is important to note that such an infeasible stationary point is independent of the objective function $f(x)$. By introducing slack variables $(r,s,t)\in\m{R}^l\times\m{R}^l\times\m{R}$, the nonsmooth and nonconvex problem \eqref{eq2.1} can be equivalently transformed into the smooth and feasible problem \eqref{eq2.2}. Let $l(x,r,s,t,\mu,Y,\nu_r,\nu_s,\nu_t)$ be the Lagrangian function of \eqref{eq2.2} given by
	\begin{align*}
		&l(x,r,s,t,\mu,Y,\nu_r,\nu_s,\nu_t)\\
		=&e_l^T(r+s)+t+\mu^T(h(x)-r+s)+\langle Y,G(x)-tI_m\rangle-\nu_r^Tr-\nu_s^Ts-\nu_tt
	\end{align*}
	where  $(\nu_r,\nu_s,\nu_t)\in\mathcal{R}^l\times\mathcal{R}^l\times\mathcal{R}$ are the Lagrange multipliers associated with the inequality constraints. Under certain constraint qualification (CQ) conditions such as Robinson's CQ, the first-order optimality point of \eqref{eq2.2} satisfies that
	\[
	\ba{l}
	Dh(x)^T\mu+DG(x)^*Y=0,\\
	e_l-\mu-\nu_r=0,\quad
	e_l+\mu-\nu_s=0,\quad
	1-\langle Y,I_m\rangle-\nu_t=0,\\
	h(x)=r-s, \quad G(x)\preceq tI_m, \quad
	r\ge0, \quad s\ge0, \quad t\ge0,\\
	\langle Y,G(x)-tI_m\rangle=0, \quad
	\nu_r^Tr=0, \quad \nu_s^Ts=0, \quad \nu_t^Tt=0,\\
	Y\succeq0, \quad \nu_r\ge0, \quad \nu_s\ge0, \quad \nu_t\ge0.
	\ea
	\] 
	By utilizing the componentwise product ``$\circ$'' and setting $h(x)=[h(x)]_++[h(x)]_-$, the first-order optimality conditions for \eqref{eq2.2} can be further expressed as
	\be
	\label{eq2.3}
	\ba{l}
	\na_xF(x,0,\mu,Y)=Dh(x)^T\mu+DG(x)^*Y=0,\\
	(e_l-\mu)\circ[h(x)]_+=0, \quad (e_l+\mu)\circ[h(x)]_-=0,\\
	(1-\text{tr}(Y))[\l_1(G(x))]_+=0, \quad \langle Y,G(x)-[\l_1(G(x))]_+I_m\rangle=0,\\
	-e_l\le\mu\le e_l, \quad Y\succeq0, \quad \text{tr}(Y)\le1.
	\ea
	\ee
	Since problem \eqref{eq2.2} is an equivalent smooth formulation of \eqref{eq2.1}, we refer to \eqref{eq2.3} as a first-order optimality condition for \eqref{eq2.1} as well.
	
	In order to establish the global convergence of the algorithm proposed in the subsequent section, we consider the $\ell_1$ exact penalty function given by
	\be
	\label{eq2.4}
	P^\rho(x):=\rho f(x)+v(x).
	\ee
	In this paper, $\rho>0$ serves as both the penalty parameter and the objective multiplier. We define the linearization of $v(x)$ at the iterate point $x_k$ along a direction $d\in\m{R}^n$ as
	\[
	l_k^v(d):=\|h_k+Dh(x_k)d\|_1+[\l_1(G_k+DG(x_k)d)]_+
	\]
	and define the linearization of $P^\rho(x+d)$ at $x_k$ along $d$ as
	\[
	l_k^\rho(d):=\rho l_k^f(d)+l_k^v(d), \quad l_k^f(d):=f_k+g_k^Td.
	\]
	To quantify the linearized improvement in terms of $l_k^\rho(d)$, $l_k^v(d)$ and $l_k^f(d)$, respectively, with respect to $d$ compared to a zero step, we define
	\[
	\D l_k^\rho(d):=l_k^\rho(0)-l_k^\rho(d), \ \D l_k^v(d):=l_k^v(0)-l_k^v(d), \ \D l_k^f(d):=l_k^f(0)-l_k^f(d)=-g_k^Td.
	\]
	
	\section{Description of algorithm}
	\label{sect3}
	
	Now, we proceed to describe the detailed algorithm. Starting from the current iterate point $x_k$ with a positive definite matrix $B_k^{fea}\in\m{S}^n$, our first step is to compute a feasible direction by solving the following nonsmooth optimization problem:
	\be
	\label{eq3.1}
	\ba{cl}
	\dmin_{d\in\m{R}^n} & l_k^v(d)+\dfrac{1}{2}d^TB_k^{fea}d.
	\ea
	\ee
	To reformulate the above problem, we introduce slack variable $(r,s,t)\in\m{R}^l\times\m{R}^l\times\m{R}$, leading to the following equivalent quadratic semidefinite programming problem:
	\be
	\label{eq3.2}
	\ba{cl}
	\dmin_{d,r,s,t} & e_l^T(r+s)+t+\dfrac{1}{2}d^TB_k^{fea}d\\
	{\rm s.t.}  & h(x_k)+Dh(x_k)d=r-s,\\
	& G(x_k)+DG(x_k)d\preceq tI_m,\\
	& r\ge0, \quad s\ge0, \quad t\ge0.
	\ea
	\ee
	Let $(d_k^{fea},r_k,s_k,t_k)$ be the solution obtained. It is easy to confirm that $d_k^{fea}$ is unique and that $l_k^v(d_k^{fea})=e_l^T(r_k+s_k)+t_k$. Set $(\bar\mu_{k+1},\bar Y_{k+1})\in\m{R}^l\times\m{S}^m$ as the Lagrange multipliers associated with the equality and semidefinite constraint in \eqref{eq3.2}. We define the feasibility residual $R_k^{fea}:=R^{fea}(x_k,\bar\mu_{k+1},\bar Y_{k+1})$, where
	\begin{align*}
		R^{fea}(x,\mu,Y)
		&=\|\na_xF(x,0,\mu,Y)\|_{\infty}\\
		&\quad +\|(e_l-\mu)\circ[h(x)]_+\|_\infty+\|(e_l+\mu)\circ[h(x)]_-\|_\infty\\
		&\quad +\lvert1-\text{tr}(Y)\rvert[\l_1(G(x))]_++\|Y(G(x)-[\l_1(G(x))]_+I_m)\|_F.
	\end{align*}
	One can see that the last condition of \eqref{eq2.3} is always satisfied for $(\bar\mu_{k+1},\bar Y_{k+1})$ since they are Lagrange multipliers. If $R^{fea}(x,\mu,Y)=0$, then $(x,\mu,Y)$ satisfies the first-order optimality condition \eqref{eq2.3}, implying that $x$ is a stationary point of \eqref{eq2.1}.
	
	Given some positive definite matrix $B_k\in\m{S}^n$ which is normally different from $B_k^{fea}$ in \eqref{eq3.1}, we compute $d_k$ as a search direction by solving a quadratic semidefinite programming problem
	\be
	\label{eq3.3}
	\ba{ll}
	\dmin_{d}& \rho_kg_k^Td+\dfrac{1}{2}d^TB_kd\\
	{\rm s.t.} & h_k+Dh(x_k)d=r_k-s_k,\\
	& G_k+DG(x_k)d\preceq t_kI_m.
	\ea
	\ee
	Set $(\hat\mu_{k+1},\hat Y_{k+1})\in\m{R}^l\times\m{S}^m$ as the Lagrange multipliers for the equality constraints and semidefinite constraint in \eqref{eq3.3}. Denote the optimality residual by $R_k^{opt}:=R^{opt}(x_k,\rho_k,\hat\mu_{k+1},\hat Y_{k+1})$, where
	\[
	R^{opt}(x,\rho,\mu,Y)=\|\na_xF(x,\rho,\mu,Y)\|_\infty+\|YG(x)\|_F.
	\]
	If $v(x)=0$ and $R^{opt}(x,\rho,\mu,Y)=0$, then $x$ is a stationary point of \eqref{eq1.1}.
	
	Let us direct our attention towards the formulation of our line search methodology. To this end, we employ the $\ell_1$ exact penalty
	function denoted as $P^\rho(x)$ in equation \eqref{eq2.4}, with $\rho=\rho_k$ signifying the parameter's reliance on the $k$-th iteration step. The consequent reduction within the linear model of
	$P^{\rho_k}(x)$,  induced by the selected search direction $d_k$, takes the form of
	\[
	\D l_k^{\rho_k}(d_k)=-\rho_kg_k^Td_k+\D l_k^v(d_k).
	\]
	Considering the condition
	\be
	\label{eq3.4}
	\rho_k(\|\bar\mu_{k+1}\|_\infty+\text{tr}(\bar Y_{k+1}))>1 \quad \text{or} \quad
	\rho_k(\|\hat\mu_{k+1}\|_\infty+\text{tr}(\hat Y_{k+1}))>1,
	\ee
	we proceed to update the parameter $\rho_k$ to $\rho_{k+1}$ through that
	\be
	\label{eq3.5}
	\rho'_k=\left\{\ba{ll}
	\min\left\{\delta\rho_k,\ \dfrac{1-\e}{\|\bar\mu_{k+1}\|_\infty+\text{tr}(\bar
		Y_{k+1})+\|\hat\mu_{k+1}\|_\infty+\text{tr}(\hat Y_{k+1})}\right\}, & \text{if \eqref{eq3.4}
		holds,}\\
	\rho_k, & \text{otherwise,}
	\ea\right.
	\ee
	and
	\be
	\label{eq3.6}
	\rho_{k+1}=\left\{\ba{ll}
	\min\{\delta\rho'_k,\ \zeta_k\}, & \text{if $\D l_k^{\rho'_k}(d_k)<\e\D l_k^v(d_k)$,}\\
	\rho'_k, & \text{if $\D l_k^{\rho'_k}(d_k)\ge\e\D l_k^v(d_k)$,}
	\ea\right.
	\ee
	where $\delta\in(0,1)$ and $\e\in(0,1)$ are constants, and
	\[
	\zeta_k:=\dfrac{(1-\e)\D l_k^v(d_k)}{g_k^Td_k+0.5d_k^TB_kd_k}.
	\]
	It is worth noting that the protocol for updating  $\rho_k$ presented here extends the approach outlined in \cite{Burke14} with minor refinements. Upon the successful update of $\rho_{k+1}$, the subsequent step entails an Armijo line search executed along the direction $d_k$. In more precise terms, let $\a:=\a_k$ represent the first member of the sequence $\{1,\g,\g^2,\cdots\}$, $0<\g<1$, satisfying the condition
	\be
	\label{eq3.7}
	P^{\rho_{k+1}}(x_k+\a d_k)-P^{\rho_{k+1}}(x_k)\le-\eta\a\D l_k^{\rho_{k+1}}(d_k), \quad
	\eta\in(0,1).
	\ee
	Finally, upon setting $x_{k+1}=x_k+\a_kd_k$,  the algorithm advances to the ensuing iteration.
	
	With the groundwork thus meticulously established, we are now ready to outline the algorithmic framework that systematically addresses the NSDP problem \eqref{eq1.1}.
	
	\begin{algorithm}[H]
		\caption{Sequential quadratic programming with the least constraint violation}
		\label{alg3.1}
		\begin{algorithmic}
			\State {\bf Initialization:} Given $x_0\in\m{R}^n, \ B_0^f\in\m{S}_{++}^n, \ B_0\in\m{S}_{++}^n, 0<\e,\d,\eta,\gamma<1, \ \rho_0>0, \ k=0, \ nmax>0$.
			\While {$k\le nmax$}
			\State Solve \eqref{eq3.2} to get $(d_k^{fea},r_k,s_k,t_k)$, let $(\bar\mu_{k+1},\bar
			Y_{k+1})$ be its corresponding Lagrange multipliers.
			\State Solve \eqref{eq3.3} to get $d_k$, let $(\hat\mu_{k+1},\hat Y_{k+1})$ be its
			corresponding Lagrange multipliers.
			\If{$d_k=0$}
			\If{$v_k=0$}
			\State $x_k$ is a KKT point for \eqref{eq1.1}. {\bf Break}.
			\Else
			\State $x_k$ is an infeasible stationary point for \eqref{eq1.1}.
			\State $x_k$ is a KKT point for problem \eqref{eq1.2}. {\bf Break}.
			\EndIf
			\EndIf
			\State Update $\rho_k$ to $\rho_{k+1}$ by  \eqref{eq3.5} and \eqref{eq3.6}.
			\State Let $\a_{k,0}=1$, $i:=0$.
			\While {$i\ge0$}
			\If {\eqref{eq3.7} holds for $\a_{k,i}$}
			\State $\a_k=\a_{k,i}$, \ $x_{k+1}=x_k+\a_kd_k$. {\bf Break}.
			\Else
			\State $\a_{k,i+1}=\gamma\a_{k,i}$, \ $i:=i+1$.
			\EndIf
			\EndWhile \quad (for $i$)
			\State Update $(B_k^{fea},B_k)$ to $(B_{k+1}^{f},B_{k+1})$, $k:=k+1$.
			\EndWhile \quad (for $k$)
		\end{algorithmic}
	\end{algorithm}
	
	\section{Well-definedness}
	\label{sect4}
	
	Henceforth, shall delve into the analysis of the well-definedness of Algorithm \ref{alg3.1}. In doing so, we rely on the following assumptions regarding the sequence of iterates $\{x_k\}$, and the matrices $\{(B_k^{fea},B_k)\}$ generated by the algorithm.
	
	{\noindent\rm \bf Assumption A}\\
	(A1) $f(x)$, $h(x)$ and $G(x)$ are twice continuously differentiable.\\
	(A2) There exists a convex and compact set $\Om\subseteq\m{R}^n$ such that $x_k\in\Om$ for all
	$k$.\\
	(A3) The matrix sequence $\{B_k^{fea}\}$ and $\{B_k\}$ are uniformly positive definite and bounded
	above for all $k$, i.e., there exist two constants $0<b_1<b_2$ such that, for all $k$ and
	$d\in\m{R}^n$,
	$$
	b_1\|d\|^2\leq d^TB_k^{fea}d\leq b_2\|d\|^2, \quad
	b_1\|d\|^2\leq d^TB_kd\leq b_2\|d\|^2.
	$$
	
	Several preliminary results are essential for the underpinning of our methodology. The subsequent lemma provides insights into the reductions of both $l_k^v(d)$ and $l_k^\rho(d)$.
	
	\begin{Lemma}
		\label{lem4.1}
		The reductions of $l_k^v(\cdot)$ and $l_k^\rho(\cdot)$ from $0$ to $d$ satisfy
		\be
		\label{eq4.1}
		v'(x_k;d)\le-\D l_k^v(d), \quad (P^\rho)'(x_k;d)\le-\D l_k^\rho(d),
		\ee
		where $v'(x;d)$ and $(P^\rho)'(x;d)$ represent the directional derivatives of $v(\cdot)$ and $P^\rho(\cdot)$ at $x_k$ along a direction $d\in\m{R}^n$, respectively.
	\end{Lemma}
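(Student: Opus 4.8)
The plan is to derive both inequalities from one observation: along any ray $\tau\mapsto x_k+\tau d$ the true violation $v$ agrees to first order with its linearization $l_k^v$, and $l_k^v$ is convex in $d$. To see the convexity, note that $\|h_k+Dh(x_k)d\|_1$ is a norm of an affine function of $d$, while $[\l_1(G_k+DG(x_k)d)]_+$ is the composition of the affine operator $d\mapsto G_k+DG(x_k)d$ with $M\mapsto[\l_1(M)]_+$, and the latter is convex because the largest eigenvalue is convex on $\m{S}^m$ and $[\cdot]_+$ is convex and nondecreasing; hence $l_k^v$ is convex, with $l_k^v(0)=v(x_k)$. Fixing $d$ and setting $\phi(\tau):=l_k^v(\tau d)$, the map $\phi$ is convex on $[0,\infty)$ with $\phi(0)=v(x_k)$ and $\phi(1)=l_k^v(d)$, so by the slope inequality for convex functions the right derivative $\phi'_+(0)$ exists and satisfies $\phi'_+(0)\le\phi(1)-\phi(0)=-\D l_k^v(d)$.

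Next I would show $v'(x_k;d)=\phi'_+(0)$. Since $h$ and $G$ are $C^1$, one has $h(x_k+\tau d)=h_k+\tau Dh(x_k)d+o(\tau)$ and $G(x_k+\tau d)=G_k+\tau DG(x_k)d+o(\tau)$ as $\tau\downarrow0$. Because $\|\cdot\|_1$ is Lipschitz and $M\mapsto[\l_1(M)]_+$ is Lipschitz in the Frobenius norm (the largest eigenvalue is $1$-Lipschitz in the spectral norm and $[\cdot]_+$ is $1$-Lipschitz), these expansions pass through the outer maps, giving $v(x_k+\tau d)=\phi(\tau)+o(\tau)$. Dividing by $\tau$ and letting $\tau\downarrow0$ yields $v'(x_k;d)=\phi'_+(0)\le-\D l_k^v(d)$, which is the first inequality.

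For the second inequality I would use that $f$ is $C^1$, so $(P^\rho)'(x_k;d)=\rho g_k^Td+v'(x_k;d)\le\rho g_k^Td-\D l_k^v(d)$, and then verify from the definitions that $\D l_k^\rho(d)=l_k^\rho(0)-l_k^\rho(d)=-\rho g_k^Td+\D l_k^v(d)$, whence $\rho g_k^Td-\D l_k^v(d)=-\D l_k^\rho(d)$ and the claim follows. The only delicate point is the identity $v(x_k+\tau d)=\phi(\tau)+o(\tau)$, i.e., that the nonsmooth composite $v$ inherits the directional derivative of its linearization; this is exactly where the Lipschitz continuity of $\|\cdot\|_1$ and $[\l_1(\cdot)]_+$ does the work, while the remaining manipulations are convexity bookkeeping and unwinding the definitions of $\D l_k^v$ and $\D l_k^\rho$.
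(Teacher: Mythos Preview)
Your proposal is correct and follows essentially the same route as the paper: both arguments establish convexity of $l_k^v$ via the composition $[\cdot]_+\circ\l_1$ being convex (monotone convex outer, convex inner), identify $v'(x_k;d)$ with the right derivative of $\tau\mapsto l_k^v(\tau d)$ at $0$, and then bound that derivative by the secant slope $l_k^v(d)-l_k^v(0)=-\D l_k^v(d)$ using convexity. Your treatment of the step $v(x_k+\tau d)=l_k^v(\tau d)+o(\tau)$ is in fact a bit more explicit than the paper's, since you invoke the Lipschitz continuity of $\|\cdot\|_1$ and $[\l_1(\cdot)]_+$ to justify passing the $o(\tau)$ remainder through the outer maps, whereas the paper simply writes the resulting identity.
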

	
	\begin{proof}
		By the definition of $v(x)$, $l_k^v(d)$, and Assumption A1,
		\begin{align*}
			v'(x_k;d)=&\lim_{t\to0^+}\frac{v(x_k+td)-v(x_k)}{t}\\
			=&\lim_{t\to0^+}\frac{\|h_k+tDh(x_k)d\|_1+[\lambda_1(G_k+tDG(x_k)d)]_++o(t)-v(x_k)}{t}\\
			=&\lim_{t\to0^+}\frac{\|h_k+tDh(x_k)d\|_1+[\lambda_1(G_k+tDG(x_k)d)]_+-l_k^v(0)}{t}+o(1)\\
			=&(l_k^v)'(0;d).
		\end{align*}
		Moreover, due to the combined convexity and monotonically non-decreasing nature of the function $[\cdot]_+$, along with the convexity of $\lambda_1(\cdot)$,  we establish the following inequality:
		\[
		[\lambda_1(\theta G_1+(1-\theta)G_2)]_+\le[\theta\lambda_1(G_1)+(1-\theta)\lambda_1(G_2)]_+\le\theta[\lambda_1(G_1)]_++(1-\theta)[\lambda_1(G_2)]_+,
		\]
		which follows that the mapping $[\lambda_1(\cdot)]_+:\m{S}^m\to\m{R}$ is a convex one. When coupled with the convexity of $\|\cdot\|_1$, it becomes evident that $l_k^v(d)$ exhibits convexity in relation to $d\in\m{R}^n$.  As a consequence, we deduce that
		\begin{align*}
			(l_k^v)'(0;d)=&\lim_{t\to0^+}\frac{l_k^v(td)-l_k^v(0)}{t}\\
			\le&\lim_{t\to0^+}\frac{(1-t)l_k^v(0)+tl_k^v(d)-l_k^v(0)}{t}\\
			=&-l_k^v(0)+l_k^v(d)\\
			=&-\D l_k^v(d).
		\end{align*}
		In conclusion, the remaining proof can be readily derived by considering $v'(x_k;d)=(l_k^v)'(0;d)$, $(P^\rho)'(x_k;d)=\rho g_k^Td+v'(x_k;d)$, and the notation $\D l_k^\rho(d)$, which is defined as $\D l_k^{\rho}(d)=-\rho g_k^Td+\D l_k^v(d)$ in Section \ref{sect2}.
	\end{proof}
	
	The following results in Lemma \ref{lem4.2} are widely recognized within the realm of nonlinear semidefinite programming, as evidenced by existing literature such as \cite{Zhao20}. While the fundamental proofs align with established understanding, it's worth noting that certain aspects of the detailed derivations may exhibit distinct nuances.
	
	\begin{Lemma}
		\label{lem4.2}
		We have the followings:
		
		(a) The subproblem \eqref{eq3.2} is both feasible and yields a unique solution component $d_k^{fea}$.
		
		(b) $\D l_k^v(d_k^{fea})\ge0$, where the equality holds if and only if $d_k^{fea}=0$.
		
		(c) $d_k^{fea}=0$ if and only if $x_k$ is a stationary point for $v(\cdot)$.
		
		(d) $d_k^{fea}=0$ if and only if $(x_k,\bar\mu_{k+1},\bar Y_{k+1})$ satisfies first-order optimality condition \eqref{eq2.3}.
	\end{Lemma}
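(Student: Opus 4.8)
My plan rests on two structural observations: problem \eqref{eq3.1} is an \emph{unconstrained} strictly convex minimization --- because $l_k^v(\cdot)$ is convex, as shown inside the proof of Lemma~\ref{lem4.1}, and $\tfrac12 d^TB_k^{fea}d$ is strictly convex under (A3) --- and \eqref{eq3.2} is its exact lifted reformulation, so the $d$-components of their solutions coincide and, for fixed $d$, \eqref{eq3.2} decouples in $(r,s,t)$. For part~(a), feasibility of \eqref{eq3.2} is witnessed by $d=0$, $r=[h_k]_+$, $s=-[h_k]_-$, $t=[\l_1(G_k)]_+$; existence of a minimizer of \eqref{eq3.1} follows from continuity and coercivity, since by (A3) and $l_k^v\ge0$ one has $l_k^v(d)+\tfrac12 d^TB_k^{fea}d\ge\tfrac{b_1}{2}\|d\|^2$, while uniqueness of the minimizing $d$ follows from strict convexity; matching with \eqref{eq3.2} then identifies $d_k^{fea}$ as this unique $d$-component and yields $r_k=[h_k+Dh(x_k)d_k^{fea}]_+$, $s_k=-[h_k+Dh(x_k)d_k^{fea}]_-$, $t_k=[\l_1(G_k+DG(x_k)d_k^{fea})]_+$.

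Parts~(b) and~(c) both exploit that $d=0$ is feasible for \eqref{eq3.1}. Optimality of $d_k^{fea}$ gives $l_k^v(d_k^{fea})+\tfrac12(d_k^{fea})^TB_k^{fea}d_k^{fea}\le l_k^v(0)$, that is $\D l_k^v(d_k^{fea})\ge\tfrac12(d_k^{fea})^TB_k^{fea}d_k^{fea}\ge0$; if the left side vanishes then (A3) forces $d_k^{fea}=0$, and conversely $d_k^{fea}=0$ gives $\D l_k^v(d_k^{fea})=0$, which proves~(b). For~(c), recall from the proof of Lemma~\ref{lem4.1} that $v'(x_k;d)=(l_k^v)'(0;d)$ with $l_k^v(\cdot)$ convex, so $x_k$ is a stationary point for $v(\cdot)$ exactly when $d=0$ minimizes $l_k^v(\cdot)$. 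If $d=0$ minimizes $l_k^v$, then $l_k^v(d)+\tfrac12 d^TB_k^{fea}d>l_k^v(0)$ for every $d\ne0$, so $d_k^{fea}=0$. Conversely, if $d_k^{fea}=0$ while $l_k^v(\bar d)<l_k^v(0)$ for some $\bar d$, convexity gives $l_k^v(\tau\bar d)\le l_k^v(0)-\tau\big(l_k^v(0)-l_k^v(\bar d)\big)$, hence $l_k^v(\tau\bar d)+\tfrac12\tau^2\bar d^TB_k^{fea}\bar d<l_k^v(0)$ for all sufficiently small $\tau>0$, contradicting optimality of $d=0$ in \eqref{eq3.1}; thus $x_k$ is stationary for $v(\cdot)$.

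For part~(d), I would write out the KKT system of the convex subproblem \eqref{eq3.2}, with multipliers $\mu$, $Y\succeq0$, $\nu_r,\nu_s,\nu_t\ge0$ for the equality, semidefinite, and nonnegativity constraints: stationarity reads $B_k^{fea}d+Dh(x_k)^T\mu+DG(x_k)^*Y=0$, $e_l-\mu-\nu_r=0$, $e_l+\mu-\nu_s=0$, $1-\text{tr}(Y)-\nu_t=0$, together with primal feasibility and the complementarities $\langle Y,\,G_k+DG(x_k)d-tI_m\rangle=0$ and $\nu_r^Tr=\nu_s^Ts=\nu_t t=0$. Owing to the slack variables $r,s,t$, Robinson's constraint qualification holds at every feasible point of \eqref{eq3.2}, so this system is necessary and sufficient for optimality. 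For the ``only if'' direction, substitute $d_k^{fea}=0$; then $r_k=[h_k]_+$, $s_k=-[h_k]_-$, $t_k=[\l_1(G_k)]_+$ by part~(a), and eliminating $\nu_r=e_l-\bar\mu_{k+1}$, $\nu_s=e_l+\bar\mu_{k+1}$, $\nu_t=1-\text{tr}(\bar Y_{k+1})$ turns the stationarity, complementarity and sign conditions, line by line, into \eqref{eq2.3}. For the ``if'' direction, assuming $(x_k,\bar\mu_{k+1},\bar Y_{k+1})$ satisfies \eqref{eq2.3}, I would exhibit $d=0$, $r=[h_k]_+$, $s=-[h_k]_-$, $t=[\l_1(G_k)]_+$ with multipliers $\bar\mu_{k+1}$, $\bar Y_{k+1}$, $e_l-\bar\mu_{k+1}$, $e_l+\bar\mu_{k+1}$, $1-\text{tr}(\bar Y_{k+1})$ and verify the KKT system above; convexity then makes this a global solution of \eqref{eq3.2}, and uniqueness from part~(a) yields $d_k^{fea}=0$.

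The main obstacle I anticipate is part~(d): correctly assembling the optimality system of the quadratic semidefinite program \eqref{eq3.2} --- in particular handling the semidefinite complementarity $Y\big(G_k+DG(x_k)d-tI_m\big)=0$ and deriving the closed forms of $r_k,s_k,t_k$ from the decoupled minimization --- and checking that every line matches \eqref{eq2.3}, while confirming that a constraint qualification holds so that KKT is necessary at the optimum. Parts~(a)--(c) are comparatively routine but still require care with coercivity, with the precise meaning of stationarity for the nonsmooth map $v(\cdot)$, and with the small-step convexity estimate.
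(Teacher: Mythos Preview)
Your proposal is correct and follows essentially the same route as the paper: strict convexity of \eqref{eq3.1} for (a), comparison with $d=0$ for (b), the identity $v'(x_k;d)=(l_k^v)'(0;d)$ for (c), and matching the KKT system of \eqref{eq3.2} with \eqref{eq2.3} for (d). Your treatment of (d) is in fact more explicit than the paper's (which simply records the optimality system \eqref{eq4.3} and declares the equivalence), and your converse in (c) via a small-step convexity estimate is a harmless variant of the paper's directional-derivative argument.
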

	
	\begin{proof}
		(a) It is clear that the point $(0,[h_k]_+,[h_k]_-,[\l_1(G_k)]_+)$ represents a feasible solution of \eqref{eq3.2}. Notably, the optimization problem \eqref{eq3.2} demonstrates equivalence to \eqref{eq3.1}, which itself constitutes a strictly convex programming problem. This inherent convexity ensures the uniqueness of the solution to \eqref{eq3.1}, consequently leading to the uniqueness of $d_k^{fea}$.
		
		(b) Given that $d_k^{fea}$ represents an optimal point, and recognizing that $d=0$ is a feasible solution of \eqref{eq3.1}, it follows that
		\[
		l_k^v(d_k^{fea})+\frac{1}{2}(d_k^{fea})^TB_k^{fea}d_k^{fea}\le l_k^v(0).
		\]
		Consequently, we deduce that
		\be
		\label{eq4.2}
		\D l_k^v(d_k^{fea})=l_k^v(0)-l_k^v(d_k^{fea})\ge\frac{1}{2}(d_k^{fea})^TB_k^{fea}d_k^{fea}\ge0.
		\ee
		On the one hand, if $d_k^{fea}=0$, it is apparent from the definition of $\D l_k^v(d)$ that $\D l_k^v(d_k^{fea})=l_k^v(0)-l_k^v(d_k^{fea})=0$. On the other hand, if the equality $\D l_k^v(d_k^{fea})=0$ holds, then it follows from \eqref{eq4.2} that $(d_k^{fea})^TB_k^{fea}d_k^{fea}=0$. By the positive
		definiteness of $B_k^{fea}$, it follows that $d_k^{fea}=0$.
		
		(c) It follows from the proof in Lemma \ref{lem4.1} that
		$v'(x_k;d)=(l_k^v)'(0;d)$. Therefore, $x_k$ is a stationary point for $v(\cdot)$ if and only if $0$
		is a stationary point for $l_k^v(\cdot)$, which is a global minimizer of $\min_dl_k^v(d)$.
		Thus, there are two scenarios to consider.
		
		Firstly, if $x_k$ indeed constitutes a stationary point for $v(\cdot)$, then $l_k^v(0)\le
		l_k^v(d_k^{fea})$. Using the relation established in \eqref{eq4.2}, we can observe that  $\D l_k^v(d_k^{fea})=l_k^v(0)-l_k^v(d_k^{fea})=0$. According to (b), it can be deduced that $d_k^{fea}=0$ holds.
		
		Conversely, the condition $\D l_k^v(d_k^{fea})=0$ implies, based on (b), that $d_k^{fea}=0$, and $0$ is a global
		minimizer of \eqref{eq3.2}. Hence, we have:
		\[
		0\le\left(l_k^v(d)+\frac{1}{2}d^TB_k^{fea}d\right)'(0;d)=(l_k^v)'(0;d)=v'(x_k;d), \quad \fa
		d\in\m{R}^n,
		\]
		leading to the conclusion that $x_k$ is a stationary point for $v(\cdot)$.
		
		(d) The proof can be established through \eqref{eq2.3} and the utilization of the first-order optimality condition of \eqref{eq3.2}, which can be expressed as follows:
		\be
		\label{eq4.3}
		\ba{c}
		B_k^{fea}d_k^{fea}+Dh(x_k)^T\bar\mu_{k+1}+DG(x_k)^*\bar Y_{k+1}=0,\\
		(e_l-\bar\mu_{k+1})\circ[h_k+Dh(x_k)d_k^{fea}]_+=0,\\
		(e_l+\bar\mu_{k+1})\circ[h_k+Dh(x_k)d_k^{fea}]_-=0,\\
		(1-\text{tr}(\bar Y_{k+1}))[\l_1(G_k+DG(x_k)d_k^{fea})]_+=0,\\
		\langle\bar Y_{k+1},G_k+DG(x_k)d_k^{fea}-[\l_1(G_k+DG(x_k)d_k^{fea})]_+I_m\rangle=0,\\
		-e_l\le\bar\mu_{k+1}\le e_l, \quad \bar Y_{k+1}\succeq0, \quad \text{tr}(\bar Y_{k+1})\le1.
		\ea
		\ee
		~
	\end{proof}
	
	The subsequent result presents a property concerning the search direction $d_k$ derived from problem \eqref{eq3.3}.
	
	\begin{Lemma}
		\label{lem4.3}
		If $\rho_k>0$ and $v_k=0$, then $(x_k,\hat\mu_{k+1}/\rho_k,\hat Y_{k+1}/\rho_k)$ is a KKT
		point for \eqref{eq1.1} if and only if $d_k=0$.
	\end{Lemma}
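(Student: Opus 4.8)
The plan is to use the hypothesis $v_k=0$ to put \eqref{eq3.3} into a clean form and then to compare, line by line, its first-order optimality system with the KKT system of \eqref{eq1.1}. First I would show that $v_k=0$ forces $(r_k,s_k,t_k)=0$: since $d=0$ is feasible for \eqref{eq3.1}, Lemma \ref{lem4.2}(b) gives $\Delta l_k^v(d_k^{fea})\ge0$, i.e. $l_k^v(d_k^{fea})\le l_k^v(0)=v_k=0$; because $l_k^v(\cdot)\ge0$ and $l_k^v(d_k^{fea})=e_l^T(r_k+s_k)+t_k$ with $r_k,s_k,t_k\ge0$, all of $r_k,s_k,t_k$ must vanish. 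Hence the constraints of \eqref{eq3.3} reduce to $h_k+Dh(x_k)d=0$ and $G_k+DG(x_k)d\preceq0$, and since $v_k=0$ also yields $h_k=0$ and $G_k\preceq0$, the point $d=0$ is feasible for \eqref{eq3.3}.

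Next I would write down the two first-order systems. For \eqref{eq3.3} at its solution $d_k$ with multipliers $(\hat\mu_{k+1},\hat Y_{k+1})$ these read
\[
\rho_kg_k+B_kd_k+Dh(x_k)^T\hat\mu_{k+1}+DG(x_k)^*\hat Y_{k+1}=0,\quad h_k+Dh(x_k)d_k=0,
\]
\[
G_k+DG(x_k)d_k\preceq0,\quad \langle\hat Y_{k+1},G_k+DG(x_k)d_k\rangle=0,\quad\hat Y_{k+1}\succeq0,
\]
and, since $B_k\succ0$ by Assumption A3, problem \eqref{eq3.3} is strictly convex, so these conditions are both necessary and sufficient for $d_k$ and $d_k$ is the unique minimizer. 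For \eqref{eq1.1} at $x_k$ with multipliers $(\hat\mu_{k+1}/\rho_k,\hat Y_{k+1}/\rho_k)$, multiplying the stationarity relation by $\rho_k>0$, the KKT system is $\rho_kg_k+Dh(x_k)^T\hat\mu_{k+1}+DG(x_k)^*\hat Y_{k+1}=0$, $h_k=0$, $G_k\preceq0$, $\langle\hat Y_{k+1},G_k\rangle=0$, $\hat Y_{k+1}\succeq0$ (the last two being equivalent to $\|\hat Y_{k+1}G_k\|_F=0$ by the standard argument that the product of two commuting-in-trace positive semidefinite matrices with zero trace vanishes).

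With both systems available the equivalence is immediate in both directions. If $d_k=0$, substituting into the system of \eqref{eq3.3} annihilates the term $B_kd_k$ and replaces $G_k+DG(x_k)d_k$ by $G_k$, so the system becomes verbatim the KKT system of \eqref{eq1.1}; thus $(x_k,\hat\mu_{k+1}/\rho_k,\hat Y_{k+1}/\rho_k)$ is a KKT point of \eqref{eq1.1}. Conversely, if that triple is a KKT point of \eqref{eq1.1}, I would check that $d=0$ together with $(\hat\mu_{k+1},\hat Y_{k+1})$ satisfies every line of the first-order system of \eqref{eq3.3} (stationarity because $B_k\cdot0=0$; primal feasibility and complementarity because $G_k+DG(x_k)\cdot0=G_k$ and $h_k=0$; dual feasibility being the same statement), and then invoke sufficiency of the KKT conditions for the convex problem \eqref{eq3.3} together with uniqueness of its minimizer to conclude $d_k=0$. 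I do not foresee a genuine obstacle here; the only points requiring care are the reduction $v_k=0\Rightarrow(r_k,s_k,t_k)=0$ that brings \eqref{eq3.3} into the clean form above, and the bookkeeping of the $\rho_k$-scaling relating the multipliers of \eqref{eq3.3} to those of \eqref{eq1.1}.
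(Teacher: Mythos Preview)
Your proposal is correct and follows essentially the same approach as the paper: reduce $(r_k,s_k,t_k)$ to zero from $v_k=0$, write the KKT system \eqref{eq4.4} of the strictly convex subproblem \eqref{eq3.3}, and observe that setting $d_k=0$ turns it into the KKT system of \eqref{eq1.1} (and conversely, use uniqueness of the minimizer of \eqref{eq3.3}). Your write-up is slightly more explicit than the paper's in justifying $(r_k,s_k,t_k)=0$ and in noting that $d=0$ is feasible for \eqref{eq3.3}, but the logical structure is identical.
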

	
	\begin{proof}
		In the case where $v_k=0$, it is evidence that $l_k^v(d_k^{fea})=0$. Consequently, this implies that $(r_k,s_k,t_k)=0$, leading to the fulfillment of the KKT condition for $(d_k,\hat\mu_{k+1},\hat Y_{k+1})$:
		\be
		\label{eq4.4}
		\ba{c}
		\rho_kg_k+B_kd_k+Dh(x_k)^T\hat\mu_{k+1}+DG(x_k)^*\hat Y_{k+1}=0,\\
		h_k+Dh(x_k)d_k=0,\\
		G_k+DG(x_k)d_k\preceq0,\\
		\langle\hat Y_{k+1},G_k+DG(x_k)d_k\rangle=0, \quad \hat Y_{k+1}\succeq0.
		\ea
		\ee
		Moreover, since $\rho_k>0$, the point $(x_k,\hat\mu_{k+1}/\rho_k,\hat Y_{k+1}/\rho_k)$ satisfies that
		\be
		\label{eq4.5}
		\ba{c}
		g_k+(B_k/\rho_k)d_k+Dh(x_k)^T(\hat\mu_{k+1}/\rho_k)+DG(x_k)^*(\hat Y_{k+1}/\rho_k)=0,\\
		h_k+Dh(x_k)d_k=0,\\
		G_k+DG(x_k)d_k\preceq0,\\
		\langle\hat Y_{k+1}/\rho_k,G_k+DG(x_k)d_k\rangle=0, \quad \hat Y_{k+1}/\rho_k\succeq0.
		\ea
		\ee
		On the one hand, if $d_k=0$, then \eqref{eq4.5} serves as the KKT condition for problem \eqref{eq1.1}.
		Conversely, if $(x_k,\hat\mu_{k+1}/\rho_k,\hat Y_{k+1}/\rho_k)$ is a KKT point for
		\eqref{eq1.1}, then $(0,\hat\mu_{k+1},\hat Y_{k+1}))$ satisfies the KKT condition
		\eqref{eq4.4}. Given that \eqref{eq3.3} represents a strictly convex programming problem, it can be deduced that $d_k=0$.
	\end{proof}
	
	The following lemma demonstrates that the line search procedure of Algorithm \ref{alg3.1} terminates after reducing the value for a finite number of times.
	
	\begin{Lemma}
		\label{lem4.4}
		If Algorithm \ref{alg3.1} does not terminate at $x_k$, then the following statements hold true:
		
		(a) If $\rho_k>0$, then $\rho_{k+1}>0$
		and
		\be
		\label{eq4.6}
		\D l_k^{\rho_{k+1}}(d_k)\ge\e\D l_k^v(d_k)\ge\e\D l_k^v(d_k^{fea})\ge0.
		\ee
		
		(b) $\D l_k^{\rho_{k+1}}(d_k)>0$, and the line search procedure terminates finitely at $\a_k\in(0,1]$.
	\end{Lemma}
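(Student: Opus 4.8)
The plan is to prove (a) and then (b). I would first record three facts used repeatedly. (i) Non-termination at $x_k$ forces $d_k\ne0$, since Algorithm~\ref{alg3.1} executes \textbf{Break} whenever $d_k=0$. (ii) $\rho_k>0$ for all $k$, by induction from $\rho_0>0$ using part~(a). (iii) $\Delta l_k^v(d_k)\ge\Delta l_k^v(d_k^{fea})\ge0$: because $d_k$ satisfies the constraints of \eqref{eq3.3}, $h_k+Dh(x_k)d_k=r_k-s_k$ and $\lambda_1(G_k+DG(x_k)d_k)\le t_k$, so with $r_k,s_k,t_k\ge0$ one gets $l_k^v(d_k)\le e_l^T(r_k+s_k)+t_k=l_k^v(d_k^{fea})$; subtracting from $l_k^v(0)$ and using Lemma~\ref{lem4.2}(b) gives the chain (and, after multiplying by $\epsilon$, the last two inequalities of \eqref{eq4.6}). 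I would also isolate the \emph{degenerate case} $\Delta l_k^v(d_k)=0$: the sandwich in (iii) forces $\Delta l_k^v(d_k^{fea})=0$, hence $d_k^{fea}=0$ by Lemma~\ref{lem4.2}(b); then the slack of \eqref{eq3.2} is pinned to its unique value $(r_k,s_k,t_k)=([h_k]_+,\,-[h_k]_-,\,[\lambda_1(G_k)]_+)$, which makes $d=0$ feasible for \eqref{eq3.3}; by strict convexity of \eqref{eq3.3} and $d_k\ne0$, $\rho_kg_k^Td_k+\frac{1}{2}d_k^TB_kd_k\le0$, so $g_k^Td_k<0$.

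For (a), assume $\rho_k>0$. In the branch of \eqref{eq3.5} where \eqref{eq3.4} holds, $\delta\rho_k>0$ and the denominator is strictly positive (all four terms are nonnegative, using $\bar Y_{k+1},\hat Y_{k+1}\succeq0$, and they do not all vanish by \eqref{eq3.4}), so $\rho_k'>0$; in the other branch $\rho_k'=\rho_k>0$. I would then split on \eqref{eq3.6}. If $\Delta l_k^{\rho_k'}(d_k)\ge\epsilon\Delta l_k^v(d_k)$, then $\rho_{k+1}=\rho_k'>0$ and the inequality in \eqref{eq4.6} is immediate. If $\Delta l_k^{\rho_k'}(d_k)<\epsilon\Delta l_k^v(d_k)$, then expanding $\Delta l_k^{\rho_k'}(d_k)=-\rho_k'g_k^Td_k+\Delta l_k^v(d_k)$ yields $\rho_k'g_k^Td_k>(1-\epsilon)\Delta l_k^v(d_k)\ge0$, so $g_k^Td_k>0$; by the degenerate case this rules out $\Delta l_k^v(d_k)=0$, so $\Delta l_k^v(d_k)>0$ and $\zeta_k$ has positive numerator and positive denominator $g_k^Td_k+\frac{1}{2}d_k^TB_kd_k$, whence $\rho_{k+1}=\min\{\delta\rho_k',\zeta_k\}>0$; finally, $\rho_{k+1}\le\zeta_k$ and $0<g_k^Td_k<g_k^Td_k+\frac{1}{2}d_k^TB_kd_k$ give $\rho_{k+1}g_k^Td_k<(1-\epsilon)\Delta l_k^v(d_k)$, i.e. $\Delta l_k^{\rho_{k+1}}(d_k)>\epsilon\Delta l_k^v(d_k)$. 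Together with (iii) this proves \eqref{eq4.6}.

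For (b), if $\Delta l_k^v(d_k)>0$ then $\Delta l_k^{\rho_{k+1}}(d_k)\ge\epsilon\Delta l_k^v(d_k)>0$ by \eqref{eq4.6}; if $\Delta l_k^v(d_k)=0$ then the degenerate case gives $g_k^Td_k<0$, so $\Delta l_k^{\rho_{k+1}}(d_k)=-\rho_{k+1}g_k^Td_k>0$ since $\rho_{k+1}>0$. Hence $\Delta l_k^{\rho_{k+1}}(d_k)>0$ in all cases. For the line search, Lemma~\ref{lem4.1} gives $(P^{\rho_{k+1}})'(x_k;d_k)\le-\Delta l_k^{\rho_{k+1}}(d_k)<0$; combining this with the first-order expansion $P^{\rho_{k+1}}(x_k+\alpha d_k)-P^{\rho_{k+1}}(x_k)=\alpha(P^{\rho_{k+1}})'(x_k;d_k)+o(\alpha)$ (exactly the expansion used in the proof of Lemma~\ref{lem4.1}) and $\eta\in(0,1)$, inequality \eqref{eq3.7} holds for all sufficiently small $\alpha>0$; since the trial steps $\gamma^i$ tend to $0$, the inner loop accepts some $\alpha_k=\gamma^i\in(0,1]$.

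The hard part will be the bookkeeping across the two-stage update \eqref{eq3.5}--\eqref{eq3.6}, and specifically the degenerate case $\Delta l_k^v(d_k)=0$: one must notice that it forces $d_k^{fea}=0$, so $x_k$ is stationary for $v(\cdot)$ and (by non-termination) $d_k\ne0$, which makes $d=0$ feasible for \eqref{eq3.3} and hence $g_k^Td_k<0$ --- this is what simultaneously keeps $\Delta l_k^{\rho_{k+1}}(d_k)$ strictly positive and prevents $\zeta_k$ from vanishing (which would otherwise break $\rho_{k+1}>0$). The remaining steps are routine sign manipulations and a standard Armijo termination argument.
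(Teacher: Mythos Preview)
Your proposal is correct and follows essentially the same approach as the paper's proof: the same chain $l_k^v(d_k)\le l_k^v(d_k^{fea})$ via feasibility of $d_k$ for \eqref{eq3.3}, the same degenerate-case analysis ($\Delta l_k^v(d_k)=0\Rightarrow d_k^{fea}=0\Rightarrow d=0$ feasible for \eqref{eq3.3} $\Rightarrow g_k^Td_k<0$), the same $\zeta_k$-bound computation, and the same Armijo argument via Lemma~\ref{lem4.1}. The only cosmetic difference is that the paper splits first on whether $\Delta l_k^v(d_k)=0$ or $>0$ and then on the branch of \eqref{eq3.6}, whereas you split on \eqref{eq3.6} first and invoke the degenerate case to rule out $\Delta l_k^v(d_k)=0$ inside the $\zeta_k$-branch; the logic is equivalent.
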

	
	\begin{proof}
		(a) We begin by demonstrating that $\rho'_k>0$. Since both $\bar Y_{k+1}$ and $\hat
		Y_{k+1}$ are Lagrange multipliers associated with the semidefinite constraints, we know that $\bar Y_{k+1},\hat Y_{k+1}$ are positive semidefinite matrices, and the trace of each matrix is nonnegative. If the condition
		\[
		\|\bar\mu_{k+1}\|_\infty+\text{tr}(\bar Y_{k+1})+\|\hat\mu_{k+1}\|_\infty+\text{tr}(\hat
		Y_{k+1})=0
		\]
		is satisfied, then \eqref{eq3.4} is violated, leading to $\rho'_k=\rho_k>0$. Alternatively, either
		$\rho'_k=\rho_k>0$ or $\rho_k$ is updated in a manner such that
		\[
		\rho_k'=\min\left\{\delta\rho_k,\ \dfrac{1-\e}{\|\bar\mu_{k+1}\|_\infty+\text{tr}(\bar
			Y_{k+1})+\|\hat\mu_{k+1}\|_\infty+\text{tr}(\hat Y_{k+1})}\right\}>0.
		\]
		
		Considering that $d_k$ solves \eqref{eq3.3}, and $(r_k,s_k,t_k)$ is a solution to \eqref{eq3.2}, we establish the following inequalities:
		\[
		l_k^v(d_k)\le\|r_k-s_k\|_1+t_k\le e_l^T(r_k+s_k)+t_k=l_k^v(d_k^{fea}).
		\]
		Furthermore, utilizing \eqref{eq4.2}, we have that
		\[
		\D l_k^v(d_k)\ge\D l_k^v(d_k^{fea})\ge0.
		\]
		Consequently, all that remains to prove that
		\be
		\label{eq4.7}
		\rho_{k+1}>0, \quad \D l_k^{\rho_{k+1}}(d_k)\ge\e\D l_k^v(d_k).
		\ee
		We will consider two cases: when $\D l_k^v(d_k)=0$ and when $\D l_k^v(d_k)>0$.
		
		In the instance where $\D l_k^v(d_k)=0$, it follows that $\D l_k^v(d_k^{fea})=0$, which, by Lemma \ref{lem4.2}, implies
		$d_k^{fea}=0$. Since the algorithm does not terminate at $x_k$, it is evident that $d_k\neq0$. According to Assumption A3, $d_k^TB_kd_k>0$.
		Since $d_k^{fea}=0$ is a feasible point for \eqref{eq3.3}, we have that
		\[
		\rho_kg_k^Td_k<\rho_kg_k^Td_k+\frac{1}{2}d_k^TB_kd_k\le0.
		\]
		Hence, we deduce $g_k^Td_k<0$ and thus that
		\[
		\D l_k^{\rho_k'}(d_k)
		=\rho_k'\D l_k^f(d_k)+\D l_k^v(d_k)
		=-\rho_k'g_k^Td_k
		>0=\e\D l_k^v(d_k),
		\]
		which follows by \eqref{eq3.6} that $\rho_{k+1}=\rho_k'$, confirming the validity of \eqref{eq4.7}.
		
		In the scenario where $\D l_k^v(d_k)>0$, we distinguish between two sub-cases. If $\D l_k^{\rho_k'}(d_k)\ge\e\D
		l_k^v(d_k)$, then $\rho_{k+1}=\rho_k'$, again confirming \eqref{eq4.7}. However, if $\D l_k^{\rho_k'}(d_k)<\e\D
		l_k^v(d_k)$, then the following inequality holds:
		\[
		-\rho_k'g_k^Td_k+\D l_k^v(d_k)<\e\D l_k^v(d_k),
		\]
		which subsequently leads to
		\[
		g_k^Td_k>\frac{1-\e}{\rho_k'}\D l_k^v(d_k)>0.
		\]
		As a result, $\zeta_k>0$ and thus that $\rho_{k+1}=\min\{\d\rho'_k,\zeta_k\}>0$. We conclude by $g_k^Td_k>0$ and $\rho_{k+1}\le\zeta_k$ that
		\begin{align*}
			\D l_k^{\rho_{k+1}}(d_k)
			&=-\rho_{k+1}g_k^Td_k+\D l_k^v(d_k)\\
			&\ge-\zeta_kg_k^Td_k+\D l_k^v(d_k)\\
			&=-\dfrac{(1-\e)\D l_k^v(d_k)}{g_k^Td_k+0.5d_k^TB_kd_k}g_k^Td_k+\D l_k^v(d_k)\\
			&\ge-\dfrac{(1-\e)\D l_k^v(d_k)}{g_k^Td_k}g_k^Td_k+\D l_k^v(d_k)\\
			&=\e\D l_k^v(d_k),
		\end{align*}
		which proves \eqref{eq4.7}.
		
		(b) We begin by asserting that $\D l_k^{\rho_{k+1}}(d_k)>0$. Indeed, by (a), $\D
		l_k^{\rho_{k+1}}(d_k)=0$ only if $\D l_k^v(d_k)=\D l_k^v(d_k^{fea})=0$. This implies that $d_k^{fea}=0$.
		Consequently, $d=0$ becomes a feasible point for \eqref{eq3.3}. As $d_k\neq0$, it follows that
		\[
		\rho_kg_k^Td_k<\rho_kg_k^Td_k+\frac12d_k^TB_kd_k\le0,
		\]
		which leads to $g_k^Td_k<0$ and thus that $\D l_k^{\rho_{k+1}}(d_k)=-\rho_{k+1}g_k^Td_k>0$. We have established that $\D
		l_k^{\rho_{k+1}}(d_k)>0$. By the definition of $(P^\rho)'(x_k;d))$ and \eqref{eq4.1}, it follows that
		\[
		\lim_{\a\to0^+}\dfrac{P^{\rho_{k+1}}(x_k+\a d_k)-P^{\rho_{k+1}}(x_k)}{\a}=
		(P^{\rho_{k+1}})'(x_k;d_k)\le-\D l_k^{\rho_{k+1}}(d_k)<0.
		\]
		Taking into consideration the continuity of $P^\rho(x)$ and the definition of limit, for $\eta\in(0,1)$, there exists $\bar\a_k\in(0,1]$
		sufficiently small such that
		\[
		P^{\rho_{k+1}}(x_k+\a d_k)-P^{\rho_{k+1}}(x_k)
		\le\eta\a(P^{\rho_{k+1}})'(x_k;d_k)
		\le-\eta\a\D l_k^{\rho_{k+1}}(d_k)
		\]
		holds for all $\a\in(0,\bar\a_k]$.
	\end{proof}
	
	Since the subproblems \eqref{eq3.2} and \eqref{eq3.3} are always feasible, and based on the conclusion drawn from Lemma \ref{lem4.4} that the line search procedure terminates in a finite of times, we are able to establish that Algorithm \ref{alg3.1} is well defined.
	
	\begin{Theorem}
		\label{thm4.5}
		Algorithm \ref{alg3.1} exhibits two possible outcomes: either it terminates finitely, or it generates an infinite sequence of iterations $\{(x_k,\a_k,d_k^{fea},d_k)\}$ along with associated multipliers $\{(\rho_k,\bar\mu_{k+1},\bar
		Y_{k+1},\hat\mu_{k+1},\hat Y_{k+1})\}$ satisfying
		\[
		\rho_k>0, \quad -e\le\bar\mu_{k+1}\le e, \quad \bar Y_{k+1}\succeq0, \quad {\rm tr}(\bar
		Y_{k+1})\le1, \quad \hat Y_{k+1}\succeq0.
		\]
	\end{Theorem}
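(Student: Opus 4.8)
The plan is to run an induction on the iteration index $k$, showing that at every pass of the main loop of Algorithm~\ref{alg3.1} each instruction is executable and that the only way to leave the loop is through the test ``$d_k=0$''. The base case $k=0$ is trivial: $\rho_0>0$ and $B_0^{fea},B_0\in\m{S}_{++}^n$ by the initialization. For the inductive step I suppose the algorithm has entered iteration $k$ with $\rho_k>0$ and check the four stages in order. First, the feasibility subproblem \eqref{eq3.2}: Lemma~\ref{lem4.2}(a) already gives feasibility and the uniqueness of $d_k^{fea}$; I would add that \eqref{eq3.2} is a convex quadratic semidefinite program meeting Slater's condition, since $\bigl(0,\,[h_k]_++e_l,\,-[h_k]_-+e_l,\,[\l_1(G_k)]_++1\bigr)$ is strictly feasible, so the Lagrange multipliers $(\bar\mu_{k+1},\bar Y_{k+1})$ exist and satisfy the first-order conditions \eqref{eq4.3}; in particular the last line of \eqref{eq4.3} yields $-e_l\le\bar\mu_{k+1}\le e_l$, $\bar Y_{k+1}\succeq0$ and $\text{tr}(\bar Y_{k+1})\le1$.

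Next I would handle the optimality subproblem \eqref{eq3.3}: its constraints coincide with those of \eqref{eq3.2} with $(r,s,t)$ frozen at the values $(r_k,s_k,t_k)$ just obtained, so $d=d_k^{fea}$ is feasible for it, and since its objective is strictly convex in $d$ by Assumption~A3, it has a unique solution $d_k$ with multipliers $(\hat\mu_{k+1},\hat Y_{k+1})$, where $\hat Y_{k+1}\succeq0$ as the multiplier of the $\preceq$-constraint. If $d_k=0$, the algorithm executes a \textbf{Break} and hence terminates finitely. If $d_k\ne0$, then Lemma~\ref{lem4.4}(a) shows that the update \eqref{eq3.5}--\eqref{eq3.6} returns $\rho_{k+1}>0$ (together with \eqref{eq4.6}), and Lemma~\ref{lem4.4}(b) shows that the Armijo line search \eqref{eq3.7} accepts some $\a_k\in(0,1]$ after finitely many backtracks; setting $x_{k+1}=x_k+\a_kd_k$ and performing the matrix update, the algorithm reaches iteration $k+1$ with $\rho_{k+1}>0$, which closes the induction. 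Combining the two exits: if the test ``$d_k=0$'' ever fires, the algorithm stops after finitely many iterations; otherwise the loop runs indefinitely and generates the infinite sequence $\{(x_k,\a_k,d_k^{fea},d_k)\}$ with multipliers $\{(\rho_k,\bar\mu_{k+1},\bar Y_{k+1},\hat\mu_{k+1},\hat Y_{k+1})\}$, on which $\rho_k>0$ for all $k$ (base case plus Lemma~\ref{lem4.4}(a)) and the bounds $-e_l\le\bar\mu_{k+1}\le e_l$, $\bar Y_{k+1}\succeq0$, $\text{tr}(\bar Y_{k+1})\le1$, $\hat Y_{k+1}\succeq0$ hold at every $k$ by the above.

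The point that I expect to need the most care is the existence of the Lagrange multipliers for the two subproblems, i.e., the validity of a constraint qualification. For \eqref{eq3.2} this is clean, as I can write down the explicit Slater point above. For \eqref{eq3.3} the semidefinite constraint $G_k+DG(x_k)d\preceq t_kI_m$ is active at $d=d_k^{fea}$ whenever $t_k>0$ (one checks $t_k=[\l_1(G_k+DG(x_k)d_k^{fea})]_+$ from the partial optimality of \eqref{eq3.2} in $(r,s,t)$), so strict feasibility of \eqref{eq3.3} is not automatic; I would deal with this either by imposing it as a standing constraint qualification, by arguing that the regularization term in \eqref{eq3.2} prevents $d_k^{fea}$ from minimizing $\l_1(G_k+DG(x_k)\cdot)$ over the equality-constrained affine subspace (so a strictly feasible $d$ for \eqref{eq3.3} exists in the generic case), or by a facial reduction onto the active block, reducing to a smaller semidefinite program for which Slater holds. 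The remaining ingredient, propagation of $\rho_k>0$ through the two-stage penalty update \eqref{eq3.4}--\eqref{eq3.6}, needs no new work since it is precisely Lemma~\ref{lem4.4}(a); everything else is bookkeeping on the loop structure.
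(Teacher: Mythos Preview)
Your proposal is correct and follows the same approach the paper takes: the paper's entire ``proof'' is the single sentence preceding the theorem, observing that subproblems \eqref{eq3.2} and \eqref{eq3.3} are always feasible and that Lemma~\ref{lem4.4} handles the penalty update and the finite termination of the line search. Your write-up is in fact more careful than the paper's, since you explicitly address the existence of Lagrange multipliers for the two subproblems (via a Slater point for \eqref{eq3.2} and a discussion of the CQ for \eqref{eq3.3}), an issue the paper silently assumes away when writing down \eqref{eq4.3} and \eqref{eq4.4}.
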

	
	\section{Global convergence}
	\label{sect5}
	
	In this section, we establish the global convergence results for Algorithm \ref{alg3.1}. Firstly, we introduce Robinson's constraint qualification.
	
	\begin{Definition}
		\label{def5.1}
		A feasible point $x^*$ of the problem \eqref{eq1.1} satisfies Robinson's constraint qualification if and only if $Dh(x^*)$ has full row rank, and there exists a unit vector $\bar d\in\m{R}^n$ such that
		\[
		Dh(x^*)\bar d=0, \quad G(x^*)+DG(x^*)\bar d\prec0.
		\]
	\end{Definition}
	
	The following lemma demonstrates that the solutions for the subproblem \eqref{eq3.1} and \eqref{eq3.3} are both bounded.
	
	\begin{Lemma}
		\label{lem5.2}
		The sequences $\{d_k^{fea}\}$ and $\{d_k\}$ are both bounded.
	\end{Lemma}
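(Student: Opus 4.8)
The plan is to bound $\{d_k^{fea}\}$ first, then use it to bound $\{d_k\}$. For the feasible directions, recall from Lemma~\ref{lem4.2}(b) and inequality \eqref{eq4.2} that
\[
\tfrac{1}{2}(d_k^{fea})^TB_k^{fea}d_k^{fea}\le\D l_k^v(d_k^{fea})=l_k^v(0)-l_k^v(d_k^{fea})\le l_k^v(0)=v_k.
\]
By Assumption A3 the left side is at least $\tfrac{1}{2}b_1\|d_k^{fea}\|^2$, so $\|d_k^{fea}\|^2\le 2v_k/b_1$. Since $x_k\in\Om$ for all $k$ by Assumption A2, and $v(\cdot)$ is continuous (indeed built from the $C^2$ data $h,G$ via Assumption A1), $v_k$ is bounded above by $\sup_{x\in\Om}v(x)<\infty$. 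Hence $\{d_k^{fea}\}$ is bounded, say $\|d_k^{fea}\|\le M_{fea}$ for all $k$.

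For $\{d_k\}$ I would exploit that $d=d_k^{fea}$ is feasible for \eqref{eq3.3}: indeed $(d_k^{fea},r_k,s_k,t_k)$ solves \eqref{eq3.2}, so $h_k+Dh(x_k)d_k^{fea}=r_k-s_k$ and $G_k+DG(x_k)d_k^{fea}\preceq t_kI_m$, which is exactly feasibility for the constraints of \eqref{eq3.3}. Since $d_k$ is the minimizer of the strictly convex objective of \eqref{eq3.3}, we get
\[
\rho_kg_k^Td_k+\tfrac{1}{2}d_k^TB_kd_k\le\rho_kg_k^Td_k^{fea}+\tfrac{1}{2}(d_k^{fea})^TB_k^{fea,\text{obj}}\!\cdots
\]
— more precisely $\rho_kg_k^Td_k+\tfrac12 d_k^TB_kd_k\le\rho_kg_k^Td_k^{fea}+\tfrac12(d_k^{fea})^TB_kd_k^{fea}$, so using A3,
\[
\tfrac{1}{2}b_1\|d_k\|^2\le\tfrac{1}{2}d_k^TB_kd_k\le\rho_k g_k^T(d_k^{fea}-d_k)+\tfrac{1}{2}b_2\|d_k^{fea}\|^2.
\]
Now I need $\rho_k$ and $g_k$ bounded: $\{g_k\}=\{\na f(x_k)\}$ is bounded because $f\in C^2$ (A1) and $x_k\in\Om$ compact (A2); and $\{\rho_k\}$ is bounded above since, by the update rules \eqref{eq3.5}--\eqref{eq3.6}, $\rho_k$ is nonincreasing except possibly nothing — in fact $\rho_{k}'\le\rho_k$ always (the min in \eqref{eq3.5} is $\le\delta\rho_k<\rho_k$ when \eqref{eq3.4} triggers, else equal) and $\rho_{k+1}\le\delta\rho_k'$ or $\rho_{k+1}=\rho_k'$, so $\rho_k\le\rho_0$ for all $k$. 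Writing $\bar\rho=\rho_0$ and $\bar g=\sup_{x\in\Om}\|\na f(x)\|$, the displayed inequality becomes
\[
\tfrac{1}{2}b_1\|d_k\|^2\le\bar\rho\,\bar g\,(M_{fea}+\|d_k\|)+\tfrac{1}{2}b_2M_{fea}^2,
\]
a quadratic inequality in $\|d_k\|$ whose right side grows only linearly, which forces $\|d_k\|$ to be bounded by a constant depending only on $b_1,b_2,\bar\rho,\bar g,M_{fea}$.

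The main obstacle is establishing the uniform bound on $\{\rho_k\}$: one must verify carefully from \eqref{eq3.5}--\eqref{eq3.6} that no branch ever increases $\rho_k$ above $\rho_0$ (each branch multiplies by $\delta<1$ or takes a $\min$ with the previous value or with $\zeta_k$, never enlarging it), so that $\rho_k\le\rho_0$ holds for every $k$; everything else is a routine application of Assumptions A1--A3 together with the strict convexity of the subproblems. One subtlety worth a sentence: the feasibility of $d_k^{fea}$ for \eqref{eq3.3} is precisely why \eqref{eq3.3} is always solvable, so comparing the optimal value of \eqref{eq3.3} against the value at $d=d_k^{fea}$ is legitimate.
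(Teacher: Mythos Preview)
Your proof is correct and follows essentially the same approach as the paper: bound $d_k^{fea}$ by comparing the optimal value of \eqref{eq3.1} with its value at $d=0$ (giving $\tfrac12 b_1\|d_k^{fea}\|^2\le v_k\le\sup_{\Om}v$), then bound $d_k$ by comparing the optimal value of \eqref{eq3.3} with its value at the feasible point $d=d_k^{fea}$, using $\rho_k\le\rho_0$ and the boundedness of $\{g_k\}$. The paper phrases both steps as proofs by contradiction rather than as your direct quadratic-inequality estimates, but the underlying inequalities and ingredients (Assumptions A1--A3, feasibility of $d_k^{fea}$ for \eqref{eq3.3}, $\rho_k\le\rho_0$) are identical.
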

	
	\begin{proof}
		By Assumption A1 and A2, there exists a constant $v_{\max}>0$ such that $v_k\le v_{\max}$ holds for
		all $k$. Suppose, by contradiction, that the sequence $\{d_k^{fea}\}$ is unbounded. Then, there
		exists an infinite index set $\m{K}_1$ such that
		\[
		\|d_k^{fea}\|^2>\dfrac{2v_{\max}}{b_1}, \quad \fa k\in\m{K}_1,
		\]
		where $b_1$ is defined in Assumption A3. However, since $(0,[h_k]_+,[h_k]_-,[\l_1(G_k)]_+)$ is a feasible point for \eqref{eq2.3}, we deduce that for $k\in\m{K}_1$,
		\begin{align*}
		v_k=l_k^v(0)
		&\ge l_k^v(d_k^{fea})+\dfrac{1}{2}(d_k^{fea})^TB_k^{fea}d_k^{fea}\\
		&\ge\dfrac{1}{2}(d_k^{fea})^TB_k^{fea}d_k^{fea}\\
		&\ge\dfrac{1}{2}b_1\|d_k^{fea}\|^2\\
		&>v_{\max}\\
		&\ge v_k.
		\end{align*}
		This contradiction arises due to the inconsistency between the derived inequality and $v_k\le v_{\max}$. As a result, we conclude that the sequence $\{d_k^{fea}\}$ is bounded.
		
		We suppose, by contradiction, that there exists an infinite index set $\m{K}_2$ such that
		\[
		\|d_k\|\ge\max\left\{1+\dfrac{8\rho_0\|g_k\|}{b_1},\sqrt{\dfrac{2b_2}{b_1}}\|d_k^{fea}\|\right\},
		\quad k\in\m{K}_2.
		\]
		Then,
		\[
		\rho_0\|g_k\|\|d_k\|<\dfrac{1}{8}b_1\|d_k\|^2,
		\]
		\[
		\rho_0\|g_k\|\|d_k^{fea}\|<\dfrac{1}{8}b_1\sqrt{\dfrac{b_1}{2b_2}}\|d_k\|^2\le\dfrac{1}{8}b_1\|d_k\|^2,
		\]
		\[
		\dfrac{1}{2}b_2\|d_k^{fea}\|^2\le\dfrac{1}{4}b_1\|d_k\|^2.
		\]
		Thus
		\begin{align*}
			&\quad-\rho_kg_k^Td_k+\rho_kg_k^Td_k^{fea}+\dfrac{1}{2}(d_k^{fea})^TB_k^{fea}d_k^{fea}\\
			&\le\rho_0\|g_k\|\|d_k\|+\rho_0\|g_k\|\|d_k^{fea}\|+\dfrac{1}{2}b_2\|d_k^{fea}\|^2\\
			&<\dfrac{1}{8}b_1\|d_k\|^2+\dfrac{1}{8}b_1\|d_k\|^2+\dfrac{1}{4}b_1\|d_k\|^2\\
			&=\dfrac{1}{2}b_1\|d_k\|^2\\
			&\le\dfrac{1}{2}(d_k)^TB_kd_k,
		\end{align*}
		i.e.,
		\[
		\rho_kg_k^Td_k^{fea}+\dfrac{1}{2}(d_k^{fea})^TB_k^{fea}d_k^{fea}
		<\rho_kg_k^Td_k+\dfrac{1}{2}(d_k)^TB_kd_k.
		\]
		This contradicts the fact that $d_k^{fea}$ is a feasible point for \eqref{eq3.3}, while $d_k$ is a global minimizer for it. Hence, by the boundedness of $d_k^{fea}$, we deduce that $d_k$ is bounded by
		\[
		\|d_k\|\le\sup_k\max\left\{1+\dfrac{8\rho_0\|g_k\|}{b_1},\sqrt{\dfrac{2b_2}{b_1}}\|d_k^{fea}\|\right\}.
		\]
		In conclusion, by Assumption A1 and A2, the boundedness of $\{\|g_k\|\}$ implies the boundedness of $\{\|d_k\|\}$.
	\end{proof}
	
	The following lemma provides a lower bound for $\a_k$ for each $k$.
	
	\begin{Lemma}
		\label{lem5.3}
		There exists a constant $b_\a>0$ such that
		\[
		\a_k\ge b_\a\D l_k^{\rho_{k+1}}(d_k)
		\]
		holds for all $k>0$.
	\end{Lemma}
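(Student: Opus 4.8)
The plan is to combine the backtracking structure of the Armijo line search \eqref{eq3.7} with a uniform second-order upper bound on the change of the penalty function along $d_k$. First I would establish that there is a constant $C>0$, independent of $k$, such that for all $\a\in(0,1]$
\[
P^{\rho_{k+1}}(x_k+\a d_k)-P^{\rho_{k+1}}(x_k)\le-\a\D l_k^{\rho_{k+1}}(d_k)+C\a^2.
\]
To obtain this I would split $P^\rho=\rho f+v$. For the smooth part, Assumptions A1--A2 give a uniform bound on $\na^2f$ over the convex compact set $\Om$, so a second-order Taylor expansion yields $f(x_k+\a d_k)\le f_k+\a g_k^Td_k+C_f\a^2\|d_k\|^2$. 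For the nonsmooth part, I would write $h_k+\a Dh(x_k)d_k=(1-\a)h_k+\a(h_k+Dh(x_k)d_k)$ and $G_k+\a DG(x_k)d_k=(1-\a)G_k+\a(G_k+DG(x_k)d_k)$, and then use the convexity together with the $1$-Lipschitz continuity (in the Frobenius/$\ell_1$ norm) of $\|\cdot\|_1$ and of $[\l_1(\cdot)]_+$ --- the convexity being exactly what was shown in the proof of Lemma \ref{lem4.1} --- plus the second-order Taylor remainders of $h$ and $G$ on $\Om$, to get $v(x_k+\a d_k)\le(1-\a)v_k+\a l_k^v(d_k)+C_v\a^2\|d_k\|^2=v_k-\a\D l_k^v(d_k)+C_v\a^2\|d_k\|^2$. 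Adding the two estimates, using that the update rules \eqref{eq3.5}--\eqref{eq3.6} only ever decrease the penalty parameter so that $0<\rho_{k+1}\le\rho_0$, and using the boundedness of $\{d_k\}$ from Lemma \ref{lem5.2}, produces the displayed inequality with $C:=(\rho_0C_f+C_v)\sup_k\|d_k\|^2$.

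Next I would compare this bound with the Armijo test. The displayed inequality shows that \eqref{eq3.7} is satisfied whenever $-\a\D l_k^{\rho_{k+1}}(d_k)+C\a^2\le-\eta\a\D l_k^{\rho_{k+1}}(d_k)$, that is, whenever $\a\le\frac{1-\eta}{C}\D l_k^{\rho_{k+1}}(d_k)$; note $\D l_k^{\rho_{k+1}}(d_k)>0$ by Lemma \ref{lem4.4}(b), so this threshold is positive. Now two cases. If $\frac{1-\eta}{C}\D l_k^{\rho_{k+1}}(d_k)\ge1$, then $\a=1$ passes the test, $\a_k=1$, and since $\D l_k^{\rho_{k+1}}(d_k)=-\rho_{k+1}g_k^Td_k+\D l_k^v(d_k)$ is bounded above by a constant $\D_{\max}$ (using $\rho_{k+1}\le\rho_0$, the boundedness of $\{g_k\}$ on $\Om$ and of $\{d_k\}$, and $0\le\D l_k^v(d_k)\le v_k\le v_{\max}$), we get $\a_k=1\ge\D_{\max}^{-1}\D l_k^{\rho_{k+1}}(d_k)$. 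Otherwise $\frac{1-\eta}{C}\D l_k^{\rho_{k+1}}(d_k)<1$, so the trial step $\a_k/\g$ immediately preceding the accepted one must have failed \eqref{eq3.7}, which by the above forces $\a_k/\g>\frac{1-\eta}{C}\D l_k^{\rho_{k+1}}(d_k)$, i.e. $\a_k>\frac{\g(1-\eta)}{C}\D l_k^{\rho_{k+1}}(d_k)$. Taking $b_\a:=\min\{\D_{\max}^{-1},\ \frac{\g(1-\eta)}{C}\}>0$ yields the claim for all $k$.

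The main obstacle I expect is the first step: deriving the uniform quadratic upper bound on the penalty function along $d_k$ while handling the nonsmooth terms cleanly. The crucial device is the convex-combination rewriting of the linearized arguments combined with the Lipschitz property of $[\l_1(\cdot)]_+$, which confines the second-order error to the smooth Taylor remainders of $f$, $h$, $G$; these are controlled uniformly because $\Om$ is compact and convex (Assumptions A1--A2, so segments stay in $\Om$). One must also not forget that the penalty parameter is monotonically nonincreasing, so that $\rho_{k+1}\le\rho_0$ furnishes the uniform coefficient in front of the second-order term.
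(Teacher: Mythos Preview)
Your proposal is correct and follows essentially the same route as the paper: both derive a uniform quadratic upper bound $P^{\rho_{k+1}}(x_k+\a d_k)-P^{\rho_{k+1}}(x_k)\le-\a\D l_k^{\rho_{k+1}}(d_k)+\tau\a^2$ by combining the convex-combination argument for the nonsmooth part with second-order Taylor remainders controlled on the compact set $\Om$, and then invoke the backtracking rule to get the lower bound $\a_k\ge\dfrac{\g(1-\eta)}{\tau}\D l_k^{\rho_{k+1}}(d_k)$. Your treatment is in fact slightly more careful than the paper's, since you explicitly handle the case $\a_k=1$ via the uniform upper bound $\D_{\max}$ on $\D l_k^{\rho_{k+1}}(d_k)$, whereas the paper simply sets $b_\a=\g(1-\eta)/\tau$ without isolating that case.
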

	
	\begin{proof}
		By the convexity of $\|\cdot\|_1$ and $[\l_1(\cdot)]_+$, as well as the boundedness of $d_k$, we can deduce that
		\begin{align*}
			&\quad v(x_k+\a d_k)\\
			&=\|h(x_k+\a d_k)\|_1+[\l_1(G(x_k+\a d_k))]_+\\
			&\le\|h_k+\a Dh(x_k)d_k\|_1+[\l_1(G_k+\a DG(x_k)d_k)]_++O(\a^2)\\
			&\le(1-\a)\|h_k\|_1+\a\|h_k+Dh(x_k)d_k\|_1\\
			&\quad+(1-\a)[\l_1(G_k)]_++\a[\l_1(G_k+DG(x_k)d_k)]_++O(\a^2)\\
			&=(1-\a)v_k+\a l_k^v(d_k)+O(\a^2).
		\end{align*}
		Continuing, we have that
		\begin{align*}
			&\quad P^{\rho_{k+1}}(x_k+\a d_k)\\
			&=\rho_{k+1}f(x_k+\a d_k)+v(x_k+\a d_k)\\
			&\le\rho_{k+1}(f_k+\a g_k^Td_k)+(1-\a)v_k+\a l_k^v(d_k)+O(\a^2)\\
			&=\rho_{k+1}f_k+v_k-\a(v_k-\rho_{k+1}g_k^Td_k-l_k^v(d_k))+O(\a^2)\\
			&=P^{\rho_{k+1}}(x_k)-\a\D l_k^{\rho_{k+1}}(d_k)+O(\a^2).
		\end{align*}
		Thus, according to Assumption A1 and A2, there exists a constant $\tau>0$ such that
		\begin{align*}
			&\quad P^{\rho_{k+1}}(x_k+\a d_k)-P^{\rho_{k+1}}(x_k)\\
			&\le-\a\D l_k^{\rho_{k+1}}(d_k)+\tau\a^2\\
			&=-\eta\a\D l_k^{\rho_{k+1}}(d_k)+\tau\a^2-(1-\eta)\a\D l_k^{\rho_{k+1}}(d_k)\\
			&\le-\eta\a\D l_k^{\rho_{k+1}}(d_k)
		\end{align*}
		holds for $\a$ satisfying
		\[
		0<\a\le\dfrac{(1-\eta)}{\tau}\D l_k^{\rho_{k+1}}(d_k).
		\]
		Then, by the line search strategy, it follows that
		\[
		\a_k\ge\dfrac{\gamma(1-\eta)}{\tau}\D l_k^{\rho_{k+1}}(d_k)\ge b_\a\D l_k^{\rho_{k+1}}(d_k),
		\quad
		b_\a:=\dfrac{\gamma(1-\eta)}{\tau}.
		\]
	\end{proof}
	
	Since it is not convenient to describe the decrease of the penalty function from $P^{\rho_k}(x_k)$ to $P^{\rho_{k+1}}(x_{k+1})$, we here introduce the shifted penalty function (as defined in Equation (4.5) in \cite{Burke14})
	\be
	\label{eq5.1}
	\phi(x,\rho):=\rho(f(x)-f_{\min})+v(x)
	\ee
	where $f_{\min}:=\inf_{x\in\Omega}f(x)$. The shifted penalty function $\phi(x,\rho)$ possesses a useful
	monotonicity property established in the following lemma.
	
	\begin{Lemma}
		\label{lem5.4}
		For all $k$,
		\[
		\phi(x_{k+1},\rho_{k+2})-\phi(x_k,\rho_{k+1})\le-\eta\a_k\D l_k^{\rho_{k+1}}(d_k),
		\]
		so, the sequence $\{\phi(x_k,\rho_{k+1})\}$ is monotonically decreasing.
	\end{Lemma}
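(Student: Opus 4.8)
The plan is to decompose the change of $\phi$ over one iteration into (i) the effect of shrinking the penalty parameter from $\rho_{k+1}$ to $\rho_{k+2}$ while the point stays at $x_{k+1}$, and (ii) the effect of moving from $x_k$ to $x_{k+1}$ while the parameter stays at $\rho_{k+1}$; the second effect is exactly the Armijo decrease of $P^{\rho_{k+1}}$. Concretely I would write
\[
\phi(x_{k+1},\rho_{k+2})-\phi(x_k,\rho_{k+1})
=\big[\phi(x_{k+1},\rho_{k+2})-\phi(x_{k+1},\rho_{k+1})\big]
+\big[\phi(x_{k+1},\rho_{k+1})-\phi(x_k,\rho_{k+1})\big]
\]
and bound the two brackets in turn.

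For the first bracket, $\phi(x_{k+1},\rho_{k+2})-\phi(x_{k+1},\rho_{k+1})=(\rho_{k+2}-\rho_{k+1})\big(f(x_{k+1})-f_{\min}\big)$. By Assumption A2 we have $x_{k+1}\in\Omega$, hence $f(x_{k+1})\ge f_{\min}$; and from the two-stage update \eqref{eq3.5}--\eqref{eq3.6} applied at index $k+1$ one checks $\rho_{k+2}\le\rho'_{k+1}\le\rho_{k+1}$, since each stage either leaves the parameter fixed or replaces it by a minimum involving $\delta$ times its current value with $\delta\in(0,1)$. Therefore $(\rho_{k+2}-\rho_{k+1})\big(f(x_{k+1})-f_{\min}\big)\le0$. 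For the second bracket, the $f_{\min}$ terms cancel, giving
\[
\phi(x_{k+1},\rho_{k+1})-\phi(x_k,\rho_{k+1})
=\rho_{k+1}\big(f(x_{k+1})-f_k\big)+v(x_{k+1})-v_k
=P^{\rho_{k+1}}(x_{k+1})-P^{\rho_{k+1}}(x_k);
\]
since $x_{k+1}=x_k+\a_kd_k$ and $\a_k$ was chosen to satisfy \eqref{eq3.7}, this difference is at most $-\eta\a_k\D l_k^{\rho_{k+1}}(d_k)$. Summing the two bounds yields the stated inequality.

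The monotone decrease of $\{\phi(x_k,\rho_{k+1})\}$ then follows immediately, because $\a_k\in(0,1]$ and, by Lemma \ref{lem4.4}(b), $\D l_k^{\rho_{k+1}}(d_k)>0$, so the right-hand side $-\eta\a_k\D l_k^{\rho_{k+1}}(d_k)$ is strictly negative at every iteration. I do not expect a genuine obstacle here: the only delicate bookkeeping is verifying that the composite update \eqref{eq3.5}--\eqref{eq3.6} is non-increasing (i.e.\ $\rho_{k+2}\le\rho_{k+1}$) and keeping the index shift straight — pairing $\rho_{k+2}$ with $x_{k+1}$ and $\rho_{k+1}$ with $x_k$ — so that $\{\phi(x_k,\rho_{k+1})\}$ telescopes correctly when this estimate is used later in the convergence proof.
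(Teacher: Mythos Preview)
Your proposal is correct and follows essentially the same approach as the paper: both split the difference $\phi(x_{k+1},\rho_{k+2})-\phi(x_k,\rho_{k+1})$ through the intermediate value $\phi(x_{k+1},\rho_{k+1})$, bound the first piece by $0$ via $\rho_{k+2}\le\rho_{k+1}$ and $f_{k+1}\ge f_{\min}$, and identify the second piece with the Armijo decrease of $P^{\rho_{k+1}}$. Your write-up is slightly more explicit in justifying $\rho_{k+2}\le\rho_{k+1}$ from \eqref{eq3.5}--\eqref{eq3.6}, but the argument is the same.
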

	
	\begin{proof}
		By the definition of $P^\rho(x)$ and $\phi(x,\rho)$ (recall equations \eqref{eq2.4} and \eqref{eq5.1}),
		we have that
		\[
		\phi(x_{k+1},\rho_{k+1})-\phi(x_{k},\rho_{k+1})
		=P^{\rho_{k+1}}(x_{k+1})-P^{\rho_{k+1}}(x_{k}),
		\]
		and it follows from \eqref{eq3.7} that
		\[
		\phi(x_{k+1},\rho_{k+1})-\phi(x_{k},\rho_{k+1})\le-\eta\a_k\D l_k^{\rho_{k+1}}(d_k).
		\]
		Moreover, since $\rho_{k+2}\le\rho_{k+1}$, $f_{k+1}-f_{\min}\ge0$, we have that
		\begin{align*}
			\phi(x_{k+1},\rho_{k+2})
			&=\rho_{k+2}(f_{k+1}-f_{\min})+v_{k+1}\\
			&\le\rho_{k+1}(f_{k+1}-f_{\min})+v_{k+1}\\
			&=\phi(x_{k+1},\rho_{k+1}).
		\end{align*}
		This allows us to conclude that
		\[
		\phi(x_{k+1},\rho_{k+2})-\phi(x_{k},\rho_{k+1})\le-\eta\a_k\D l_k^{\rho_{k+1}}(d_k).
		\]
		Finally, as $\D l_k^{\rho_{k+1}}(d_k)\ge0$, we can establish that the sequence $\{\phi(x_k,\rho_{k+1})\}$ decreases monotonically.
	\end{proof}
	
	The following two lemmas illustrate that the reductions in $l_k^{\rho_{k+1}}(d_k)$, $l_k^v(d_k)$,
	$l_k^v(d_k^{fea})$, and the norms of $d_k^{fea}$ and $d_k$ tend to zero in the limit.
	
	\begin{Lemma}
		\label{lem5.5}
		The following limits hold:
		\[
		\lim_{k\to\infty}\D l_k^{\rho_{k+1}}(d_k)=\lim_{k\to\infty}\D l_k^v(d_k)=\lim_{k\to\infty}\D
		l_k^v(d_k^{fea})=0.
		\]
	\end{Lemma}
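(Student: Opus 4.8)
The plan is to combine the monotone-decrease property of the shifted penalty function from Lemma \ref{lem5.4} with the lower bound on the step length from Lemma \ref{lem5.3}, and then propagate the resulting limit down the chain of inequalities \eqref{eq4.6}.

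First I would note that the sequence $\{\phi(x_k,\rho_{k+1})\}$ is bounded below. Indeed, $\rho_{k+1}>0$ by Theorem \ref{thm4.5}, while $f(x_k)-f_{\min}\ge0$ by the definition $f_{\min}=\inf_{x\in\Om}f(x)$ together with Assumption A2, and $v_k\ge0$; hence $\phi(x_k,\rho_{k+1})\ge0$ for every $k$. Since Lemma \ref{lem5.4} shows this sequence is monotonically decreasing, it converges. Summing the inequality of Lemma \ref{lem5.4} over $k=0,1,\dots,N$ telescopes to
\[
\sum_{k=0}^{N}\eta\a_k\D l_k^{\rho_{k+1}}(d_k)\le\phi(x_0,\rho_1)-\phi(x_{N+1},\rho_{N+2})\le\phi(x_0,\rho_1)<\infty,
\]
so the nonnegative series $\sum_{k}\a_k\D l_k^{\rho_{k+1}}(d_k)$ converges, and in particular $\a_k\D l_k^{\rho_{k+1}}(d_k)\to0$ as $k\to\infty$.

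Next I would invoke Lemma \ref{lem5.3}, which provides a constant $b_\a>0$ with $\a_k\ge b_\a\D l_k^{\rho_{k+1}}(d_k)$; recalling that $\D l_k^{\rho_{k+1}}(d_k)\ge0$ by Lemma \ref{lem4.4}, this gives
\[
b_\a\bigl(\D l_k^{\rho_{k+1}}(d_k)\bigr)^2\le\a_k\D l_k^{\rho_{k+1}}(d_k)\longrightarrow0,
\]
whence $\D l_k^{\rho_{k+1}}(d_k)\to0$. Finally, the chain $0\le\e\D l_k^v(d_k^{fea})\le\e\D l_k^v(d_k)\le\D l_k^{\rho_{k+1}}(d_k)$ supplied by \eqref{eq4.6} in Lemma \ref{lem4.4} forces $\D l_k^v(d_k)\to0$ and $\D l_k^v(d_k^{fea})\to0$ simultaneously, which is the claim.

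There is no genuinely hard analytic step here; the statement is a bookkeeping consequence of the earlier lemmas. The only points that deserve care are (i) checking that $\phi$ is bounded below along the iterates, which rests on the compactness of $\Om$ in Assumption A2 guaranteeing $f_{\min}>-\infty$, and (ii) setting up the telescoping sum with the right indices so that the bound $\phi(x_0,\rho_1)$ is finite and independent of $N$, allowing the passage to the convergent series.
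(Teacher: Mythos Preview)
Your proof is correct and follows essentially the same approach as the paper: both combine Lemma \ref{lem5.4}'s decrease $\phi(x_{k+1},\rho_{k+2})-\phi(x_k,\rho_{k+1})\le-\eta\a_k\D l_k^{\rho_{k+1}}(d_k)$ with Lemma \ref{lem5.3}'s bound $\a_k\ge b_\a\D l_k^{\rho_{k+1}}(d_k)$ and the nonnegativity $\phi\ge0$ to force $\D l_k^{\rho_{k+1}}(d_k)\to0$, then read off the other two limits from \eqref{eq4.6}. The only cosmetic difference is that the paper phrases the first step as a contradiction (a subsequence bounded below by $\tau$ would drive $\phi\to-\infty$) whereas you telescope to a convergent series; the underlying mechanism is identical.
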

	
	\begin{proof}
		We suppose, by contradiction, that $\D l_k^{\rho_{k+1}}(d_k)$ does not converge to $0$. Then,
		there exist a constant $\tau>0$ and an infinite index set $\m{K}$ such that
		\[
		\lvert\D l_k^{\rho_{k+1}}(d_k)\rvert\ge\tau, \quad k\in\m{K}.
		\]
		Then, by Lemma \ref{lem5.3} and Lemma \ref{lem5.4}, this would implies that
		\[
		\phi(x_{k+1},\rho_{k+2})-\phi(x_k,\rho_{k+1})\le-\eta b_\a\tau^2, \quad k\in\m{K}.
		\]
		Since
		\[
		\phi(x_{k+1},\rho_{k+2})-\phi(x_k,\rho_{k+1})\le0, \quad k\notin\m{K},
		\]
		we deduce that
		\[
		\lim_{k\to\infty}\phi(x_k,\rho_{k+1})\le\phi(x_0,\rho_1)-\sum_{k\in\m{K}}\eta b_\a\tau^2=-\infty,
		\]
		which is impossible since $\phi(x_k,\rho_{k+1})\ge0$ holds for all $k$. Hence, we can conclude that
		\be
		\label{eq5.2}
		\lim_{k\to\infty}\D l_k^{\rho_{k+1}}(d_k)=0,
		\ee
		and remaining proof follows from \eqref{eq4.6} and \eqref{eq5.2}.
	\end{proof}
	
	\begin{Lemma}
		\label{lem5.6}
		The following limits hold:
		\[
		\lim_{k\to\infty}d_k^{fea}=\lim_{k\to\infty}d_k=0.
		\]
	\end{Lemma}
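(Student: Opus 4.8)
The plan is to treat the two sequences separately. The convergence $d_k^{fea}\to 0$ will come almost for free from the limit $\D l_k^v(d_k^{fea})\to 0$ already recorded in Lemma \ref{lem5.5}, together with the coercivity estimate \eqref{eq4.2}. For $\{d_k\}$ I would instead exploit that $d_k^{fea}$ is a feasible point of \eqref{eq3.3} (it satisfies exactly the constraints $h_k+Dh(x_k)d=r_k-s_k$ and $G_k+DG(x_k)d\preceq t_kI_m$, as already used in the proof of Lemma \ref{lem5.2}), while $d_k$ minimizes the objective of \eqref{eq3.3}; comparing objective values then controls $\tfrac12 d_k^TB_kd_k$ once the linear term $\rho_kg_k^Td_k$ has been shown to vanish.

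First I would handle $\{d_k^{fea}\}$. From inequality \eqref{eq4.2} in the proof of Lemma \ref{lem4.2}(b) and Assumption A3,
\[
\D l_k^v(d_k^{fea})\ \ge\ \frac{1}{2}(d_k^{fea})^TB_k^{fea}d_k^{fea}\ \ge\ \frac{1}{2}b_1\|d_k^{fea}\|^2\ \ge\ 0 .
\]
Since Lemma \ref{lem5.5} gives $\D l_k^v(d_k^{fea})\to 0$, this forces $\|d_k^{fea}\|\to 0$.

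Next I would turn to $\{d_k\}$, for which the key preliminary step is to prove $\rho_kg_k^Td_k\to 0$. By the definition of $\D l_k^{\rho}(\cdot)$ in Section \ref{sect2} we have $\D l_k^{\rho_{k+1}}(d_k)=-\rho_{k+1}g_k^Td_k+\D l_k^v(d_k)$, and since both $\D l_k^{\rho_{k+1}}(d_k)\to 0$ and $\D l_k^v(d_k)\to 0$ by Lemma \ref{lem5.5}, it follows that $\rho_{k+1}g_k^Td_k\to 0$. Moreover the updates \eqref{eq3.5}--\eqref{eq3.6} make $\{\rho_k\}$ nonincreasing and, by Theorem \ref{thm4.5}, positive, hence convergent; so $\rho_k-\rho_{k+1}\to 0$, and combining this with the boundedness of $\{g_k\}$ (Assumptions A1, A2) and of $\{d_k\}$ (Lemma \ref{lem5.2}) gives $(\rho_k-\rho_{k+1})g_k^Td_k\to 0$, whence $\rho_kg_k^Td_k=\rho_{k+1}g_k^Td_k+(\rho_k-\rho_{k+1})g_k^Td_k\to 0$. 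Comparing the objective of \eqref{eq3.3} at its minimizer $d_k$ with its value at the feasible point $d_k^{fea}$ yields $\rho_kg_k^Td_k+\tfrac12 d_k^TB_kd_k\le\rho_kg_k^Td_k^{fea}+\tfrac12 (d_k^{fea})^TB_kd_k^{fea}$, so that by Assumption A3
\[
\frac{1}{2}b_1\|d_k\|^2\ \le\ \frac{1}{2}d_k^TB_kd_k\ \le\ \rho_kg_k^Td_k^{fea}-\rho_kg_k^Td_k+\frac{1}{2}b_2\|d_k^{fea}\|^2 .
\]
The right-hand side tends to $0$: $\rho_kg_k^Td_k^{fea}\to 0$ since $d_k^{fea}\to 0$, $\rho_k\le\rho_0$ and $\{g_k\}$ is bounded; $\rho_kg_k^Td_k\to 0$ by the previous step; and $\|d_k^{fea}\|\to 0$. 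Hence $\|d_k\|\to 0$.

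The hard part will be the linear term $\rho_kg_k^Td_k$ appearing in the comparison inequality: because $\rho_k$ is not known a priori to stay bounded away from zero, one cannot simply cancel $g_k$, and one must instead show that the whole product $\rho_kg_k^Td_k$ vanishes. This is exactly where the telescoping observation $\rho_k-\rho_{k+1}\to 0$, together with the identity linking $\D l_k^{\rho_{k+1}}(d_k)$, $\D l_k^v(d_k)$ and $\rho_{k+1}g_k^Td_k$, carries the argument; everything else is routine bookkeeping using Assumption A, Lemma \ref{lem5.2} and Lemma \ref{lem5.5}.
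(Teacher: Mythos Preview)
Your proof is correct and follows the same overall architecture as the paper's (use \eqref{eq4.2} and Lemma \ref{lem5.5} for $d_k^{fea}$; for $d_k$, first show $\rho_kg_k^Td_k\to 0$ and then compare the value of the objective of \eqref{eq3.3} at $d_k$ with its value at the feasible point $d_k^{fea}$). Two streamlinings distinguish your argument from the paper's. First, the paper proves both limits by contradiction, whereas you derive them directly from the inequality $\D l_k^v(d_k^{fea})\ge\tfrac12 b_1\|d_k^{fea}\|^2$ and from the comparison inequality for \eqref{eq3.3}; this is shorter and loses nothing. Second, and more interestingly, to obtain $\rho_kg_k^Td_k\to 0$ the paper splits into the cases $\lim\rho_k=0$ and $\lim\rho_k>0$ (using in the second case that the update rules \eqref{eq3.5}--\eqref{eq3.6} force $\rho_{k+1}=\rho_k$ eventually), while you observe uniformly that $\{\rho_k\}$ is nonincreasing and positive, hence convergent, so $\rho_k-\rho_{k+1}\to 0$; combined with $\rho_{k+1}g_k^Td_k=\D l_k^v(d_k)-\D l_k^{\rho_{k+1}}(d_k)\to 0$ and the boundedness of $g_k^Td_k$, this gives the claim without any case distinction. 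Your route is a little cleaner; the paper's route makes explicit the dichotomy between the two regimes of $\rho_k$, which is thematically useful later (Theorems \ref{thm5.9}--\ref{thm5.13}) but not needed here.
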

	
	\begin{proof}
		First, we suppose by contradiction that $\lim_{k\to\infty}d_k^{fea}\neq0$. This implies that there exist
		a constant $\tau>0$ and an infinite index set $\m{K}$ such that
		\[
		\|d_k^{fea}\|\ge\tau, \quad k\in\m{K}.
		\]
		By Lemma \ref{lem5.5}, there exists an index $k_0$ such that
		\[
		\D l_k^v(d_k^{fea})\le\dfrac{1}{4}b_1\tau^2, \quad k\ge k_0, \quad k\in\m{K}.
		\]
		Then, for $k\in\m{K}$, we have
		\begin{align*}
			&\quad l_k^v(d_k^{fea})+\dfrac{1}{2}(d_k^{fea})^TB_k^{fea}d_k^{fea}\\
			&=l_k^v(0)-\D l_k^v(d_k^{fea})+\dfrac{1}{2}(d_k^{fea})^TB_k^{fea}d_k^{fea}\\
			&\ge l_k^v(0)-\dfrac{1}{4}b_1\tau^2+\dfrac{1}{2}b_1\tau^2\\
			&>l_k^v(0),
		\end{align*}
		which is a contradiction. This is because $0$ is a feasible point for the problem \eqref{eq3.1}, while $d_k^{fea}\neq0$ for $k\in\m{K}$ is a global minimizer of it. Consequently, we conclude that $\lim_{k\to\infty}d_k^{fea}=0$.
		
		Let's proceed to prove that $\lim_{k\to\infty}d_k=0$. To do this, we first establish that
		\be
		\label{eq5.3}
		\lim_{k\to\infty}\rho_kg_k^Td_k=0.
		\ee
		If $\lim_{k\to\infty}\rho_k=0$, then \eqref{eq5.3} follows from the boundedness of the sequences $\{g_k\}$ and $\{d_k\}$.
		
		Now, consider the case where $\lim_{k\to\infty}\rho_k>0$. By the update rule of $\rho$ (recall \eqref{eq3.5} and \eqref{eq3.6}), for all
		$k$ sufficiently large, we must have that
		\[
		\rho_{k+1}=\rho'_k=\rho_k.
		\]
		Then, by Lemma \ref{lem5.5}, we have
		\[
		0=\lim_{k\to\infty}(\D l_k^v(d_k)-\D l_k^{\rho_{k+1}}(d_k))
		=\lim_{k\to\infty}\rho_{k+1}g_k^Td_k
		=\lim_{k\to\infty}\rho_kg_k^Td_k.
		\]
		We then suppose, by contradiction, that there exist a constant $\tau>0$ and an infinite index set $\m{K}$ such that
		\[
		\|d_k\|\ge\tau, \quad k\in\m{K}.
		\]
		Then, $d_k^TB_kd_k\ge b_2\tau^2$. Moreover, by $\lim_{k\to\infty}d_k^{fea}=0$ and \eqref{eq5.3}, there exists an index $k_0$ such that
		\[
		\rho_kg_k^Td_k^{fea}<\dfrac{1}{8}b_1\tau^2,
		\quad \|d_k^{fea}\|<\dfrac{b_1\tau^2}{4b_2},
		\quad \rho_kg_k^Td_k>-\dfrac{1}{4}b_1\tau^2
		\]
		hold for $k\ge k_0$, $k\in\m{K}$. Hence, we have
		\[
		\rho_kg_k^Td_k^{fea}+\dfrac{1}{2}(d_k^{fea})^TB_kd_k^{fea}
		<\dfrac{1}{8}b_1\tau^2+\dfrac{1}{2}b_2\|d_k^{fea}\|^2
		<\dfrac{1}{4}b_1\tau^2.
		\]
		This implies that
		\[
		\rho_kg_k^Td_k^{fea}+\dfrac{1}{2}(d_k^{fea})^TB_kd_k^{fea}
		<\dfrac{1}{4}b_2\tau^2
		=-\dfrac{1}{4}b_2\tau^2+\dfrac{1}{2}b_2\tau^2
		<\rho_kg_k^Td_k+\dfrac{1}{2}d_k^TB_kd_k,
		\]
		which contradicts the fact that $d_k$ is the global minimizer for \eqref{eq3.3}, while
		$d_k^{fea}$ is a feasible point for it. Hence, we conclude that $\lim_{k\to\infty}d_k=0$.
	\end{proof}
	
	Before giving the results on global convergence, we establish the feasibility for limit points of the sequence $\{x_k\}$ under the case that $\lim_{k\to\infty}\rho_k=0$.
	
	\begin{Lemma}
		\label{lem5.7}
		Let
		\[
		\m{K}_\rho:=\{ k \mid \rho_{k+1}<\rho_k\}
		\]
		be a subset of the iterations during which $\rho_k$ was decreased. In the case that
		$\lim_{k\to\infty}\rho_k=0$, the set of accumulation points for the sequence $\{x_k\}$ is either exclusively constituted of feasible points or entirely comprised of infeasible ones.
	\end{Lemma}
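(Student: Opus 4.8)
The plan is to establish the stronger fact that the \emph{entire} constraint-violation sequence $\{v_k\}$ converges, and then to read off the claimed dichotomy from the continuity of $v(\cdot)$. The set $\m{K}_\rho$ only records that $\rho_k$ is decreased infinitely often (which is automatic once $\rho_k\to0$); it plays no role in the argument below.

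First I would invoke Lemma \ref{lem5.4}: the sequence $\{\phi(x_k,\rho_{k+1})\}$ is monotonically decreasing. Since $\rho_{k+1}>0$, and since $x_k\in\Omega$ forces $f_k\ge f_{\min}$, while $v_k\ge0$, every term $\phi(x_k,\rho_{k+1})=\rho_{k+1}(f_k-f_{\min})+v_k$ is nonnegative. A monotone nonincreasing sequence bounded below therefore converges, say to some $\phi^*\ge0$.

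Next, Assumptions A1 and A2 guarantee that $f$ is continuous on the compact set $\Omega$, so $\{f_k-f_{\min}\}$ is bounded, say by $M\ge0$. Because $\lim_{k\to\infty}\rho_k=0$, we get $0\le\rho_{k+1}(f_k-f_{\min})\le M\rho_{k+1}\to0$, and hence
\[
v_k=\phi(x_k,\rho_{k+1})-\rho_{k+1}(f_k-f_{\min})\to\phi^*.
\]
Thus the whole sequence $\{v_k\}$ converges to $\phi^*$, not merely along a subsequence. Finally, let $x^*$ be any accumulation point of $\{x_k\}$ and take a subsequence $x_{k_j}\to x^*$; continuity of $v(\cdot)$ (a consequence of A1, since $h$, $G$, $\lambda_1(\cdot)$, $\|\cdot\|_1$ and $[\cdot]_+$ are continuous) yields $v(x^*)=\lim_{j\to\infty}v(x_{k_j})=\phi^*$. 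Hence every accumulation point of $\{x_k\}$ has the same constraint violation $\phi^*$: if $\phi^*=0$ all of them are feasible, and if $\phi^*>0$ all of them are infeasible, which is exactly the asserted alternative.

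I do not expect a genuinely hard step here. The only point that requires care is the observation in the second and third paragraphs that the monotonicity of the shifted penalty function combined with $\rho_k\to0$ forces the limit of $\{v_k\}$ itself to exist (rather than only of a subsequence), after which the continuity of $v(\cdot)$ closes the argument immediately.
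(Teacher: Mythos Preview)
Your proof is correct and rests on the same ingredients as the paper's: the monotonicity of $\{\phi(x_k,\rho_{k+1})\}$ from Lemma~\ref{lem5.4}, its nonnegativity, the boundedness of $f_k-f_{\min}$ on $\Omega$, and the vanishing of the $\rho_{k+1}(f_k-f_{\min})$ term once $\rho_k\to0$. The paper, however, argues by contradiction: it assumes one subsequence converges to an infeasible point with $v^{fea}>0$ and another to a feasible point, then exhibits indices $k_1>k_2$ with $\phi(x_{k_1},\rho_{k_1+1})\ge\tfrac12 v^{fea}>\phi(x_{k_2},\rho_{k_2+1})$, contradicting monotonicity. Your route is a touch more direct and in fact yields the stronger conclusion that the \emph{full} sequence $\{v_k\}$ converges (to $\phi^*$), not merely that all its limit points lie on the same side of zero; the paper's argument stops short of stating this. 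Both approaches are equally short, so neither buys a real simplification over the other, but your formulation makes the role of the shifted penalty function $\phi$ completely transparent.
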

	
	\begin{proof}
		Here we suppose, by contradiction, that there exist two infinite index sets $\m{K}^{fea}$ and
		$\m{K}^{opt}$ such that
		\[
		\lim_{k\to\infty}x_k:=x^{fea}, \quad \lim_{k\to\infty}v_k:=v^{fea}>0, \quad k\in\m{K}^{fea},
		\]
		\[
		\lim_{k\to\infty}x_k:=x^{opt}, \quad \lim_{k\to\infty}v_k=0, \quad k\in\m{K}^{opt}.
		\]
		Proceeding, we analyze the situation in two distinct aspects. Firstly, for indices $k$ belonging to the set $\m{K}^{fea}$ that are sufficiently large, we have that
		\[
		\rho_{k+1}(f_k-f_{\min})\ge0, \quad v_k\ge\dfrac{1}{2}v^{fea},
		\]
		which implies that $\phi(x_k,\rho_{k+1})\ge0.5v^{fea}$, $k\in\m{K}^{fea}$. Secondly, focusing on indices $k$ within the set $\m{K}^{opt}$ that are sufficiently large, since $\lim_{k\to\infty}\rho_k=0$, we have that
		\[
		\rho_{k+1}(f_k-f_{\min})<\dfrac{1}{4}v^{fea}, \quad v_k<\dfrac{1}{4}v^{fea},
		\]
		which implies that $\phi(x_k,\rho_{k+1})<0.5v^{fea}$, $k\in\m{K}^{opt}$. Since both $\m{K}^{fea}$ and
		$\m{K}^{opt}$ are infinite sets, we can deduce the existence of an index pair $k_1\in\m{K}^{fea}$, $k_2\in\m{K}^{opt}$, with $k_1>k_2$ such that
		\[
		\phi(x_{k_1},\rho_{k_1+1})\ge0.5v^{fea}>\phi(x_{k_2},\rho_{k_2+1}),
		\]
		this contradicts that the sequence $\{\phi(x_k,\rho_{k+1})\}$ decreases monotonically. In conclusion, all limit points of the sequence $\{x_k\}$ must either belong to the feasible set or to the infeasible set.
	\end{proof}
	
	Now we present our first theorem within this section, which states that every limit point of an
	infinite sequence generated by Algorithm \ref{alg3.1} is endowed with first-order optimality for the problem \eqref{eq2.1}.
	
	\begin{Theorem}
		\label{thm5.8}
		The following limit holds:
		\[
		\lim_{k\to\infty}R_k^{fea}=0.
		\]
		Therefore, all limit points of $\{(x_k,\bar\mu_{k+1},\bar Y_{k+1})\}$ are first-order optimal
		for \eqref{eq2.1}.
	\end{Theorem}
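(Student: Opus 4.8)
The plan is to establish the two assertions in turn: first that $R_k^{fea}\to0$ by showing each of the five nonnegative terms defining $R^{fea}(x_k,\bar\mu_{k+1},\bar Y_{k+1})$ vanishes in the limit, and then to transfer this to accumulation points by continuity. Before the estimates I would collect the uniform bounds that are already available. By Lemma \ref{lem5.6}, $d_k^{fea}\to0$; by Assumptions A1--A2 there is a constant $C>0$ with $\|Dh(x_k)d_k^{fea}\|_\infty\le C\|d_k^{fea}\|$ and $\|DG(x_k)d_k^{fea}\|_F\le C\|d_k^{fea}\|$ for all $k$, so both quantities tend to $0$; by Assumption A3, $\|B_k^{fea}\|_2\le b_2$; and by Theorem \ref{thm4.5}, $-e_l\le\bar\mu_{k+1}\le e_l$, $\bar Y_{k+1}\succeq0$ and $\text{tr}(\bar Y_{k+1})\le1$, whence $\|e_l\pm\bar\mu_{k+1}\|_\infty\le2$, $|1-\text{tr}(\bar Y_{k+1})|\le1$ and $\|\bar Y_{k+1}\|_2\le1$. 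I abbreviate $\tilde h_k:=h_k+Dh(x_k)d_k^{fea}$ and $\tilde G_k:=G_k+DG(x_k)d_k^{fea}$ and work throughout from the stationarity system \eqref{eq4.3} of subproblem \eqref{eq3.2}.

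Next I would dispatch the first four terms. From the first line of \eqref{eq4.3}, $\|\na_xF(x_k,0,\bar\mu_{k+1},\bar Y_{k+1})\|_\infty=\|B_k^{fea}d_k^{fea}\|_\infty\le b_2\|d_k^{fea}\|\to0$. Since $t\mapsto[t]_+$ is $1$-Lipschitz, $\|[h_k]_+-[\tilde h_k]_+\|_\infty\le\|Dh(x_k)d_k^{fea}\|_\infty\to0$; combining this with the second line of \eqref{eq4.3} (which forces $(e_l-\bar\mu_{k+1})\circ[\tilde h_k]_+=0$) and $\|e_l-\bar\mu_{k+1}\|_\infty\le2$ gives $\|(e_l-\bar\mu_{k+1})\circ[h_k]_+\|_\infty\le2\|[h_k]_+-[\tilde h_k]_+\|_\infty\to0$, and the $[h_k]_-$ term is handled symmetrically. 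For the fourth term, Weyl's inequality yields $|\lambda_1(G_k)-\lambda_1(\tilde G_k)|\le\|DG(x_k)d_k^{fea}\|_2$, hence $\big|[\lambda_1(G_k)]_+-[\lambda_1(\tilde G_k)]_+\big|\le\|DG(x_k)d_k^{fea}\|_2\to0$; together with the fourth line of \eqref{eq4.3} (which forces $(1-\text{tr}(\bar Y_{k+1}))[\lambda_1(\tilde G_k)]_+=0$) and $|1-\text{tr}(\bar Y_{k+1})|\le1$, this gives $|1-\text{tr}(\bar Y_{k+1})|[\lambda_1(G_k)]_+\to0$.

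The main obstacle is the fifth term, because the last line of \eqref{eq4.3} only provides the scalar complementarity $\langle\bar Y_{k+1},\tilde G_k-[\lambda_1(\tilde G_k)]_+I_m\rangle=0$, whereas the residual asks for the Frobenius norm of the matrix product $\bar Y_{k+1}(G_k-[\lambda_1(G_k)]_+I_m)$. I would bridge this in two steps. First, since $\tilde G_k\preceq\lambda_1(\tilde G_k)I_m\preceq[\lambda_1(\tilde G_k)]_+I_m$, the matrix $[\lambda_1(\tilde G_k)]_+I_m-\tilde G_k$ is positive semidefinite, and the vanishing trace inner product of two positive semidefinite matrices forces their product to vanish, so $\bar Y_{k+1}\big(\tilde G_k-[\lambda_1(\tilde G_k)]_+I_m\big)=0$. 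Second, subtracting this zero matrix and using $\|AB\|_F\le\|A\|_2\|B\|_F$ with $\|\bar Y_{k+1}\|_2\le1$,
\[
\|\bar Y_{k+1}(G_k-[\lambda_1(G_k)]_+I_m)\|_F\le\|DG(x_k)d_k^{fea}\|_F+\sqrt{m}\,\big|[\lambda_1(G_k)]_+-[\lambda_1(\tilde G_k)]_+\big|\to0 .
\]
Summing the five bounds yields $\lim_{k\to\infty}R_k^{fea}=0$.

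Finally I would pass to limit points. Since $\{x_k\}$ lies in the compact set $\Omega$ and $\{(\bar\mu_{k+1},\bar Y_{k+1})\}$ is bounded, the sequence $\{(x_k,\bar\mu_{k+1},\bar Y_{k+1})\}$ has accumulation points; let $(x^*,\mu^*,Y^*)$ be one, attained along a subsequence $(x_{k_j},\bar\mu_{k_j+1},\bar Y_{k_j+1})\to(x^*,\mu^*,Y^*)$. The map $R^{fea}$ is continuous in $(x,\mu,Y)$ under Assumption A1 (the largest eigenvalue, $[\cdot]_+$, the adjoint operators, and $\|\cdot\|_F$ are all continuous), so $R^{fea}(x^*,\mu^*,Y^*)=\lim_jR_{k_j}^{fea}=0$; and taking limits in $-e_l\le\bar\mu_{k+1}\le e_l$, $\bar Y_{k+1}\succeq0$, $\text{tr}(\bar Y_{k+1})\le1$ retains the last line of \eqref{eq2.3}. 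Hence $(x^*,\mu^*,Y^*)$ satisfies the whole system \eqref{eq2.3}, i.e.\ it is a first-order optimal point of \eqref{eq2.1}, as claimed.
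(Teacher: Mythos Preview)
Your proof is correct and follows precisely the route sketched in the paper, whose own proof is the single sentence ``use the definition of $R_k^{fea}$, the KKT system \eqref{eq4.3}, and $\lim_{k\to\infty}d_k^{fea}=0$.'' You have faithfully expanded that sketch, supplying in particular the one step the paper leaves implicit: upgrading the scalar complementarity $\langle\bar Y_{k+1},\tilde G_k-[\lambda_1(\tilde G_k)]_+I_m\rangle=0$ to the matrix identity $\bar Y_{k+1}(\tilde G_k-[\lambda_1(\tilde G_k)]_+I_m)=0$ via the standard fact that two positive semidefinite matrices with vanishing trace inner product have zero product.
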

	
	\begin{proof}
		The proof is obtained by employing the definition of $R_k^{fea}$ as presented in Section \ref{sect3}, coupled with the utilization of \eqref{eq4.3} and $\lim_{k\to\infty}d_k^{fea}=0$.
	\end{proof}
	
	We proceed to establish the proof that under the condition of the penalty parameter $\rho_k$ remaining consistently separated from zero, every accumulation point within the feasible set of the sequence $\{x_k\}$ corresponds to a KKT point.
	
	\begin{Theorem}
		\label{thm5.9}
		If $\lim_{k\to\infty}\rho_k=\rho^*>0$ and $\lim_{k\to\infty}v_k=0$, then
		$\lim_{k\to\infty}R_k^{opt}=0$.
		Thus, every limit point $(x^*,\hat\mu^*/\rho^*,\hat Y^*/\rho^*)$ of the sequence
		$\{(x_k,\hat\mu_{k+1}/\rho_k,\hat Y_{k+1}/\rho_k)\}$ is a KKT point for the problem \eqref{eq1.1}.
	\end{Theorem}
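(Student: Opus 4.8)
The plan is to first establish that the optimality-phase multipliers $\{\hat\mu_{k+1}\}$ and $\{\hat Y_{k+1}\}$ are bounded, then to read off $R_k^{opt}\to0$ from the first-order optimality system of \eqref{eq3.3} together with $d_k\to0$ and $(r_k,s_k,t_k)\to0$, and finally to pass to the limit along a convergent subsequence of $\{(x_k,\hat\mu_{k+1}/\rho_k,\hat Y_{k+1}/\rho_k)\}$. For the boundedness, note that $\rho_{k+1}\le\rho'_k\le\rho_k$ for every $k$ by \eqref{eq3.5}--\eqref{eq3.6}, while $\rho'_k\le\delta\rho_k$ whenever \eqref{eq3.4} holds. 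Hence \eqref{eq3.4} can hold only for finitely many $k$: otherwise $\rho_k$ would be multiplied by a factor at most $\delta<1$ infinitely often while being non-increasing throughout, forcing $\rho_k\to0$ and contradicting $\rho_k\to\rho^*>0$. Consequently, for all sufficiently large $k$ the second alternative in \eqref{eq3.4} fails, i.e. $\rho_k(\|\hat\mu_{k+1}\|_\infty+\text{tr}(\hat Y_{k+1}))\le1$; since $\rho_k\ge\rho^*/2$ eventually, this yields $\|\hat\mu_{k+1}\|_\infty+\text{tr}(\hat Y_{k+1})\le2/\rho^*$, and because $\hat Y_{k+1}\succeq0$ gives $\|\hat Y_{k+1}\|_F\le\text{tr}(\hat Y_{k+1})$, both sequences are bounded.

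Next, $v_k\to0$ together with $0\le e_l^T(r_k+s_k)+t_k=l_k^v(d_k^{fea})\le l_k^v(0)=v_k$ and $(r_k,s_k,t_k)\ge0$ forces $(r_k,s_k,t_k)\to0$, and $d_k\to0$ by Lemma~\ref{lem5.6}. The stationarity equation of \eqref{eq3.3} (the analogue of the first line of \eqref{eq4.4} with right-hand sides $r_k-s_k$ and $t_kI_m$) reads $\nabla_xF(x_k,\rho_k,\hat\mu_{k+1},\hat Y_{k+1})=-B_kd_k$, so $\|\nabla_xF(x_k,\rho_k,\hat\mu_{k+1},\hat Y_{k+1})\|_\infty\le b_2\|d_k\|\to0$ by Assumption~A3. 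For the complementarity term, $\hat Y_{k+1}\succeq0$ and $t_kI_m-(G_k+DG(x_k)d_k)\succeq0$ with vanishing inner product force the matrix identity $\hat Y_{k+1}\bigl(G_k+DG(x_k)d_k-t_kI_m\bigr)=0$; rearranging gives $\hat Y_{k+1}G_k=\hat Y_{k+1}\bigl(t_kI_m-DG(x_k)d_k\bigr)$, whence $\|\hat Y_{k+1}G_k\|_F\le\|\hat Y_{k+1}\|_F\bigl(\sqrt{m}\,t_k+\|DG(x_k)\|\,\|d_k\|\bigr)\to0$, using the bound on $\|\hat Y_{k+1}\|_F$, the boundedness of $DG$ on the compact set $\Om$ (Assumptions~A1--A2), and $t_k,d_k\to0$. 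Adding the two estimates gives $R_k^{opt}\to0$.

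Finally, take any limit point $(x^*,\hat\mu^*/\rho^*,\hat Y^*/\rho^*)$ and a subsequence along which $x_k\to x^*$, $\hat\mu_{k+1}\to\hat\mu^*$, $\hat Y_{k+1}\to\hat Y^*$ (such subsequences exist by the boundedness above and compactness of $\Om$). Continuity of $v$ and $v_k\to0$ give $v(x^*)=0$, i.e. $h(x^*)=0$ and $G(x^*)\preceq0$. Dividing the stationarity relation by $\rho_k$ and letting $k\to\infty$ gives $g(x^*)+Dh(x^*)^T(\hat\mu^*/\rho^*)+DG(x^*)^*(\hat Y^*/\rho^*)=0$; letting $k\to\infty$ in $\hat Y_{k+1}G_k\to0$ gives $\hat Y^*G(x^*)=0$, hence $\langle\hat Y^*/\rho^*,G(x^*)\rangle=0$; and $\hat Y^*/\rho^*\succeq0$ as a limit of positive semidefinite matrices. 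These are exactly the KKT conditions of \eqref{eq1.1}.

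The main obstacle is the very first step: bounding $\hat\mu_{k+1}$ and $\hat Y_{k+1}$ without any constraint qualification. It is unlocked by the observation that a strictly positive limit of $\rho_k$ is incompatible with \eqref{eq3.4} holding infinitely often, which is precisely the inequality that controls these multipliers. Everything afterward is routine, the only minor technicality being the passage from $\langle\hat Y_{k+1},\,G_k+DG(x_k)d_k-t_kI_m\rangle=0$ to $\hat Y_{k+1}(G_k+DG(x_k)d_k-t_kI_m)=0$, which is the elementary fact that $A\succeq0$, $B\succeq0$, $\langle A,B\rangle=0$ imply $AB=0$.
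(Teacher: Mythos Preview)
Your proof is correct and follows essentially the same route as the paper: bound $\{(\hat\mu_{k+1},\hat Y_{k+1})\}$ by observing that \eqref{eq3.4} can hold only finitely often (else $\rho_k\to0$), then pass to the limit in the KKT system of \eqref{eq3.3} using $d_k\to0$ and $(r_k,s_k,t_k)\to0$. Your treatment of the complementarity term $\|\hat Y_{k+1}G_k\|_F$ via the identity $AB=0$ for $A\succeq0$, $B\preceq0$, $\langle A,B\rangle=0$ is in fact more careful than the paper's, which only argues that $\langle\hat Y_{k+1},G_k\rangle\to0$ and leaves the Frobenius-norm statement implicit.
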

	
	\begin{proof}
		To begin, we establish that the sequence $\{(\hat\mu_{k+1},\hat Y_{k+1})\}$ is bounded. Indeed, if the
		sequence $\{(\hat\mu_{k+1},\hat Y_{k+1})\}$ were unbounded, since
		$\lim_{k\to\infty}\rho_k=\rho^*>0$, then there exists an infinite index set $\m{K}$ such that
		\eqref{eq3.4} holds for $k\in\m{K}$ sufficiently large. Consequently, it follows from \eqref{eq3.5} that
		$\rho_{k+1}\le\rho_k'<\rho_k$ holds for $k\in\m{K}$ sufficiently large, and thus that
		$\lim_{k\to\infty}\rho_k=0$, which presents a contradiction. Thus, the sequence $\{(\hat\mu_{k+1},\hat Y_{k+1})\}$ is
		bounded.
		
		Considering \eqref{eq4.5}, we deduce that
		\[
		\lim_{k\to\infty}\na_xF(x_k,1,\hat\mu_{k+1}/\rho_k,\hat
		Y_{k+1}/\rho_k)=-\lim_{k\to\infty}\dfrac{B_kd_k}{\rho_k}=0.
		\]
		Then, it only remains to demonstrate that
		\[
		\lim_{k\to\infty}\langle\hat Y_{k+1},G_k\rangle=0.
		\]
		By the assumption that $\lim_{k\to\infty}v_k=0$, and referencing Lemma \ref{lem5.5} alongside
		\[
		\D l_k^v(d_k)=l_k^v(0)-l_k^v(d_k)=v_k-(e^T(r_k+s_k)+t_k),
		\]
		we have that
		\[
		\lim_{k\to\infty}\|r_k\|=\lim_{k\to\infty}\|s_k\|=\lim_{k\to\infty}t_k=0.
		\]
		The remainder of the theorem follows from the above limit properties and the complementarity condition of the problem \eqref{eq3.2}.
	\end{proof}
	
	We proceed to establish that if the penalty parameter tends to zero, then every feasible limit point of the sequence
	$\{x_k\}$ corresponds to an FJ point, where the Robinson's constraint qualification is not satisfied.
	
	\begin{Theorem}
		\label{thm5.10}
		Suppose that $\lim_{k\to\infty}\rho_k=0$ and $\lim_{k\to\infty}v_k=0$, with $\m{K}_\rho$ being the index set as defined in Lemma \ref{lem5.7}. Then, all accumulation points of $\{x_k\}_{\m{K}_\rho}$ are indicative of FJ points for the problem \eqref{eq1.1}, where the Robinson's constraint qualification fails.
	\end{Theorem}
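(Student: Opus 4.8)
The plan is to produce, at every accumulation point $x^*$ of $\{x_k\}_{\m{K}_\rho}$, a pair $(\mu^*,Y^*)\in\m{R}^l\times\m{S}_+^m$ with $(\mu^*,Y^*)\neq0$ such that
\[
Dh(x^*)^T\mu^*+DG(x^*)^*Y^*=0,\qquad \langle Y^*,G(x^*)\rangle=0,
\]
together with $h(x^*)=0$ and $G(x^*)\preceq0$. Such a pair settles the theorem at once: taking the objective multiplier $\rho=0$ makes $(0,\mu^*,Y^*)$ a Fritz--John triple for \eqref{eq1.1}, and it rules out Robinson's constraint qualification at $x^*$, because if $\bar d$ were the unit vector of Definition \ref{def5.1} then pairing the first identity with $\bar d$ gives $\langle Y^*,DG(x^*)\bar d\rangle=-(\mu^*)^TDh(x^*)\bar d=0$, whence $\langle Y^*,G(x^*)+DG(x^*)\bar d\rangle=0$; since $Y^*\succeq0$ and $G(x^*)+DG(x^*)\bar d\prec0$ this forces $Y^*=0$, and then $Dh(x^*)^T\mu^*=0$ together with $Dh(x^*)$ having full row rank forces $\mu^*=0$, a contradiction.

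First I would note that $\rho_k\to0$ makes $\m{K}_\rho$ infinite; I fix such an accumulation point $x^*$ and an infinite $\m{K}'\subseteq\m{K}_\rho$ with $x_k\to x^*$, and observe that $v_k\to0$ and the continuity of $v$ give $v(x^*)=0$, so $x^*$ is feasible. Since $\|\bar\mu_{k+1}\|_\infty\le1$ and $\text{tr}(\bar Y_{k+1})\le1$ by Theorem \ref{thm4.5}, we have $\rho_k(\|\bar\mu_{k+1}\|_\infty+\text{tr}(\bar Y_{k+1}))\le2\rho_k\to0$, so for all large $k\in\m{K}'$ the reduction of $\rho$ at step $k$ is caused either by (I) $\rho_k(\|\hat\mu_{k+1}\|_\infty+\text{tr}(\hat Y_{k+1}))>1$, or by (II) the failure of \eqref{eq3.4} (so that $\rho'_k=\rho_k$) together with $\D l_k^{\rho_k}(d_k)<\e\D l_k^v(d_k)$; if (I) fails for all large $k\in\m{K}'$, then (II) holds for all large $k\in\m{K}'$.

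In case (I) one has $\tau_k:=\|\hat\mu_{k+1}\|_\infty+\text{tr}(\hat Y_{k+1})>1/\rho_k\to\infty$. I would divide the optimality conditions of \eqref{eq3.3}, namely $\rho_kg_k+B_kd_k+Dh(x_k)^T\hat\mu_{k+1}+DG(x_k)^*\hat Y_{k+1}=0$ and $\langle\hat Y_{k+1},G_k+DG(x_k)d_k-t_kI_m\rangle=0$, by $1+\tau_k$; the normalized multipliers are bounded, so along a further subsequence they converge to some $(\mu^*,Y^*)$. Passing to the limit — using $d_k\to0$ (Lemma \ref{lem5.6}), $\rho_k\to0$, $0\le t_k\le l_k^v(d_k^{fea})\le v_k\to0$, and Assumption A — yields $Y^*\succeq0$ and the two displayed identities; moreover $\|\mu^*\|_\infty+\text{tr}(Y^*)=\lim_k\tau_k/(1+\tau_k)=1$, so $(\mu^*,Y^*)\neq0$ and we are done in this case.

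In case (II) I would first record that $\D l_k^v(d_k)>0$: if it were $0$, Lemma \ref{lem4.2} would give $d_k^{fea}=0$, so $d=0$ is feasible for \eqref{eq3.3}, and since $d_k\neq0$ the argument of Lemma \ref{lem4.4}(a) yields $g_k^Td_k<0$, whence $\D l_k^{\rho_k}(d_k)=-\rho_kg_k^Td_k>0=\e\D l_k^v(d_k)$, contradicting (II). Thus $0<(1-\e)\D l_k^v(d_k)<\rho_kg_k^Td_k$, in particular $d_k^{fea}\neq0$, so $v_k>0$. I would then show that Robinson's CQ must fail at $x^*$, after which a standard separation argument between $\ker Dh(x^*)$ and $\m{S}_{--}^m$ supplies the required nonzero $(\mu^*,Y^*)$. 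Indeed, if Robinson's CQ held at $x^*$, its stability along $x_k\to x^*$ would make the linearized system $h_k+Dh(x_k)d=0$, $G_k+DG(x_k)d\preceq0$ solvable by some $d$ with $\|d\|=O(v_k)$ and $l_k^v(d)=0$. Since $d_k^{fea}$ minimizes \eqref{eq3.1}, this forces $l_k^v(d_k^{fea})\le l_k^v(d_k^{fea})+\tfrac{1}{2}(d_k^{fea})^TB_k^{fea}d_k^{fea}\le l_k^v(d)+\tfrac{1}{2}d^TB_k^{fea}d=O(v_k^2)$, hence $l_k^v(d_k)\le l_k^v(d_k^{fea})=O(v_k^2)$ and $\D l_k^v(d_k)=v_k-l_k^v(d_k)\ge\tfrac{1}{2}v_k$ for large $k$. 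On the other hand, the optimality of $d_k$ in \eqref{eq3.3} against the feasible point $d_k^{fea}$ gives $\rho_kg_k^Td_k\le\rho_k\|g_k\|\|d_k^{fea}\|+\tfrac{1}{2}b_2\|d_k^{fea}\|^2=O(\rho_kv_k+v_k^2)=o(v_k)$, so $\rho_kg_k^Td_k<(1-\e)\D l_k^v(d_k)$ for large $k$ and therefore $\D l_k^{\rho_k}(d_k)\ge\e\D l_k^v(d_k)$. But then \eqref{eq3.6} with $\rho'_k=\rho_k$ leaves $\rho$ unchanged at step $k$, contradicting $k\in\m{K}_\rho$. The main obstacle is making this quantitative stability step — the bounds $\|d_k^{fea}\|=O(v_k)$ and $\D l_k^v(d_k)\ge\tfrac{1}{2}v_k$ near a Robinson point — rigorous; the remaining pieces follow the pattern of Theorems \ref{thm5.8}--\ref{thm5.9}.
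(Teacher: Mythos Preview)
Your overall strategy is correct, and both your case (I) and your opening paragraph (deriving the failure of Robinson's CQ from a nonzero abnormal multiplier) essentially match the paper. The difference lies in how you handle case (II), and this is precisely where the paper sidesteps the obstacle you flag.

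The paper never invokes an error bound near a Robinson point. Instead, it proves directly that for all large $k\in\m{K}_\rho$ the alternative
\[
\|\bar\mu_{k+1}\|_\infty+\text{tr}(\bar Y_{k+1})>1-\e\quad\text{or}\quad\|\hat\mu_{k+1}\|_\infty+\text{tr}(\hat Y_{k+1})>1-\e
\]
holds, and then normalizes whichever multiplier pair is large by its own norm-plus-trace (a quantity bounded below by $1-\e$) before passing to the limit through the KKT systems \eqref{eq4.3} or \eqref{eq4.4}. The proof of the alternative is purely algebraic and uses only the subproblem optimality conditions: if both sums were at most $1-\e$, then every entry of $e_l\pm\bar\mu_{k+1}$ and the scalar $1-\text{tr}(\bar Y_{k+1})$ are strictly positive, so the complementarity lines of \eqref{eq4.3} force $(r_k,s_k,t_k)=0$, hence $\D l_k^v(d_k)=v_k$. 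Expanding $\D l_k^{\rho_k}(d_k)$ with the first equation of \eqref{eq4.4} and bounding $\hat\mu_{k+1}^Th_k$, $\langle\hat Y_{k+1},G_k\rangle$ using the second inequality then yields $\D l_k^{\rho_k}(d_k)\ge\e v_k=\e\D l_k^v(d_k)$, so \eqref{eq3.6} does not reduce $\rho$, contradicting $k\in\m{K}_\rho$.

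What this buys is exactly the avoidance of your metric-regularity step. Your route through case (II) does work---the bound $\|d\|=O(v_k)$ for a solution of the linearized conic system near a Robinson point is a standard consequence of Robinson's stability theorem---but it imports external machinery, whereas the paper's argument stays entirely within the KKT conditions \eqref{eq4.3}--\eqref{eq4.4} already derived. The paper also extracts the FJ multipliers from the (normalized) subproblem multipliers themselves, rather than from a separation argument after the fact as you do in case (II); this makes the construction concrete in both branches of the alternative, not just in the branch corresponding to your case (I).
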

	
	\begin{proof}
		Suppose that there exists an infinite index set $\m{K}\subseteq\m{K}_\rho$ such that
		\[
		\lim_{k\in\m{K}}x_k=x^*, \quad \lim_{k\in\m{K}}v_k=v(x^*)=0.
		\]
		Firstly, we establish that
		\be
		\label{eq5.4}
		\|\bar\mu_{k+1}\|_\infty+\text{tr}(\bar Y_{k+1})>1-\e \quad \text{or} \quad
		\|\hat\mu_{k+1}\|_\infty+\text{tr}(\hat Y_{k+1})>1-\e
		\ee
		holds for $k\in\m{K}$ sufficiently large, where $\e>0$ is a constant as defined below the equation \eqref{eq3.6}. To do this, we suppose, by contradiction, that there exists an infinite index set $\m{K}_\e\subseteq\m{K}$ such that
		\be
		\label{eq5.5}
		\|\bar\mu_{k+1}\|_\infty+\text{tr}(\bar Y_{k+1})\le1-\e, \quad
		\|\hat\mu_{k+1}\|_\infty+\text{tr}(\hat Y_{k+1})\le1-\e.
		\ee
		Then, it follows from $\lim_{k\to\infty}\rho_k=0$ that \eqref{eq3.4} is not fulfilled for $k\in\m{K}_\e$ sufficiently
		large. Consequently, in accordance with \eqref{eq3.5}, we have that $\rho_k'=\rho_k$ for $k\in\m{K}_\e$. Moreover, it follows from the first equation in \eqref{eq5.5} that 
		\[
			\|e_l-\bar\mu_{k+1}\|_\infty>0, \quad \lvert1-\text{tr}(\bar Y_{k+1})\rvert>0,
		\]
		and thus by lines 2-4 in \eqref{eq4.3} that $(r_k,s_k,t_k)=0$, which implies that
		\[
		\D l_k^v(d_k)=\D l_k^v(d_k^{fea})=v_k, \quad k\in\m{K}_\e.
		\]
		Then, by the definition of $\D l_k^\rho(d)$ and \eqref{eq4.4},
		\begin{align*}
			\D l_k^{\rho_k'}(d_k)
			&=\D l_k^{\rho_k}(d_k)\\
			&\ge\D l_k^{\rho_k}(d_k)-d_k^TB_kd_k\\
			&=\D l_k^v(d_k)-\rho_kg_k^Td_k-d_k^TB_kd_k\\
			&=v_k+d_k^TDh(x_k)^T\hat\mu_{k+1}+d_k^TDG(x_k)^*\hat Y_{k+1}\\
			&=\|h_k\|_1-\hat\mu_{k+1}^Th_k+[\l_1(G_k)]_+-\langle\hat Y_{k+1},G_k\rangle\\
			&\ge\|h_k\|_1-\|\hat\mu_{k+1}\|_\infty\|h_k\|_1+[\l_1(G_k)]_+-\text{tr}(\hat Y_{k+1})[\l_1(G_k)]_+.
		\end{align*}
		Furthermore, it follows from the second equation in \eqref{eq5.5} that
		\begin{align*}
			\D l_k^{\rho_k'}(d_k)
			&=(1-\|\hat\mu_{k+1}\|_\infty)\|h_k\|_1+(1-\text{tr}(\hat Y_{k+1}))[\l_1(G_k)]_+\\
			&\ge\e(\|h_k\|_1+[\l_1(G_k)]_+)\\
			&=\e\D l_k^v(d_k),
		\end{align*}
		meaning that $\rho_{k+1}$ will not be reduced by \eqref{eq3.6} and thus that
		\[
		\rho_{k+1}=\rho'_k=\rho_k, \quad k\in\m{K}_\e,
		\]
		which contradicts $\m{K}_\e\subseteq\m{K}_\rho=\{k\mid\rho_{k+1}<\rho_k\}$. Hence, \eqref{eq5.4} holds for $k\in\m{K}$ sufficiently
		large.
		
		Then, we will split \eqref{eq5.4} into two cases.
		
		(a) If the first equation of \eqref{eq5.4} holds, we let
		$\e_{k+1}:=\|\bar\mu_{k+1}\|_\infty+\text{tr}(\bar Y_{k+1})$, and
		\[
		\tilde\rho_k:=\dfrac{\rho_k}{\e_{k+1}}, \quad
		\tilde\mu_{k+1}:=\dfrac{\bar\mu_{k+1}}{\e_{k+1}}, \quad
		\tilde Y_{k+1}:=\dfrac{\bar Y_{k+1}}{\e_{k+1}}.
		\]
		Then $(\tilde\rho_k,\tilde\mu_{k+1},\tilde Y_{k+1})$ is bounded and there exists an infinite index set $\m{\tilde K}\subseteq\m{K}$ such that
		\[
		\lim_{k\in\m{\tilde K}}x_k=x^*, \quad
		\lim_{k\in\m{\tilde K}}\tilde\rho_k=0, \quad
		\lim_{k\in\m{\tilde K}}\tilde\mu_{k+1}=\tilde\mu^*, \quad
		\lim_{k\in\m{\tilde K}}\tilde Y_{k+1}=\tilde Y^*.
		\]
		Since that
		\[
		\na_xF(x_k,0,\tilde\mu_{k+1},\tilde Y_{k+1})=\dfrac{Dh(x_k)^T\bar\mu_{k+1}+DG(x_k)^*\bar Y_{k+1}}{\e_{k+1}}=-\dfrac{B_k^{fea}d_k^{fea}}{\e_{k+1}},
		\]
		\[
		\lim_{k\in\m{\tilde K}}\e_{k+1}\ge1-\e>0, \quad \lim_{k\in\m{\tilde K}}d_k^{fea}=0,
		\]
		we have that
		\[
		\na_xF(x^*,0,\tilde\mu^*,\tilde Y^*)
		=\lim_{k\in\m{\tilde K}}\na_xF(x_k,0,\tilde\mu_{k+1},\tilde Y_{k+1})
		=-\lim_{k\in\m{\tilde K}}\dfrac{B_k^{fea}d_k^{fea}}{\e_{k+1}}
		=0.
		\]
		Moreover, it follows from $\lim_{k\to\infty}t_k=0$ and $\langle\bar
		Y_{k+1},G_k+DG(x_k)d_k^{fea}-t_kI_m\rangle=0$ that
		\[
		\langle\tilde Y^*,G(x^*)\rangle=0.
		\]
		
		(b) If the first equation of \eqref{eq5.4} fails, we let
		$\e_{k+1}:=\|\hat\mu_{k+1}\|_\infty+\text{tr}(\hat Y_{k+1})$, and
		\[
		\tilde\rho_k:=\dfrac{\rho_k}{\e_{k+1}}, \quad
		\tilde\mu_{k+1}:=\dfrac{\hat\mu_{k+1}}{\e_{k+1}}, \quad
		\tilde Y_{k+1}:=\dfrac{\hat Y_{k+1}}{\e_{k+1}}.
		\]
		Then $(\tilde\rho_k,\tilde\mu_{k+1},\tilde Y_{k+1})$ is bounded and there exists an infinite index set $\m{\tilde
			K}\subseteq\m{K}$ such that the following limits hold:
		\[
		\lim_{k\in\m{\tilde K}}x_k=x^*, \quad
		\lim_{k\in\m{\tilde K}}\tilde\rho_k=0, \quad
		\lim_{k\in\m{\tilde K}}\tilde\mu_{k+1}=\tilde\mu^*, \quad
		\lim_{k\in\m{\tilde K}}\tilde Y_{k+1}=\tilde Y^*.
		\]
		Furthermore, we have the following relationships:
		\[
		\na_xF(x^*,0,\tilde\mu^*,\tilde Y^*)=\lim_{k\in\m{\tilde
				K}}\na_xF(x_k,\tilde\rho_k,\tilde\mu_{k+1},\tilde Y_{k+1})=-\lim_{k\in\m{\tilde
				K}}\dfrac{B_kd_k}{\e_{k+1}}=0,
		\]
		\[
		\langle\tilde Y^*,G(x^*)\rangle
		=\lim_{k\in\m{\tilde K}}\dfrac{\langle\hat Y_{k+1},G_k+DG(x_k)d_k-t_kI_m\rangle}{\e_{k+1}}
		=0.
		\]
		Hence, $(x^*,0,\tilde\mu^*,\tilde Y^*)$ serves as an FJ point for the problem \eqref{eq1.1}.
		
		Suppose, by contradiction, that the Robinson's constraint qualification holds at $x^*$. Since
		we have previously established that $(x^*,0,\tilde\mu^*,\tilde Y^*)$ serves as an FJ point for problem
		\eqref{eq1.1}, it follows that $(\tilde\mu^*,\tilde Y^*)\ne0$, and thus the following holds:
		\be
		\label{eq5.6}
		\na_xF(x^*,0,\tilde\mu^*,\tilde Y^*)=Dh(x^*)^T\tilde\mu^*+DG(x^*)^*\tilde Y^*=0.
		\ee
		If $\tilde Y^*=0$, then we would have
		\[
		0=Dh(x^*)^T\tilde\mu^*+DG(x^*)^*\tilde Y^*=Dh(x^*)^T\tilde\mu^*.
		\]
		By Definition \ref{def5.1}, $\tilde\mu^*=0$, which contradicts the fact that
		$(\tilde\mu^*,\tilde Y^*)\ne0$. Therefore, $0\ne\tilde Y^*\succeq0$ and thus that
		\[
		\langle \tilde Y^*,G(x^*)+DG(x^*)\bar d\rangle<0,
		\]
		where $\bar d$ is defined in Definition \ref{def5.1}. Moreover, by multiplying $\bar d$ on
		both sides of \eqref{eq5.6}, we have that
		\[
		0=\langle\tilde\mu^*,Dh(x^*)\bar d\rangle+\langle\tilde Y^*,DG(x^*)\bar d\rangle=\langle\tilde
		Y^*,DG(x^*)\bar d\rangle<-\langle\tilde Y^*,G(x^*)\rangle=0,
		\]
		which is a contradiction. Thus, it is concluded that $(x^*,0,\tilde\mu^*,\tilde Y^*)$ constitutes an FJ point for problem \eqref{eq1.1} where the Robinson's constraint qualification fails.
	\end{proof}
	
	We proceed to analyze the global convergence to an infeasible stationary point for \eqref{eq1.1}. We will conclude that such an infeasible stationary point indeed corresponds to an FJ point for the shifted problem \eqref{eq1.2}. The Robinson's constraint qualification for \eqref{eq1.2} is described as follows.
	
	\begin{Lemma}
		\label{lem5.11}
		Suppose that $x^*$ is a feasible point of \eqref{eq1.2} satisfying the Robinson's constraint qualification, then $Dh(x^*)$ has full row rank and
		\[
		Dh(x^*)\bar d=0, \quad G(x^*)+DG(x^*)\bar d\prec t^*I_m,
		\]
		where $\bar d\in\m{R}^n$ is defined in Definition \ref{def5.1}.
	\end{Lemma}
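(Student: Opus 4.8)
The plan is to observe that the shifted problem \eqref{eq1.2} is itself an instance of the generic NSDP \eqref{eq1.1}, so that Robinson's constraint qualification for \eqref{eq1.2} at $x^*$ is nothing but Definition \ref{def5.1} read off for the shifted data. Concretely, with the least shift $(r^*,s^*,t^*)$ fixed, I would set $\hat h(x):=h(x)-r^*+s^*$ and $\hat G(x):=G(x)-t^*I_m$; then \eqref{eq1.2} is exactly $\min_{x} f(x)$ subject to $\hat h(x)=0$ and $\hat G(x)\preceq 0$, and since the shifts are constant the derivatives are unchanged, $D\hat h(x)=Dh(x)$ and $D\hat G(x)=DG(x)$ for all $x$. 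Moreover $x^*$ is feasible for \eqref{eq1.2} precisely when $\hat h(x^*)=0$ and $\hat G(x^*)\preceq 0$, which is exactly the feasibility hypothesis needed to invoke Definition \ref{def5.1} for the shifted problem.

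First I would record the surjectivity part: Definition \ref{def5.1} applied to \eqref{eq1.2} forces $D\hat h(x^*)=Dh(x^*)$ to have full row rank, which is the first assertion. Next, Definition \ref{def5.1} supplies a unit vector $\bar d\in\m{R}^n$ with $D\hat h(x^*)\bar d=0$ and $\hat G(x^*)+D\hat G(x^*)\bar d\prec 0$. Substituting $D\hat h(x^*)=Dh(x^*)$ gives $Dh(x^*)\bar d=0$, while substituting $\hat G(x^*)=G(x^*)-t^*I_m$ and $D\hat G(x^*)=DG(x^*)$ into the strict inequality yields $G(x^*)-t^*I_m+DG(x^*)\bar d\prec 0$, i.e. $G(x^*)+DG(x^*)\bar d\prec t^*I_m$. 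This $\bar d$ is then exactly the vector of Definition \ref{def5.1} transcribed for the shifted problem, which establishes the claim.

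If a derivation from first principles is desired instead, I would start from the abstract form of Robinson's CQ for the conic system $(\hat h(x),\hat G(x))\in\{0\}^l\times\m{S}_-^m$, namely $0\in\mathrm{int}\big((\hat h(x^*),\hat G(x^*))+(D\hat h(x^*),D\hat G(x^*))\m{R}^n-\{0\}^l\times\m{S}_-^m\big)$; using $\hat h(x^*)=0$, the affine block reduces to surjectivity of $Dh(x^*)$, and, granted that, the remaining interiority condition is equivalent to the existence of $\bar d\in\ker Dh(x^*)$ with $\hat G(x^*)+D\hat G(x^*)\bar d\in\mathrm{int}\,\m{S}_-^m=\m{S}_{--}^m$, which is again $G(x^*)+DG(x^*)\bar d\prec t^*I_m$. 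The only point needing a little care is the decoupling of the affine and semidefinite blocks inside the single interiority statement, together with the harmless normalization of $\bar d$ to unit length (using $\hat G(x^*)\preceq 0$, so admissible directions form a convex set containing $0$); both are routine for mixed equality/semidefinite systems, so I foresee no genuine obstacle.
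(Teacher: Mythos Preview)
The paper states Lemma~\ref{lem5.11} without proof, so there is nothing to compare against directly. Your argument is correct and is precisely the expected one: with the shift $(r^*,s^*,t^*)$ fixed, problem \eqref{eq1.2} is an instance of the generic template \eqref{eq1.1} via $\hat h(x)=h(x)-r^*+s^*$ and $\hat G(x)=G(x)-t^*I_m$, the constant shifts leave the derivatives unchanged, and Definition~\ref{def5.1} applied to the shifted problem yields exactly the stated conclusions. The alternative derivation you sketch from the abstract interiority form of Robinson's CQ is also fine but unnecessary here, since the paper has already packaged that equivalence into Definition~\ref{def5.1}.
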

	
	We now establish the proof that under the condition of the penalty parameter remains separated from zero, every infeasible limit point arising from the sequence $\{x_k\}$ corresponds to an infeasible stationary point for problem \eqref{eq1.1}. Moreover, this infeasible stationary point also aligns as a KKT point for  \eqref{eq1.2}.
	
	\begin{Theorem}
		\label{thm5.12}
		Suppose that $\lim_{k\to\infty}\rho_k=\rho^*>0$ and $\lim_{k\to\infty}v_k>0$. Then, all accumulation points of the sequence $\{x_k\}$ correspond to an infeasible stationary point of the problem \eqref{eq1.1} and a KKT point for \eqref{eq1.2}.
	\end{Theorem}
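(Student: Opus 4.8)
The plan is to establish the two assertions separately. That every accumulation point $x^*$ of $\{x_k\}$ is an infeasible stationary point of \eqref{eq1.1} is essentially immediate from Theorem \ref{thm5.8}: since $\lim_{k\to\infty}R_k^{fea}=0$, any limit point $(x^*,\bar\mu^*,\bar Y^*)$ of $\{(x_k,\bar\mu_{k+1},\bar Y_{k+1})\}$ (which exists because $\{(\bar\mu_{k+1},\bar Y_{k+1})\}$ is bounded by Theorem \ref{thm4.5} and $\{x_k\}\subseteq\Om$ is bounded) satisfies \eqref{eq2.3}, so $x^*$ is a stationary point of $v(\cdot)$; and by continuity of $v$ (Assumption A1), $v(x^*)=\lim_{k\to\infty}v_k>0$, so $x^*$ is infeasible for \eqref{eq1.1}.

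\textbf{Identifying the least shift.} Fix an accumulation point $x^*$ with $x_k\to x^*$ along an infinite index set $\m{K}$. By Lemma \ref{lem5.6}, $d_k^{fea}\to0$ and $d_k\to0$, and by Lemma \ref{lem5.5}, $\D l_k^v(d_k^{fea})\to0$; since $e_l^T(r_k+s_k)+t_k=l_k^v(d_k^{fea})=v_k-\D l_k^v(d_k^{fea})$, the slacks are bounded and $e_l^T(r_k+s_k)+t_k\to v(x^*)=\|h(x^*)\|_1+[\l_1(G(x^*))]_+$. Passing to a subsequence on which $(r_k,s_k,t_k)\to(r^*,s^*,t^*)$ and letting $k\to\infty$ in the constraints of \eqref{eq3.2} (using $d_k^{fea}\to0$) gives $r^*,s^*,t^*\ge0$, $h(x^*)=r^*-s^*$, $G(x^*)\preceq t^*I_m$, and $e_l^T(r^*+s^*)+t^*=\|h(x^*)\|_1+[\l_1(G(x^*))]_+$. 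Since $e_l^T(r^*+s^*)\ge\|r^*-s^*\|_1=\|h(x^*)\|_1$ and $t^*\ge[\l_1(G(x^*))]_+$, both inequalities are in fact equalities, which forces $r^*=[h(x^*)]_+$, $s^*=-[h(x^*)]_-$, $t^*=[\l_1(G(x^*))]_+$. Hence $(r^*,s^*,t^*)$ is precisely the minimizer of \eqref{eq2.2} in $(r,s,t)$ with $x$ fixed at $x^*$, i.e.\ the least shift associated with $x^*$; moreover $x^*$ is feasible for \eqref{eq1.2} with this shift, and, the limit being unique, $(r_k,s_k,t_k)\to(r^*,s^*,t^*)$ along all of $\m{K}$.

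\textbf{Multipliers and KKT for \eqref{eq1.2}.} Exactly as in the proof of Theorem \ref{thm5.9}, $\{(\hat\mu_{k+1},\hat Y_{k+1})\}$ is bounded: otherwise, since $\lim_{k\to\infty}\rho_k=\rho^*>0$, condition \eqref{eq3.4} holds for infinitely many large $k$, so \eqref{eq3.5} forces $\rho_{k+1}\le\rho'_k<\rho_k$ along that subsequence and hence $\lim_{k\to\infty}\rho_k=0$, a contradiction. Passing to a further subsequence on which $(\hat\mu_{k+1},\hat Y_{k+1})\to(\hat\mu^*,\hat Y^*)$ and letting $k\to\infty$ in the first-order optimality conditions of \eqref{eq3.3} — using $x_k\to x^*$, $d_k\to0$, $(r_k,s_k,t_k)\to(r^*,s^*,t^*)$, $\rho_k\to\rho^*$, and Assumption A1 — yields
\[
\rho^*g(x^*)+Dh(x^*)^T\hat\mu^*+DG(x^*)^*\hat Y^*=0,\quad G(x^*)\preceq t^*I_m,\quad \hat Y^*\succeq0,
\]
together with $h(x^*)=r^*-s^*$ and $\langle\hat Y^*,G(x^*)-t^*I_m\rangle=0$. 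Dividing the stationarity equation and the complementarity condition by $\rho^*>0$ shows that $(x^*,\hat\mu^*/\rho^*,\hat Y^*/\rho^*)$ is a KKT point of \eqref{eq1.2}, which completes the proof.

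\textbf{Main obstacle.} I expect the delicate part to be the middle step: showing that the limiting slacks are pinned down as the componentwise-minimal (least) shift associated with $x^*$, so that the limiting system is genuinely the KKT system of \eqref{eq1.2} and not that of a problem with a larger shift — this is where the exact-penalty structure and the stationarity of $v$ at $x^*$ enter. Re-deriving the boundedness of $\{(\hat\mu_{k+1},\hat Y_{k+1})\}$ from the update rule \eqref{eq3.5}, as in Theorem \ref{thm5.9}, is the other point requiring care; all remaining limit passages are routine consequences of Lemma \ref{lem5.6} together with the smoothness hypothesis in Assumption A1.
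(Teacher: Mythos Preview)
Your proposal is correct and follows the same line as the paper's proof: infeasible stationarity from Theorem~\ref{thm5.8}, boundedness of $\{(\hat\mu_{k+1},\hat Y_{k+1})\}$ via the update rule \eqref{eq3.5} exactly as in Theorem~\ref{thm5.9}, and then a limit passage in the KKT system of \eqref{eq3.3} using $d_k\to0$ from Lemma~\ref{lem5.6}.

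The one substantive difference is your middle step. The paper simply passes to a subsequence on which $(r_k,s_k,t_k)\to(r^*,s^*,t^*)$ and verifies the limiting KKT system for \eqref{eq1.2} with this shift, without checking that $(r^*,s^*,t^*)$ is the \emph{least} shift associated with $x^*$. You close this gap by using $e_l^T(r_k+s_k)+t_k=l_k^v(d_k^{fea})\to v(x^*)$ together with the inequalities $e_l^T(r^*+s^*)\ge\|h(x^*)\|_1$ and $t^*\ge[\l_1(G(x^*))]_+$ to force $r^*=[h(x^*)]_+$, $s^*=-[h(x^*)]_-$, $t^*=[\l_1(G(x^*))]_+$. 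Since \eqref{eq1.2} is defined in the paper with $(r,s,t)$ equal to the least shift, this step is needed for the conclusion to mean what the theorem asserts; the paper's proof leaves it implicit, so your version is in fact more complete on this point. Your identification of this as the ``main obstacle'' is apt.
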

	
	\begin{proof}
		By Theorem \ref{thm5.8} and $\lim_{k\to\infty}v_k>0$, it becomes evident that every accumulation point $x^*$ derived from the sequence $\{x_k\}$ is an infeasible stationary point for \eqref{eq1.1}.
		
		Considering the proof outlined in Theorem \ref{thm5.9}, it has been established that the sequence $\{(\hat\mu_{k+1},\hat Y_{k+1})\}$ is bounded. Let $\m{K}$ be an infinite index set such that
		\[
		\lim_{k\in\m{K}}\bar\mu_{k+1}=\bar\mu^*, \quad
		\lim_{k\in\m{K}}\bar Y_{k+1}=\bar Y^*, \quad
		\lim_{k\in\m{K}}r_k=r^*, \quad
		\lim_{k\in\m{K}}s_k=s^*, \quad
		\lim_{k\in\m{K}}t_k=t^*.
		\]
		Recalling that $\rho_k>0$ and that $(d_k,\hat\mu_{k+1}/\rho_k,\hat Y_{k+1}/\rho_k)$ satisfies the KKT condition of \eqref{eq3.3} as expressed in equation \eqref{eq5.7}, which provides
		\be
		\label{eq5.7}
		\ba{c}
		g_k+B_kd_k/\rho_k+Dh(x_k)^T(\hat\mu_{k+1}/\rho_k)+DG(x_k)^*(\hat Y_{k+1}/\rho_k)=0,\\
		h_k+Dh(x_k)d_k=r_k-s_k,\\
		G_k+DG(x_k)d_k\preceq t_kI_m,\\
		\langle\hat Y_{k+1}/\rho_k,G_k+DG(x_k)d_k-t_kI_m\rangle=0, \quad \hat Y_{k+1}\succeq0,
		\ea
		\ee
		we proceed by taking limits in \eqref{eq5.7} for $k\in\m{K}$ and conclude that
		\[
		\ba{c}
		g(x^*)+Dh(x^*)^T(\hat\mu^*/\rho^*)+DG(x^*)^*(\hat Y^*/\rho^*)=0,\\
		h(x^*)=r^*-s^*,\\
		G(x^*)\preceq t^*I_m,\\
		\langle\hat Y^*/\rho^*,G(x^*)-t^*I_m\rangle=0, \quad \hat Y^*\succeq0.
		\ea
		\]
		Consequently, it is established that $(x^*,\hat\mu^*/\rho^*,\hat Y^*/\rho^*)$ represents a KKT point for \eqref{eq1.2}.
	\end{proof}
	
	We demonstrate that if the penalty parameter $\rho_k$ tends to zero, every infeasible accumulation point derived from the sequence $\{x_k\}$ corresponds to an infeasible stationary point for \eqref{eq1.1}. Additionally, these accumulation points also correspond to FJ points for \eqref{eq1.2} in instances where the Robinson's constraint qualification fails.
	
	\begin{Theorem}
		\label{thm5.13}
		Suppose that $\lim_{k\to\infty}\rho_k=0$ and $\lim_{k\to\infty}v_k>0$, $\m{K}_\rho$ is the index set defined in Lemma \ref{lem5.7}. Then, all accumulation points of $\{x_k\}_{\m{K}_\rho}$ correspond to infeasible stationary points for \eqref{eq1.1}. Furthermore, these accumulation points also represent FJ points for \eqref{eq1.2} in cases where the Robinson's constraint qualification fails.
	\end{Theorem}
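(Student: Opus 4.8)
The plan is to follow the template of Theorem~\ref{thm5.10} almost line for line, adjusting only for the fact that here the feasibility‑subproblem shift $(r_k,s_k,t_k)$ need not converge to $0$. The first assertion is essentially free: since $\{\rho_k\}$ is nonincreasing and $\rho_k\to0<\rho_0$, the set $\m{K}_\rho$ is infinite, and $\{x_k\}_{\m{K}_\rho}\subseteq\Om$ has accumulation points; for any such $x^*$, continuity of $v$ gives $v(x^*)=\lim_{k\to\infty}v_k>0$, while Theorem~\ref{thm5.8} already guarantees that $x^*$ is first‑order optimal for \eqref{eq2.1}, i.e.\ an infeasible stationary point of \eqref{eq1.1}. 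Its associated least shift is $(r^*,s^*,t^*)=([h(x^*)]_+,-[h(x^*)]_-,[\l_1(G(x^*))]_+)$; along any subsequence $\m{K}$ on which $x_k\to x^*$ this is precisely $\lim_{k\in\m{K}}(r_k,s_k,t_k)$, since $d_k^{fea}\to0$ (Lemma~\ref{lem5.6}) and $(r_k,s_k,t_k)$ is read off from $h_k+Dh(x_k)d_k^{fea}$ and $\l_1(G_k+DG(x_k)d_k^{fea})$.

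Fix such a $\m{K}\subseteq\m{K}_\rho$. First I would re‑run, essentially verbatim, the contradiction argument of Theorem~\ref{thm5.10} showing that \eqref{eq5.4} holds for all large $k\in\m{K}$; the point is that its only ingredient is that, were \eqref{eq5.5} to hold along an infinite subset, lines~2--4 of \eqref{eq4.3} together with $\|\bar\mu_{k+1}\|_\infty,\operatorname{tr}(\bar Y_{k+1})\le1-\e$ force $(r_k,s_k,t_k)=0$, whereupon \eqref{eq3.3} reduces to the system \eqref{eq4.4} and the estimate $\D l_k^{\rho'_k}(d_k)\ge(1-\|\hat\mu_{k+1}\|_\infty)\|h_k\|_1+(1-\operatorname{tr}(\hat Y_{k+1}))[\l_1(G_k)]_+\ge\e\,\D l_k^v(d_k)$ contradicts $k\in\m{K}_\rho$, and nowhere is $v(x^*)=0$ used. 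With \eqref{eq5.4} in hand I would split into its two cases as in Theorem~\ref{thm5.10}: let $\e_{k+1}$ be whichever of $\|\bar\mu_{k+1}\|_\infty+\operatorname{tr}(\bar Y_{k+1})$ and $\|\hat\mu_{k+1}\|_\infty+\operatorname{tr}(\hat Y_{k+1})$ exceeds $1-\e$, let $\tilde\mu_{k+1},\tilde Y_{k+1}$ be the matching multipliers divided by $\e_{k+1}$, and pass to a further subsequence on which $x_k\to x^*$, $\tilde\mu_{k+1}\to\tilde\mu^*$, $\tilde Y_{k+1}\to\tilde Y^*$, $(r_k,s_k,t_k)\to(r^*,s^*,t^*)$. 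Using $\e_{k+1}\ge1-\e>0$, $\rho_k\to0$, $d_k^{fea}\to0$ and $d_k\to0$ (Lemma~\ref{lem5.6}): dividing the stationarity row of \eqref{eq4.3} (case~(a), with $\bar\cdot$) or of \eqref{eq5.7} (case~(b), with $\hat\cdot$) by $\e_{k+1}$ and letting $k\to\infty$ gives $\na_xF(x^*,0,\tilde\mu^*,\tilde Y^*)=Dh(x^*)^T\tilde\mu^*+DG(x^*)^*\tilde Y^*=0$; the feasibility rows pass to $h(x^*)=r^*-s^*$ and $G(x^*)\preceq t^*I_m$; dividing the complementarity row by $\e_{k+1}$ and using $t_k\to t^*$ gives $\langle\tilde Y^*,G(x^*)-t^*I_m\rangle=0$; and $\tilde Y^*\succeq0$ while the normalization forces $\|\tilde\mu^*\|_\infty+\operatorname{tr}(\tilde Y^*)=1$, so $(\tilde\mu^*,\tilde Y^*)\ne0$. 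Hence $(x^*,0,\tilde\mu^*,\tilde Y^*)$ is an FJ point of \eqref{eq1.2}.

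It remains to rule out Robinson's CQ for \eqref{eq1.2} at $x^*$. Suppose it held; then Lemma~\ref{lem5.11} provides a unit $\bar d$ with $Dh(x^*)\bar d=0$ and $G(x^*)+DG(x^*)\bar d\prec t^*I_m$, and $Dh(x^*)$ has full row rank. If $\tilde Y^*=0$ then $Dh(x^*)^T\tilde\mu^*=0$ forces $\tilde\mu^*=0$, contradicting $(\tilde\mu^*,\tilde Y^*)\ne0$; hence $0\ne\tilde Y^*\succeq0$, so $\langle\tilde Y^*,\,G(x^*)-t^*I_m+DG(x^*)\bar d\rangle<0$, which together with $\langle\tilde Y^*,G(x^*)-t^*I_m\rangle=0$ gives $\langle\tilde Y^*,DG(x^*)\bar d\rangle<0$; but applying $\bar d$ to $Dh(x^*)^T\tilde\mu^*+DG(x^*)^*\tilde Y^*=0$ and using $Dh(x^*)\bar d=0$ gives $\langle\tilde Y^*,DG(x^*)\bar d\rangle=0$, a contradiction. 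I expect the main obstacle to be careful bookkeeping rather than any new idea: one must check that the shift $(r^*,s^*,t^*)$ appearing in \eqref{eq1.2} is exactly the common limit of the subproblem data (so that the limiting complementarity reads $\langle\tilde Y^*,G(x^*)-t^*I_m\rangle=0$, not $\langle\tilde Y^*,G(x^*)\rangle=0$), and that the derivation of \eqref{eq5.4}, the case split, and the two normalizations all survive without the feasibility $v(x^*)=0$ enjoyed in Theorem~\ref{thm5.10}.
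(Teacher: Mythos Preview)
Your argument is correct, but the paper takes a shorter route that avoids the whole \eqref{eq5.4} derivation and the case split. The key observation you are missing is that in the infeasible case the feasibility–subproblem multipliers $(\bar\mu_{k+1},\bar Y_{k+1})$ already do all the work: since $v(x^*)>0$ and $d_k^{fea}\to0$, the shift $(r_k,s_k,t_k)$ is eventually nonzero, and then the complementarity rows of \eqref{eq4.3} force $\|\bar\mu_{k+1}\|_\infty=1$ or $\operatorname{tr}(\bar Y_{k+1})=1$. Thus $(\bar\mu_{k+1},\bar Y_{k+1})\ne0$, and because \eqref{eq4.3} also gives the uniform bounds $\|\bar\mu_{k+1}\|_\infty\le1$, $\operatorname{tr}(\bar Y_{k+1})\le1$, no normalization is needed; one simply passes to a subsequence $(\bar\mu_{k+1},\bar Y_{k+1})\to(\bar\mu^*,\bar Y^*)\ne0$, takes limits in the first and fifth rows of \eqref{eq4.3} (using $d_k^{fea}\to0$, $t_k\to t^*$), and obtains the FJ conditions for \eqref{eq1.2} directly. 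The Robinson-CQ contradiction is then the same as yours.

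In effect, your reproduction of the \eqref{eq5.4} argument is unnecessary here: the moment you observe that \eqref{eq5.5} forces $(r_k,s_k,t_k)=0$, you already have a contradiction with $(r_k,s_k,t_k)\to(r^*,s^*,t^*)\ne0$, so the subsequent estimate on $\D l_k^{\rho'_k}(d_k)$ and the appeal to $k\in\m{K}_\rho$ are redundant. What your approach buys is a uniform template shared with Theorem~\ref{thm5.10} (and, via case~(b), the option of using the $\hat\cdot$-multipliers); what the paper's approach buys is brevity and the elimination of case~(b) altogether, since the bounded and nonvanishing $(\bar\mu_{k+1},\bar Y_{k+1})$ are always available in the infeasible regime.
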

	
	\begin{proof}
		As in the proof of Theorem \ref{thm5.12}, $x^*$ is an infeasible stationary point for \eqref{eq1.1}.
		Moreover, by Lemma \ref{lem5.6} and the constraints in \eqref{eq3.2}, we deduce that $(r_k,s_k,t_k)\ne0$ holds for
		$k$ sufficiently large. Thus, by referring to \eqref{eq4.3},
		\[
		\|\bar\mu_{k+1}\|_\infty=1 \quad \text{or} \quad \l_1(\bar Y_{k+1})=1
		\]
		holds for $k$ sufficiently large, which implies that $(\bar\mu_{k+1},\bar Y_{k+1})\ne0$. Since $(\bar\mu_{k+1},\bar Y_{k+1})$ is bounded, let us assume that there exists an infinite index set $\m{K}\subseteq\m{K}_\rho$ such that
		\[
		\lim_{k\in\m{K}}x_k=x^*, \quad
		\lim_{k\in\m{K}}\rho_k=0, \quad
		\lim_{k\in\m{K}}\bar\mu_{k+1}=\bar\mu^*, \quad
		\lim_{k\in\m{K}}\bar Y_{k+1}=\bar Y^*.
		\]
		Then, we have that
		\[
		\na_xF(x^*,0,\bar\mu^*,\bar Y^*)=\lim_{k\in\m{K}}\na_xF(x_k,0,\bar\mu_{k+1},\bar
		Y_{k+1})=-\lim_{k\in\m{K}}B_k^{fea}d_k^{fea}=0.
		\]
		Furthermore, it follows from $\lim_{k\to\infty}t_k=t^*$ and $\langle\bar Y_{k+1},G_k+DG(x_k)d_k^{fea}-t_kI_m\rangle=0$ that
		\[
		\langle\bar Y^*,G(x^*)-t^*I_m\rangle=0.
		\]
		Hence, one can deduce that $(x^*,0,\bar\mu^*,\bar Y^*)$ represents an FJ point for \eqref{eq1.2}.
		
		Suppose, by contradiction, that the Robinson's constraint qualification holds at $x^*$. Having already established that $(x^*,0,\bar\mu^*,\bar Y^*)$ represents an FJ point for \eqref{eq1.2}, it follows that $(\bar\mu^*,\bar Y^*)\ne0$. This leads that
		\be
		\label{eq5.8}
		\na_xF(x^*,0,\bar\mu^*,\bar Y^*)=Dh(x^*)^T\bar\mu^*+DG(x^*)^*\bar Y^*=0.
		\ee
		In the event that $\bar Y^*=0$, we have that
		\[
		0=Dh(x^*)^T\bar\mu^*+DG(x^*)^*\bar Y^*=Dh(x^*)^T\bar\mu^*.
		\]
		By Lemma \ref{lem5.11}, $\bar\mu^*=0$. However, this contradicts the fact that $(\bar\mu^*,\bar
		Y^*)\ne0$. Therefore, $0\ne\bar Y^*\succeq0$ and
		\[
		\langle\bar Y^*,G(x^*)+DG(x^*)\bar d-t^*I_m\rangle<0,
		\]
		where $\bar d$ is a parameter mentioned in Lemma \ref{lem5.11}. Moreover, by multiplying $\bar d$ on
		both sides of \eqref{eq5.8}, we have
		\[
		0=\langle\bar\mu^*,Dh(x^*)\bar d\rangle+\langle\bar Y^*,DG(x^*)\bar d\rangle=\langle\bar
		Y^*,DG(x^*)\bar d\rangle<-\langle\bar Y^*,G(x^*)-t^*I_m\rangle=0,
		\]
		which is a contradiction. Thus, we conclude that $(x^*,0,\bar\mu^*,\bar Y^*)$ represents an FJ point for \eqref{eq1.2} where the Robinson's constraint qualification fails.
	\end{proof}
	
	The theorems derived in this section are directly applicable to the limit values of the sequences $\{\rho_k\}$ and $\{v_k\}$. The resulting accumulation point $x^*$ holds a significant position known as the Fritz-John point, satisfying various conditions as outlined in Table \ref{tab5.1}. By referring to the theorems presented in this section, one can gain a deeper understanding of the exact relationship between the limit values of the aforementioned sequences and the various types of accumulation points.
	\begin{table}[h]
		\caption{Summarize of the theorems}\label{tab5.1}%
		\begin{tabular}{@{}llll@{}}
			\toprule
			Theorem & $\lim\rho_k$ & $\lim v_k$ & Results \\
			\midrule
			Theorem \ref{thm5.9} & $>0$ & $=0$ & A KKT point of \eqref{eq1.1}.\\
			Theorem \ref{thm5.10} & $=0$ & $=0$ & Robinson's CQ of \eqref{eq1.1} fails.\\
			Theorem \ref{thm5.12} & $>0$ & $>0$ & An infeasible stationary point of \eqref{eq1.1}, and a KKT point of \eqref{eq1.2}.\\
			Theorem \ref{thm5.13} & $=0$ & $>0$ & An infeasible stationary point of \eqref{eq1.1}, and Robinson's CQ of \eqref{eq1.2} fails.\\
			\botrule
		\end{tabular}
	\end{table}

	\section{Numerical experiments}
	\label{sect6}
	
	Some numerical experiments were done in order to demonstrate the theoretical properties of Algorithm \ref{alg3.1}. We developed an implementation of the algorithm in MATLAB (version R2023a) and tested its performance under several situations. The subproblems \eqref{eq3.2} and \eqref{eq3.3} were both solved by SeDuMi solver (version 1.32) with default settings. The matrix $B_k^{fea}$ was set as a constant one, while $B_k$ was defined as $B_k:=\max\{10^{-5},\rho_k\}B_k^{\rm bfgs}$, with $B_k^{\rm bfgs}$ being updated using the modified BFGS updating formula (\cite{Yamashita12}).
	
	The initial parameters were chosen as follows:
	\[
	B_k^{fea}=0.001I_n, \quad \eta=10^{-4}, \quad \epsilon=10^{-4}, \quad \delta=0.9, \quad \gamma=0.6, \quad \rho_0=1.
	\]
	Algorithm \ref{alg3.1} was designed to terminate under the conditions that $\|d_k\|<10^{-4}$ and either:
	
	(i) $v(x_k)<10^{-4}$, signifying that $x_k$ corresponds to a Fritz-John point.
	
	(ii) $v(x_k)\ge10^{-4}$, indicating that $x_k$ corresponds to an infeasible stationary point.
	
	Several small problems with different situations are tested. Problem \eqref{eq6.1} and Problem \eqref{eq6.2} are instances with no feasible solutions. Problem \eqref{eq6.3} and Problem \eqref{eq6.4} are feasible problems, yet the Robinson's constraint qualification fails at each solution. Problem \eqref{eq6.5} and Problem \eqref{eq6.6} are also feasible problems, but the linearized constraints at the proposed initial points are inconsistent.
	
	The first test problem is generated from the so-called \emph{isolated} problem in \cite{Byrd10}:
	\be
	\label{eq6.1}
	\ba{cl}
	\min & x_1+x_2\\
	\st & \left(\ba{cc}
	-1 & x_1 \\
	x_1&1+x_2
	\ea\right)\preceq0, \quad \left(\ba{cc}
	-1 & x_1 \\
	x_1&1-x_2\\
	\ea\right)\preceq0,\\
	\vspace{-5mm}\\
	& \left(\ba{cc}
	-1 & x_2  \\
	x_2&1+x_1 \\
	\ea\right)\preceq0, \quad \left(\ba{cc}
	-1 & x_2 \\
	x_2&1+x_1\\
	\ea\right)\preceq0.
	\ea
	\ee
	The standard initial point is $x_0=(3,2)$, and its corresponding solution denoted as $x^*=(0,0)$ is a strict minimizer of
	the infeasibility measure \eqref{eq2.1}. Algorithm \ref{alg3.1} terminates at an approximate point that closely approaches the true solution. As evidenced by Table \ref{tab6.1}, the initial value of $\rho_0$ is computed to be $0.1109$, leading to the identification of an infeasible stationary point.
	\begin{table}[h]
		\caption{Output for test problem \eqref{eq6.1}}\label{tab6.1}%
		\begin{tabular}{@{}lllllll@{}}
			\toprule
			$k$ & $\rho_k$ & $x_k$ & $\|d_k\|$ & $l_k^v(d_k^{fea})$ & $v(x_k)$ & $f(x_k)$\\
			\midrule
			0 & 1.0000 & (3.0000e$+$00, 2.0000e$+$00) & 3.6056e$+$00 & 1.0000 & 4.7016 & 5.0000\\
			1 & 0.1109 & (1.8680e$-$10, 1.0992e$-$10) & 2.1690e$-$10 & 1.0000 & 1.0000 & 0.0000\\
			\botrule
		\end{tabular}
	\end{table}
	
	The second example is modified from the \emph{nactive} problem in \cite{Byrd10}:
	\be
	\label{eq6.2}
	\ba{cl}
	\min & x_1 \\
	\st & \left(\ba{cc}
	-1 & x_2 \\
	x_2&0.5(x_1+1)
	\ea\right)\preceq0, \quad \left(\ba{cc}
	-1 & x_2 \\
	x_2&-x_1\\
	\ea\right)\preceq0, \quad x_1-x_2^2\le0.
	\ea
	\ee
	The given initial point is $x_0=(-20,10)$. An infeasible
	stationary point $x^*=(0,0)$ is derived by \cite{Byrd10} under the measure of $\ell_1$-norm shift. Another infeasible stationary point $x^*=(-0.2,0)$ is derived by \cite{Dai20} under the measure of $\ell_2$-norm shift. Algorithm \ref{alg3.1} terminates at an infeasible
	stationary point $x^*=(-0.3333,0)$ under the measure of $\ell_\infty$-norm shift, where $v(x^*)=0.3333$, $f(x^*)=-0.3333$. A more comprehensive overview of the results can be found in Table \ref{tab6.2}.
	\begin{table}[h]
		\caption{Output for test problem \eqref{eq6.2}}\label{tab6.2}%
		\begin{tabular}{@{}lllllll@{}}
			\toprule
			$k$ & $\rho_k$ & $x_k$ & $\|d_k\|$ & $l_k^v(d_k^{fea})$ & $v(x_k)$ & $f(x_k)$\\
			\midrule
			0 & 1.0000 & $(-20.0000,10.0000)$ & 2.0353e$+$01 & 4.4728 & 24.0000 & $-$20.0000\\
			1 & 0.0087 & $(-0.3333,4.7597)$ & 2.6157e$+$00 & 1.9119 & 4.4728 & $-$0.3333\\
			2 & 0.0087 & $(-0.3333,2.1440)$ & 1.3193e$+$00 & 0.7271 & 1.9119 & $-$0.3333\\
			3 & 0.0087 & $(-0.3333,0.8247)$ & 8.1657e$-$01 & 0.3334 & 0.7271 & $-$0.3333\\
			4 & 0.0087 & $(-0.3333,0.0081)$ & 8.1009e$-$03 & 0.3333 & 0.3334 & $-$0.3333\\
			5 & 0.0087 & $(-0.3333,0.0000)$ & 1.7220e$-$05 & 0.3333 & 0.3333 & $-$0.3333\\
			\botrule
		\end{tabular}
	\end{table}

	We consider third problem called \emph{counterexample}, which is taken from Problem TP3 in \cite{Dai20} with a negative semidefinite constraint:
	\be
	\label{eq6.3}
	\ba{cl}
	\min & x_1 \\
	\st & x_1^2-x_2-1=0,\\
	& x_1-x_3-2=0,\\
	& \left(\ba{cc}-x_2&0\\0&-x_3\ea\right)\preceq0.
	\ea
	\ee
	The initial point is $x_0=(-4,1,1)$. This problem has a unique global minimizer $x^*=(2,3,0)$, at which the Robinson's constraint qualification fails. Algorithm \ref{alg3.1} terminates at an approximate solution $(2.00,3.00,0.00)$ in 6 iterations. See Table \ref{tab6.3} for more details.
	\begin{table}[h]
		\caption{Output for test problem \eqref{eq6.3}}\label{tab6.3}%
		\begin{tabular}{@{}lllllll@{}}
			\toprule
			$k$ & $\rho_k$ & $x_k$ & $\|d_k\|$ & $l_k^v(d_k^{fea})$ & $v(x_k)$ & $f(x_k)$\\
			\midrule
			0 & 0.0127 & $(-4.00,1.00,1.00)$ & 7.0000e+00 & 3.6667 & 21.0000 & $-$4.0000\\
			1 & 0.0127 & $(-1.67,-3.67,-3.67)$ & 2.1762e+00 & 2.4103 & 9.1111 & $-$1.6667\\
			2 & 0.0127 & $(-0.41,-2.41,-2.41)$ & 1.5019e+00 & 1.5432 & 3.9888 & $-$0.4103\\
			3 & 0.0127 & $(0.46,-1.54,-1.54)$ & 3.0718e+00 & $-$0.0000 & 2.2950 & 0.4568\\
			4 & 0.0127 & $(1.38,-0.25,-0.62)$ & 2.9951e+00 & $-$0.0000 & 1.7753 & 1.3827\\
			5 & 0.0127 & $(2.00,2.62,0.00)$ & 3.8101e$-$01 & $-$0.0000 & 0.3810 & 2.0000\\
			6 & 0.0114 & $(2.00,3.00,0.00)$ & 9.0896e$-$09 & 0.0000 & 0.0000 & 2.0000\\
			\botrule
		\end{tabular}
	\end{table}

	The fourth \emph{standard} test problem is the one taken from Problem TP4 in \cite{Dai20} with a negative semidefinite constraint:
	\be
	\label{eq6.4}
	\ba{cl}
	\min & (x_1-2)^2+x_2^2 \\
	\st & \left(\ba{ccc}
	-(1-x_1)^3+x_2 & 0 & 0 \\
	0 & -x_1 & 0 \\
	0 & 0 & -x_2
	\ea\right)\preceq0.
	\ea
	\ee
	The initial point $x_0=(-2,-2)$ is an infeasible point. Its optimal solution $x^*=(1,0)$ is not a KKT point but is a singular stationary one at which the Robinson's constraint qualification fails. Numerical results in Table \ref{tab6.4} show that Algorithm \ref{alg3.1} terminates at an approximate point to the solution.
	\begin{table}[h]
		\caption{Output for test problem \eqref{eq6.4}}\label{tab6.4}%
		\begin{tabular}{@{}lllllll@{}}
			\toprule
			$k$ & $\rho_k$ & $x_k$ & $\|d_k\|$ & $l_k^v(d_k^{fea})$ & $v(x_k)$ & $f(x_k)$\\
			\midrule
			0 & 1.0000 & $(-2.0000,-2.0000)$ & 1.5117e+00 & 0.9310 & 2.0000 & 20.0000\\
			1 & 0.0098 & $(-0.9310,-0.9310)$ & 9.7193$e-$01 & 0.2438 & 0.9310 & 9.4578\\
			2 & 0.0098 & $(-0.2438,-0.2438)$ & 5.1973$e-$01 & $-$0.0000 & 0.2438 & 5.0939\\
			3 & 0.0098 & $(0.1503,0.0950)$ & 2.9874$e-$01 & $-$0.0000 & 0.0000 & 3.4303\\
			4 & 0.0098 & $(0.4336,0.0000)$ & 1.8881$e-$01 & $-$0.0000 & 0.0000 & 2.4537\\
			5 & 0.0098 & $(0.6224,0.0000)$ & 1.2587$e-$01 & $-$0.0000 & 0.0000 & 1.8979\\
			6 & 0.0098 & $(0.7482,0.0000)$ & 8.3918$e-$02 & $-$0.0000 & 0.0000 & 1.5669\\
			7 & 0.0098 & $(0.8322,0.0000)$ & 5.5945$e-$02 & $-$0.0000 & 0.0000 & 1.3638\\
			8 & 0.0098 & $(0.8881,0.0000)$ & 3.7296$e-$02 & $-$0.0000 & 0.0000 & 1.2363\\
			9 & 0.0098 & $(0.9254,0.0000)$ & 2.4863$e-$02 & $-$0.0000 & 0.0000 & 1.1548\\
			10 & 0.0098 & $(0.9503,0.0000)$ & 1.6577$e-$02 & $-$0.0000 & 0.0000 & 1.1019\\
			11 & 0.0098 & $(0.9668,0.0000)$ & 1.1050$e-$02 & $-$0.0000 & 0.0000 & 1.0674\\
			12 & 0.0098 & $(0.9779,0.0000)$ & 7.3682$e-$03 & $-$0.0000 & 0.0000 & 1.0447\\
			13 & 0.0098 & $(0.9853,-0.0000)$ & 4.9100$e-$03 & $-$0.0000 & 0.0000 & 1.0297\\
			14 & 0.0098 & $(0.9902,0.0000)$ & 3.2752$e-$03 & $-$0.0000 & 0.0000 & 1.0197\\
			15 & 0.0073 & $(0.9934,0.0000)$ & 2.1836$e-$03 & $-$0.0000 & 0.0000 & 1.0131\\
			16 & 0.0044 & $(0.9956,0.0000)$ & 1.4506$e-$03 & $-$0.0000 & 0.0000 & 1.0088\\
			17 & 0.0033 & $(0.9971,0.0000)$ & 9.6549$e-$04 & $-$0.0000 & 0.0000 & 1.0058\\
			18 & 0.0020 & $(0.9980,0.0000)$ & 6.5014$e-$04 & $-$0.0000 & 0.0000 & 1.0039\\
			19 & 0.0015 & $(0.9987,0.0000)$ & 4.2479$e-$04 & $-$0.0000 & 0.0000 & 1.0026\\
			20 & 0.0009 & $(0.9991,0.0000)$ & 2.9113$e-$04 & $-$0.0000 & 0.0000 & 1.0018\\
			21 & 0.0007 & $(0.9994,0.0000)$ & 1.9559$e-$04 & 0.0000 & 0.0000 & 1.0012\\
			22 & 0.0004 & $(0.9996,-0.0000)$ & 1.3155$e-$04 & 0.0000 & 0.0000 & 1.0008\\
			23 & 0.0003 & $(0.9997,-0.0000)$ & 3.3915$e-$05 & 0.0000 & 0.0000 & 1.0005\\
			\botrule
		\end{tabular}
	\end{table}

	The fifth problem is the following \emph{Rosen-Suzuki} problem (\cite{Charalambous77}):
	\be
	\label{eq6.5}
	\ba{cl}
	\dmin_{x\in\m{R}^4}& f(x)=x_1^2+x_2^2+2x_3^2+x_4^2-5x_1-5x_2-21x_3+7x_4\\
	\st & h(x)=\left(\ba{c}
	x_1^2+x_2^2+x_3^2+x_4^2+x_1-x_2+x_3-x_4-8\\
	x_1^2+2x_2^2+x_3^2+2x_4^2-x_1-x_4-9\\
	2x_1^2+x_2^2+x_3^2-x_2-x_4-5
	\ea\right)=0,\\
	\vspace{-5mm}\\
	& G(x)=\left(\ba{cccc}
	-x_2-x_3&0&0&0\\
	0&2x_4&-x_1&0\\
	0&-x_1&-x_1&0\\
	0&0&0&-x_2-x_3
	\ea\right)\preceq0.
	\ea
	\ee
	The solution is $x^*=(0,1,2,-1)$. Numerical results are listed in Table \ref{tab6.5}. $x_0$ stands for the initial point. Two infeasible stationary points are detected, one is $x^*=(-0.0000,1.3788,2.2799,-0.0000)$ with $v(x^*)=0.7201$, $f(x^*)=-42.4746$ from the initial point $( 1, 1, 1, 1)$,  the other one is $x^*=(-0.0425, -1.1544, 1.1113, -1.3594)$ with $v(x^*)=0.04314$, $f(x^*)=-21.2158$ from the initial point $(-1, -1, -1, -1)$, see the ``*'' line in Table \ref{tab6.5}, the symbol ``{\rm Nit}''  represents for the total number of iterations.
	\begin{table}[h]
		\caption{Output for test problem \eqref{eq6.5}}\label{tab6.5}%
		\begin{tabular}{@{}llllll@{}}
			\toprule
			$x_0$ & ${\rm Nit}$ & $\rho^*$ & $l_k^v(d_*^f)$ & $v(x^*)$ & $f(x^*)$\\
			\midrule
			$(0,0,0,0)$ &  8 & 0.0466 & 2.6390e$-$14 & 9.7674e$-$11 & $-$44.0000\\
			$(1,1,1,1)$ &  4 & 0.0022 & 7.2012e$-$01 & 7.2012e$-$01 & $-$42.4746*\\
			$(-1,-1,-1,-1)$ &  6 & 0.0087 & 4.3135e$-$02 & 4.3135e$-$02 & $-$21.2158*\\
			$(2,2,2,2)$ &  9 & 0.0336 & 6.5271e$-$09 & 3.2027e$-$11 & $-$44.0000\\
			$(-2,-2,-2,-2)$ & 19 & 0.0023 & 2.6883e$-$14 & 3.7372e$-$09 & $-$44.0000\\
			$(3,3,3,3)$ &  7 & 0.0458 & 6.5269e$-$09 & 3.5874e$-$08 & $-$44.0000\\
			$(-3,-3,-3,-3)$ & 12 & 0.0163 & 2.0595e$-$14 & 1.7571e$-$11 & $-$44.0000\\
			$(4,4,4,4)$ &  9 & 0.0645 & 2.2969e$-$14 & 9.2907e$-$11 & $-$44.0000\\
			$(-4,-4,-4,-4)$ & 13 & 0.0200 & 2.2528e$-$14 & 3.7266e$-$09 & $-$44.0000\\
			$(5,5,5,5)$ & 10 & 0.0996 & 1.9654e$-$14 & 2.7729e$-$10 & $-$44.0000\\
			$(-5,-5,-5,-5)$ & 15 & 0.0187 & 4.8459e$-$14 & 2.5442e$-$08 & $-$44.0000\\
			$(10,10,10,10)$ & 10 & 0.1098 & 2.2859e$-$14 & 5.9946e$-$11 & $-$44.0000\\
			$(-10,-10,-10,-10)$ & 19 & 0.0121 & 1.7289e$-$14 & 6.3386e$-$09 & $-$44.0000\\
			$(100,100,100,100)$ & 15 & 0.0040 & 1.6588e$-$14 & 2.9108e$-$10 & $-$44.0000\\
			$(-100,-100,-100,-100)$ & 17 & 0.0004 & 1.9853e$-$14 & 2.1445e$-$09 & $-$44.0000\\
			\botrule
		\end{tabular}
	\end{table}
	
	We consider the \emph{Hock-Shittkowski} problem combined with the positive semidefinite constraint, see Problem 2 in \cite{Wu13}:
	\be
	\label{eq6.6}
	\ba{cl}
	\dmin_{x\in\m{R}^6}& f(x)=x_1x_4(x_1+x_2+x_3)+x_3\\
	\st & h(x)=\left(\ba{c}
	x_1x_2x_3x_4-x_5-25\\
	x_1^2+x_2^2+x_3^2+x_4^2-x_6-40
	\ea\right)=0,\\
	& G(x)=\left(\ba{cccc}
	x_1&x_2&0&0\\
	x_2&x_4&x_2+x_3&0\\
	0&x_2+x_3&x_4&x_3\\
	0&0&x_3&x_1
	\ea\right)\succeq0,\\
	&x\in\left\{x\in\m{R}^6\mid 1\le x_i\le5, i=1,2,3,4, x_j\ge0, j=5,6\right\}.
	\ea
	\ee
	An approximate minimizer $x^*=(2.7586,1.0000,2.5278,5.0000,9.8668,-0.0000)$ is identified for every chosen initial point. Extensive numerical outcomes regarding Problem \eqref{eq6.6} are presented in Table \ref{tab6.6}. Notably, it should be highlighted that the augmented Lagrangian method proposed in \cite{Wu13} yields a comparatively larger minimizer, specifically $f_k=128.8015$. 
	\begin{table}[h]
		\caption{Output for test problem \eqref{eq6.6}}\label{tab6.6}%
		\begin{tabular}{@{}lllll@{}}
			\toprule
			$x_0$ & ${\rm Nit}$ & $\rho^*$ & $v(x^*)$ & $f(x^*)$\\
			\midrule
			(1,1,1,1,1,1) & 15 & 0.0001 & 4.3142e$-$06 & 89.2383\\
			(2,2,2,2,2,2) & 17 & 0.0056 & 6.6705e$-$06 & 89.2382\\
			(3,3,3,3,3,3) & 35 & 0.0012 & 3.3805e$-$06 & 89.2385\\
			(4,4,4,4,4,4) & 17 & 0.0001 & 2.0528e$-$06 & 89.2384\\
			(5,5,5,5,5,5) & 16 & 0.0018 & 4.3712e$-$08 & 89.2384\\
			\botrule
		\end{tabular}
	\end{table}

	\section{Concluding remarks}
	\label{sect7}
	
	In this paper, we have developed a globally convergent SQP-type method with the least constraint violation for nonlinear semidefinite programming. Our approach establishes a crucial connection between infeasible stationary points of the original problem and Fritz-John points of a shifted version of the problem. Leveraging a two-phase strategy, we compute a feasible direction $d_k^{fea}$ to enhance constraint feasibility through linearized constraints. The search direction $d_k$ is derived from a quadratic semidefinite programming problem. We have rigorously established the global convergence of our method to first-order optimal points with the least constraint violation.
	
	The empirical evaluation of Algorithm \ref{alg3.1} on problems exhibiting degeneracy showcases its efficacy in practice. It's important to note that while our method is primarily inspired by nonlinear programming, certain subproblem optimality conditions had to be adapted to accommodate the structure of semidefinite constraints. As a direction for future research, one could explore refining the rate of convergence towards infeasible stationary points by effectively utilizing second-order information. This extension could potentially enhance the efficiency of our approach and provide deeper insights into the theoretical properties of the method.
	
%

\section*{}
\textbf{Data availability}
We do not analyse or generate any datasets, because our work proceeds within a theoretical and mathematical approach.

\section*{Declarations}
\textbf{Conflicts of interest}
The authors declare that they have no conflict of interest.


\end{document}